\newcommand{\Ind}{1\!\mathrm{l}}
\theoremstyle{plain}
\newtheorem{theorem}{Theorem}[section]
\newtheorem{lemma}[theorem]{Lemma}
\newtheorem{proposition}[theorem]{Proposition}
\newtheorem{remark}[theorem]{Remark}
\theoremstyle{definition}
\newtheorem{definition}{Definition}[section]
\newtheorem{assumption}{Assumption}
\numberwithin{equation}{section}
\begin{document}
\begin{frontmatter}
\title{Supremum Norm Posterior Contraction and Credible Sets for Nonparametric Multivariate Regression}
\runtitle{Sup-norm Posterior Contraction and Credible Sets}

\begin{aug}
\author{\fnms{William Weimin} \snm{Yoo}\corref{}\ead[label=e1]{yooweimin0203@gmail.com}}
\address{W. W. Yoo\\Universit\'{e} Paris Dauphine\\CEREMADE\\Place du Mar\'{e}chal de Lattrie de Tassigny,\\75016 Paris, France\\\printead{e1}}
\and
\author{\fnms{Subhashis} \snm{Ghosal}\ead[label=e2]{sghosal@ncsu.edu}}
\address{S. Ghosal\\Department of Statistics\\North Carolina State University\\4276 SAS Hall, 2311 Stinson Drive\\Raleigh, North Carolina 27695-8203\\USA\\\printead{e2}}
\affiliation{Universit\'{e} Paris Dauphine and North Carolina State University}
\runauthor{W. W. Yoo and S. Ghosal}
\end{aug}

\begin{abstract}
In the setting of nonparametric multivariate regression with unknown error variance $\sigma^2$, we study asymptotic properties of a Bayesian method for  estimating a regression function $f$ and its mixed partial derivatives. We use a random series of tensor product of B-splines with normal basis coefficients as a prior for $f$, and $\sigma$ is either estimated using the empirical Bayes approach or is endowed with a suitable prior in a hierarchical Bayes approach. We establish pointwise, $L_2$ and $L_\infty$-posterior contraction rates for $f$ and its mixed partial derivatives, and show that they coincide with the minimax rates. Our results cover even the anisotropic situation, where the true regression function may have different smoothness in different directions. Using the convergence bounds, we show that pointwise, $L_2$ and $L_\infty$-credible sets for $f$ and its mixed partial derivatives have guaranteed frequentist coverage with optimal size. New results on tensor products of B-splines are also obtained in the course.
\end{abstract}

\begin{keyword}[class=MSC]
\kwd[Primary ]{62G08}
\kwd[; secondary ]{62G05}
\kwd{62G15}
\kwd{62G20}
\end{keyword}

\begin{keyword}
\kwd{Tensor product B-splines}
\kwd{sup-norm posterior contraction}
\kwd{nonparametric multivariate regression}
\kwd{mixed partial derivatives}
\kwd{anisotropic smoothness}
\end{keyword}
\end{frontmatter}

\section{Introduction}
Consider the nonparametric regression model
\begin{align}\label{eq:mainprob}
Y_i=f(\boldsymbol{X}_i)+\varepsilon_i,\quad i=1,\dotsc,n,
\end{align}
where $Y_i$ is a response variable, $\boldsymbol{X}_i$ is a $d$-dimensional covariate, and $\varepsilon_1,\ldots,\varepsilon_n$ are independent and identically distributed (i.i.d.) as $\mathrm{N}(0,\sigma^2)$ with unknown $0<\sigma<\infty$. The covariates are deterministic or are sampled from some fixed distribution independent of $\varepsilon_i$. In both cases, each $\boldsymbol{X}_i$ takes values in some rectangular region in $\mathbb{R}^d$, which is assumed to be $[0,1]^d$ without loss of generality. We follow the Bayesian approach by representing $f$ by a finite linear combination of tensor products of B-splines and endowing the coefficients with a multivariate normal prior. We consider both the empirical and the hierarchical Bayes approach for the variance $\sigma^2$. For the latter approach, a conjugate inverse-gamma prior is particularly convenient.

We study frequentist behavior of the posterior distributions and the resulting credible sets for $f$ and its mixed partial derivatives, in terms of pointwise, $L_2$ and $L_\infty$ (supremum) distances. We assume that the true regression function $f_0$ belongs to an anisotropic H\"{o}lder space (see Definition \ref{definition:aninorm} below), and the errors under the true distribution are sub-Gaussian.

Posterior contraction rates for regression functions in the $L_2$-norm are well studied, but results for the stronger $L_\infty$-norm are limited. Gin\'{e} and Nickl \citep{nickl2011} studied contraction rates in $L_r$-metric, $1\leq r\leq\infty$, and obtained optimal rate using conjugacy for the Gaussian white noise model and a rate for density estimation based on a random wavelet series and Dirichlet process mixture using a testing approach. In the same context, Castillo \citep{castillosupnorm} introduced techniques based on semiparametric Bernstein-von Misses (BvM) theorems to obtain optimal $L_\infty$-contraction rates. Hoffman et al. \citep{adaptsupnorm} derived adaptive optimal $L_\infty$-contraction rate for the white noise model and also for density estimation. Scricciolo \citep{scricciolo2014} applied the techniques of \citep{nickl2011} to obtain $L_\infty$-rates using Gaussian kernel mixtures prior for analytic true densities.

De Jonge and van Zanten \citep{dejonge2012} used finite random series based on tensor products of B-splines to construct a prior for nonparametric regression and derived adaptive $L_2$-contraction rate for the regression function in the isotropic case. A BvM theorem for the posterior of $\sigma$ is treated in \citep{dejonge2013}. Shen and Ghosal \cite{ShenandGhosal:RandomSeries, ShenandGhosal:DensityRegression} used tensor products of B-splines respectively for Bayesian multivariate density estimation and high dimensional density regression in the anisotropic case.

Nonparametric confidence bands for an unknown function were considered by  \citep{smirnov1950,bickel1973} and more recently by \citep{bootstrap2003,nickl2010,gaussbootstrap2013}. A Bayesian approaches the problem by constructing a credible set with a prescribed posterior probability. It is then natural to ask if the credible set has adequate frequentist coverage for large sample sizes. For parametric problems, a BvM theorem concludes that Bayesian and frequentist measures of uncertainly are nearly the same in large samples. However, for the infinite dimensional normal mean model (equivalently the Gaussian white noise model), \citep{cox1993,freedman1999} observed that for many true parameters in $\ell_2$, credible regions can have inadequate coverage. Leahu \citep{leahu2011} showed that if the prior variances are chosen very big so that the support of the prior extends beyond $\ell_2$, then coverage can be obtained. Knapik et al. \citep{inverseprob,inverseprob2013} showed that for sequences with specific smoothness, by deliberately undersmoothing the prior, coverage of credible sets may be guaranteed. Sniekers and van der Vaart \citep{credible} obtained similar results for nonparametric regression using a scaled Brownian motion prior.

Castillo and Nickl \citep{castillo2013} showed that for the Gaussian white noise model a BvM theorem can hold in weaker topologies for some natural priors, and the resulting credible sets appropriately modified will have asymptotically the correct coverage and optimal size. A similar result for the stronger $L_\infty$-norm using this weak notion of BvM theorem is considered in \citep{nickl2014}. Adaptive $L_2$-credible regions with adequate frequentist coverage are constructed using the empirical Bayes approach in \citep{botond2015} for the Gaussian white noise model and in \citep{smooth} for the nonparametric regression model using smoothing splines. In the setting of the Gaussian white noise model, Ray \citep{kolyan2015} constructed adaptive $L_2$-credible sets using a weak BvM theorem, and also adaptive $L_\infty$-credible band using a spike and slab prior.

In this paper we consider multivariate nonparametric regression with unknown variance parameter and study posterior contraction rates and coverage of credible sets in the pointwise, $L_2$- and $L_\infty$-senses, for the regression function $f$ as well as its mixed partial derivatives. Study of posterior contraction rate in $L_\infty$-norm is important for its natural interpretation and implications for other problems such as the convergence of the mode of a function. A $L_\infty$-credible band is easier to visualize than a $L_2$-credible set.
We assume that the smoothness of the function is given but allow anisotropy, so the smoothness level may vary with the direction. Anisotropic function has applications in estimating time-dependent spectral density of a locally stationary time series (see \citep{aniappli}), and variable selection (see \citep{hoffmann}).

A prior on the regression function is constructed using a finite random series of tensor products of B-splines with normally distributed coefficients. Posterior conjugacy leads to explicit expression for the posterior distribution which is convenient for computation as well as theoretical analysis.
Although wavelets are also widely used to construct random series priors, B-splines have the added advantage in that mixed partial derivatives of $f$ are expressible in terms of lower degree B-splines. This allows posterior analysis for mixed partial derivatives of $f$, a topic that is largely unaddressed in the literature, except implicitly as inverse problems in the Gaussian white noise model.

The paper is organized as follows. The next section introduces notations and assumptions. Section \ref{sec:tprior} describes the prior and the resulting posterior distribution. Section \ref{sec:tsupnormf} contains main results on pointwise and $L_\infty$-contraction rates of $f$ and its mixed partial derivatives. Section \ref{sec:cred} presents results on coverage of the corresponding credible sets. Section \ref{sec:sim} contains a simulation study of the proposed method. Proofs are in Section \ref{sec:proof}. New results on tensor products of B-splines are presented in the Appendix.

\section{Assumptions and preliminaries}
\label{sec:notation}

We describe notations and assumptions used in this paper. Given two numerical sequences $a_n$ and $b_n$, $a_n=O(b_n)$ or $a_n\lesssim b_n$ means $a_n/b_n$ is bounded, while $a_n=o(b_n)$ or $a_n\ll b_n$ means $a_n/b_n\rightarrow0$. Also, $a_n\asymp b_n$ means $a_n=O(b_n)$ and $b_n=O(a_n)$. For stochastic sequence $Z_n$, $Z_n=O_{\mathrm{P}}(a_n)$ means $\mathrm{P}(|Z_n|\leq Ca_n)\rightarrow1$ for some constant $C>0$. Let $\mathbb{N}=\{1,2,\dotsc\}$ and $\mathbb{N}_0=\mathbb{N}\cup\{0\}$.

Define $\|\boldsymbol{x}\|_p=(\sum_{k=1}^d |x_k|^p)^{1/p}$, $1\le p<\infty$, $\|\boldsymbol{x}\|_\infty=\max_{1\leq k\leq d}|x_k|$, and write $\|\boldsymbol{x}\|$ for $\|\boldsymbol{x}\|_2$, the Euclidean norm. We write $\boldsymbol{x}\leq\boldsymbol{y}$ if $x_k\leq y_k, k=1,\dotsc,d$. For an $m\times m$ matrix $\boldsymbol{A}= (\!( a_{ij})\!)$, let $\lambda_\mathrm{min}(\boldsymbol{A})$ and $\lambda_\mathrm{max}(\boldsymbol{A})$ be the smallest and largest eigenvalues, and the $(r,s)$ matrix norm of $\boldsymbol{A}$ as $\|\boldsymbol{A}\|_{(r,s)}=\sup\{\|\boldsymbol{Ax}\|_s:\|\boldsymbol{x}\|_r\leq1\}$. In particular, $\|\boldsymbol{A}\|_{(2,2)}=|\lambda_\text{max}(\boldsymbol{A})|$ and $\|\boldsymbol{A}\|_{(\infty,\infty)}=\max_{1\leq i\leq m}\sum_{j=1}^m|a_{ij}|$. These norms are related by $|a_{ij}|\leq\|\boldsymbol{A}\|_{(2,2)}\leq\|\boldsymbol{A}\|_{(\infty,\infty)}$ for $1\leq i,j\leq m$. With another matrix $\boldsymbol{B}$ of the same size, $\boldsymbol{A}\leq\boldsymbol{B}$ means $\boldsymbol{B}-\boldsymbol{A}$ is non-negative definite. We denote by $\boldsymbol{I}_m$ the $m\times m$ identity matrix and by $\boldsymbol{1}_d$ the $d\times1$ vector of ones.

For $f:U\rightarrow\mathbb{R}$ on some bounded set $U\subseteq\mathbb{R}^d$ with interior points, let $\|f\|_p$ be the $L_p$-norm, and $\|f\|_\infty=\sup_{x\in U}|f(x)|$. For $\boldsymbol{r}=(r_1,\dotsc,r_d)^T\in\mathbb{N}_0^d$, let $D^{\boldsymbol{r}}$ be the partial derivative operator $\partial^{|\boldsymbol{r}|}/\partial x_1^{r_1}\dotsm\partial x_d^{r_d}$, where $|\boldsymbol{r}|=\sum_{k=1}^dr_k$. If $\boldsymbol{r}=\boldsymbol{0}$, we interpret $D^{\boldsymbol{0}}f\equiv f$. We say $\boldsymbol{Z}\sim\mathrm{N}_J(\boldsymbol{\xi},\boldsymbol{\Omega})$ if $\boldsymbol{Z}$ has a $J$-dimensional normal distribution with mean vector $\boldsymbol{\xi}$ and covariance matrix $\boldsymbol{\Omega}$. For a random function $\{Z(t),t\in U\}$, write $Z\sim\mathrm{GP}(\xi,\Omega)$ if $Z$ is a Gaussian process with $\mathrm{E}Z(t)=\xi(t)$ and $\mathrm{Cov}(Z(s),Z(t))=\Omega(s,t)$.

\begin{definition}\label{definition:aninorm}
The anisotropic H\"{o}lder space $\mathcal{H}^{\boldsymbol{\alpha}}([0,1]^d)$ of order $\boldsymbol{\alpha}=(\alpha_1,\dotsc,\alpha_d)^T$ consists of functions $f:[0,1]^d\rightarrow\mathbb{R}$ such that $\|f\|_{\boldsymbol{\alpha},\infty}<\infty$, where $\|\cdot\|_{\boldsymbol{\alpha},\infty}$ is the anisotropic H\"{o}lder norm
\begin{align}\label{eq:aninorm}
\max\left\{\|D^{\boldsymbol{r}}f\|_\infty+\sum_{k=1}^d\left\|D^{(\alpha_k-r_k)\boldsymbol{e}_k}D^{\boldsymbol{r}}f\right\|_\infty:\; \boldsymbol{r}\in\mathbb{N}_0^d,\,  \sum_{k=1}^dr_k/\alpha_k<1\right\}
\end{align}
and $\boldsymbol{e}_k \in\mathbb{R}^d$ has $1$ in the $k$th position and zero elsewhere.
\end{definition}

Let $\alpha^{*}$ be the harmonic mean of $(\alpha_1,\dotsc,\alpha_d)^T$, i.e., $\alpha^{*-1}=d^{-1}\sum_{k=1}^d\alpha_k^{-1}$. For $\boldsymbol{x}=(x_1,\dotsc,x_d)^T$, we define $\boldsymbol{b}_{\boldsymbol{J},\boldsymbol{q}}(\boldsymbol{x})=(B_{j_1,q_1}(x_1)\dotsm B_{j_d,q_d}(x_d),1\leq j_k\leq J_k, k=1,\dotsc,d$) to be a collection of $J=\prod_{k=1}^dJ_k$ tensor-product of B-splines, where $B_{j_k,q_k}(x_k)$ is the $k$th component B-spline of fixed order $q_k\geq\alpha_k$, with knot sequence $0=t_{k,0}<t_{k,1}<\dotsb<t_{k,N_k}<t_{k,N_{k+1}}=1$, and let $J_k=q_k+N_k$ and $\boldsymbol{J}=(J_1,\ldots,J_d)^T$. In the prior construction the knots depend on $n$ and $N_k$ increases to infinity with $n$ subject to  $\prod_{k=1}^dJ_k\leq n$. At each $k=1,\dotsc,d$, define $\delta_{k,l}=t_{k,l}-t_{k,l-1}$ to be the one-step knot increment, and let $\Delta_k=\max_{1\leq l\leq N_k}\delta_{k,l}$ be the mesh size. We assume that the knot sequence for each direction is quasi-uniform [Definition 6.4 of \citep{lschumaker}], that is $\Delta_k/\min_{1\leq l\leq N_k}\delta_{k,l}\leq C$, for some $C>0$. This assumption is satisfied for the uniform and nested uniform partitions as special cases (Examples 6.6 and 6.7 of \citep{lschumaker}) and we can choose a subset of knots from any given knot sequence to form a quasi-uniform sequence with $C=3$ [Lemma 6.17 of \citep{lschumaker}].

If the design points $\boldsymbol{X}_i=(X_{i1},\dotsc,X_{id})^T$ for $i=1,\dotsc,n$, are fixed, assume that there exists a cumulative distribution function $G(\boldsymbol{x})$, with positive and continuous density on $[0,1]^d$ such that
\begin{equation}\label{assump:tbspline2}
\sup_{\boldsymbol{x}\in[0,1]^d}|G_n(\boldsymbol{x})-G(\boldsymbol{x})|=o\left(\prod_{k=1}^dN_k^{-1}\right),
\end{equation}
where $G_n(\boldsymbol{x})=n^{-1}\sum_{i=1}^n
 \Ind_{\prod_{k=1}^d[0,X_{ik}]}(\boldsymbol{x})$ is the empirical distribution of
  $\{\boldsymbol{X}_i,i=1,\ldots,n\}$, with $\Ind_U(\cdot)$ the indicator function on $U$.

\begin{remark}\rm
For example, let $n=m^d$ for some $m\in\mathbb{N}$, the discrete uniform design $\boldsymbol{X}_i\in\{(j-1)/(m-1):j=1,\dotsc,m\}^d$ with $i=1,\dotsc,n$, satisfies \eqref{assump:tbspline2} with $G$ being the uniform distribution on $[0,1]^d$ and $N_k\lesssim n^{\alpha^{*}/\{\alpha_k(2\alpha^{*}+d)\}}$ for $k=1,\dotsc,d$.
\end{remark}

For random design points, assume $\boldsymbol{X}_i\stackrel{\mathrm{i.i.d.}}{\sim}G$ with a continuous density on $[0,1]^d$, then \eqref{assump:tbspline2} holds with probability tending to one if $N_k\lesssim n^{\alpha^{*}/\{\alpha_k(2\alpha^{*}+d)\}}$ for $k=1,\dotsc,d$, and $\alpha^{*}>d/2$ by Donsker's theorem. In this paper, we shall prove results on posterior contraction rates and credible sets based on fixed design points; the random case can be treated by conditioning on $\boldsymbol{X}_i,i=1,\dotsc,n$.

Let $\boldsymbol{B}=(\boldsymbol{b}_{\boldsymbol{J},\boldsymbol{q}}(\boldsymbol{X}_1),\dotsc,\boldsymbol{b}_{\boldsymbol{J},\boldsymbol{q}}(\boldsymbol{X}_n))^T$. Each entry of $\boldsymbol{B}^T\boldsymbol{B}$ is indexed by $d$-dimensional multi-indices, i.e., for $\boldsymbol{u}=(u_1,\dotsc,u_d)^T$ and $\boldsymbol{v}=(v_1,\dotsc,v_d)^T$ with $1\leq u_k,v_k\leq J_k, k=1,\dotsc,d$, the $(\boldsymbol{u},\boldsymbol{v})$th entry is  $(\boldsymbol{B}^T\boldsymbol{B})_{\boldsymbol{u},\boldsymbol{v}}=\sum_{i=1}^n\prod_{k=1}^dB_{u_k,q_k}(X_{ik})B_{v_k,q_k}(X_{ik})$. The following generalization of matrix banding property will be useful.

\begin{definition}\label{def:band}
Let $\boldsymbol{A}=(\!(a_{\boldsymbol{u},\boldsymbol{v}})\!)$ be a matrix with rows and columns indexed by $d$-dimensional multi-indices $\boldsymbol{1}_d\leq\boldsymbol{u},\boldsymbol{v}\leq\boldsymbol{J}$ respectively, where arrangement of the elements are arbitrary. We say that $\boldsymbol{A}$ is $\boldsymbol{h}=(h_1,\dotsc,h_d)^T$ banded if $a_{\boldsymbol{u},\boldsymbol{v}}=0$ whenever $|u_k-v_k|>h_k$ for some $k=1,\dotsc,d$.
\end{definition}

Given $\boldsymbol{X}_i=(X_{i1},\dotsc,X_{id})^T$ for $i=1,\dotsc,n$, such that $X_{ik}\in[t_{k,l-1},t_{k,l}]$, only $q_k$ adjacent basis functions $(B_{l,q_k}(X_{ik}),\dotsc,B_{l+q_k-1,q_k}(X_{ik}))^T$ will be nonzero for $k=1,\dotsc,d$. Hence if $|u_m-v_m|>q_m$ for some $m=1,\dotsc,d$, then $B_{u_m,q_m}(X_{im})B_{v_m,q_m}(X_{im})=0$, and we conclude $(\boldsymbol{B}^T\boldsymbol{B})_{\boldsymbol{u},\boldsymbol{v}}=0$. It then follows that $\boldsymbol{B}^T\boldsymbol{B}$ is $\boldsymbol{q}=(q_1,\dotsc,q_d)^T$-banded.

Since approximation results for anisotropic functions by linear combinations of tensor-products of B-splines assume integer smoothness (see Chapter 12, Section 3 of \citep{lschumaker}), we assume that $\boldsymbol{\alpha}\in\mathbb{N}^d$. For the isotropic case, the norm in \eqref{eq:aninorm} can be generalized (see Section 2.7.1 of \citep{empirical}) and the approximation rate is obtained for all smoothness levels (Theorem 22 of Chapter XII in \citep{deBoor}). This allows generalization of posterior contraction results for arbitrary smoothness levels. We now describe the assumption on $f_0$ used in this paper.

\begin{assumption}\label{assump:tf0}
Under the true distribution $P_0$, $Y_i=f_0(\boldsymbol{X}_i)+\varepsilon_i$, such that $\varepsilon_i$ are i.i.d. sub-Gaussian with mean $0$ and variance $\sigma_0^2$ for $i=1,\dotsc,n$. Also, $f_0\in\mathcal{H}^{\boldsymbol{\alpha}}([0,1]^d)$ with order $\boldsymbol{\alpha}=(\alpha_1,\dotsc,\alpha_d)^T\in\mathbb{N}^d$. If the design points are deterministic, we assume that \eqref{assump:tbspline2}  holds. If the design points are random, we assume that $\alpha^*>d/2$.
\end{assumption}

Let $\mathrm{E}_0(\cdot)$ and $\mathrm{Var}_0(\cdot)$ be the expectation and variance operators taken with respect to $P_0$. We write $\boldsymbol{Y}=(Y_1,\dotsc,Y_n)^T$, $\boldsymbol{X}=(\boldsymbol{X}_1^T,\dotsc,\boldsymbol{X}_n^T)^T$, $\boldsymbol{F}_0=(f_0(\boldsymbol{X}_1),\dotsc,f_0(\boldsymbol{X}_n))^T$ and $\boldsymbol{\varepsilon}=(\varepsilon_1,\dotsc,\varepsilon_n)^T$.

\section{Prior and posterior conjugacy}\label{sec:tprior}

We induce a prior on $f$ by representing it as a tensor-product B-splines series, i.e., $f(\boldsymbol{x})=\boldsymbol{b}_{\boldsymbol{J},\boldsymbol{q}}(\boldsymbol{x})^T\boldsymbol{\theta}$, where $\boldsymbol{\theta}=\{\theta_{j_1,\dotsc,j_d}:1\leq j_k\leq J_k,k=1,\dotsc,d\}$ are the basis coefficients. Then its $\boldsymbol{r}=(r_1,\dotsc,r_d)^T$ mixed partial derivative is
\begin{equation*}
D^{\boldsymbol{r}}f(\boldsymbol{x})=\sum_{j_1=1}^{J_1}\dotsi\sum_{j_d=1}^{J_d}\theta_{j_1,\dotsc,j_d}\prod_{k=1}^d\frac{\partial^{r_k}}{\partial x_k^{r_k}}B_{j_k,q_k}(x_k).
\end{equation*}
Define an operator $\mathfrak{D}_{j_k}^{r_k}$ acting on $\theta_{j_1,\dotsc,j_d}$ such that $\mathfrak{D}_{j_k}^{0}\theta_{j_1,\dotsc,j_d}=\theta_{j_1,\dotsc,j_d}$, and for $r_k\geq1$,
\begin{equation}\label{eq:firstdiff}
\mathfrak{D}_{j_k}^{r_k}\theta_{j_1,\dotsc,j_d}=\frac{\mathfrak{D}_{j_k}^{r_k-1}\theta_{j_1,\dotsc,j_{k-1},j_k+1,j_{k+1},\dotsc,j_d}-\mathfrak{D}_{j_k}^{r_k-1}\theta_{j_1,\dotsc,j_{k-1},j_k,j_{k+1},\dotsc,j_d}}{(t_{k,j_k}-t_{k,j_k-q_k+1})/(q_k-r_k)}.
\end{equation}
Furthermore, let $\mathfrak{D}^{\boldsymbol{r}}\theta_{j_1,\dotsc,j_d}=\mathfrak{D}^{r_1}_{j_1}\dotsm\mathfrak{D}^{r_d}_{j_d}\theta_{j_1,\dotsc,j_d}$ be the application of $\mathfrak{D}^{r_k}_{j_k}$ to $\theta_{j_1,\dotsc,j_d}$ for all direction $k=1,\dotsc,d$. Using equations (15) and (16) of Chapter X from \citep{deBoor}, $D^{\boldsymbol{r}}f(\boldsymbol{x})$ can be written as
\begin{equation}\label{eq:derivspline}
\sum_{j_1=1}^{J_1-r_1}\dotsi\sum_{j_d=1}^{J_d-r_d}\mathfrak{D}^{\boldsymbol{r}}\theta_{j_1,\dotsc,j_d}\prod_{k=1}^dB_{j_k,q_k-r_k}(x_k)
=\boldsymbol{b}_{\boldsymbol{J},\boldsymbol{q}-\boldsymbol{r}}(\boldsymbol{x})^T\boldsymbol{W}_{\boldsymbol{r}}\boldsymbol{\theta},
\end{equation}
where $\boldsymbol{W}_{\boldsymbol{r}}$ is a $\prod_{k=1}^d(J_k-r_k)\times\prod_{k=1}^dJ_k$ matrix, with entries given by \eqref{eq:tendpoints}--\eqref{eq:twijmiddle} in Lemma \ref{lem:Wr}. These entries are coefficients associated with applying the weighted finite differencing operator of \eqref{eq:firstdiff} iteratively on $\boldsymbol{\theta}$ in all directions.

We represent the model in \eqref{eq:mainprob} by $\boldsymbol{Y}|(\boldsymbol{X},\boldsymbol{\theta},\sigma)\sim\mathrm{N}_n(\boldsymbol{B\theta},\sigma^2\boldsymbol{I}_n)$. In this paper, we treat $\boldsymbol{J}=(J_1,\dotsc,J_d)^T$ as deterministic and allow it to depend on $n, d$ and $\boldsymbol{\alpha}$. On the basis coefficients, we assign $\boldsymbol{\theta}|\sigma\sim\mathrm{N}_J(\boldsymbol{\eta},\sigma^2\boldsymbol{\Omega})$, where $\|\boldsymbol{\eta}\|_\infty$ is uniformly bounded. The entries of $\boldsymbol{\Omega}$ do not depend on $n$, and are indexed using $d$-dimensional multi-indices described above. We further assume that $\boldsymbol{\Omega}^{-1}$ is a $\boldsymbol{m}=(m_1,\dotsc,m_d)^T$ banded matrix with fixed $\boldsymbol{m}$. Note that $\boldsymbol{\Omega}$ depends on $n$ only through its dimension $J\times J$. Furthermore, as $n\rightarrow\infty$, we assume that there exists constants $0<c_1\leq c_2<\infty$ such that
\begin{equation}\label{assump:tprior}
c_1\boldsymbol{I}_J\leq\boldsymbol{\Omega}\leq c_2\boldsymbol{I}_J.
\end{equation}
It follows that $D^{\boldsymbol{r}}f|(\boldsymbol{Y},\sigma)\sim\mathrm{GP}(\boldsymbol{A}_{\boldsymbol{r}}\boldsymbol{Y}+\boldsymbol{c}_{\boldsymbol{r}}\boldsymbol{\eta},\sigma^2\Sigma_{\boldsymbol{r}})$, where $\boldsymbol{A}_{\boldsymbol{r}}$,  $\boldsymbol{c}_{\boldsymbol{r}}$ and the covariance kernel are defined for  $\boldsymbol{x},\boldsymbol{y}\in[0,1]^d$ by
\begin{align}
\boldsymbol{A}_{\boldsymbol{r}}(\boldsymbol{x})&=\boldsymbol{b}_{\boldsymbol{J},\boldsymbol{q}-\boldsymbol{r}}(\boldsymbol{x})^T\boldsymbol{W}_{\boldsymbol{r}}\left(\boldsymbol{B}^T\boldsymbol{B}+\boldsymbol{\Omega}^{-1}\right)^{-1}\boldsymbol{B}^T,\label{eq:tAr}\\
\boldsymbol{c}_{\boldsymbol{r}}(\boldsymbol{x})&=\boldsymbol{b}_{\boldsymbol{J},\boldsymbol{q}-\boldsymbol{r}}(\boldsymbol{x})^T\boldsymbol{W}_{\boldsymbol{r}}\left(\boldsymbol{B}^T\boldsymbol{B}+\boldsymbol{\Omega}^{-1}\right)^{-1}\boldsymbol{\Omega}^{-1},\label{eq:tcr}\\
\Sigma_{\boldsymbol{r}}(\boldsymbol{x},\boldsymbol{y})&=\boldsymbol{b}_{\boldsymbol{J},\boldsymbol{q}-\boldsymbol{r}}(\boldsymbol{x})^T\boldsymbol{W}_{\boldsymbol{r}}
\left(\boldsymbol{B}^T\boldsymbol{B}+\boldsymbol{\Omega}^{-1}\right)^{-1}\boldsymbol{W}_{\boldsymbol{r}}^T\boldsymbol{b}_{\boldsymbol{J},\boldsymbol{q}-\boldsymbol{r}}
(\boldsymbol{y}).\label{eq:tsigmar}
\end{align}
Since the posterior mean of $D^{\boldsymbol{r}}f$ is an affine transformation of $\boldsymbol{Y}$, Assumption \ref{assump:tf0} implies that  $\boldsymbol{A}_{\boldsymbol{r}}\boldsymbol{Y}+\boldsymbol{c}_{\boldsymbol{r}}\boldsymbol{\eta}$ is a sub-Gaussian process under $P_0$. If $\boldsymbol{r}=\boldsymbol{0}$,  defining $\boldsymbol{W}_{\boldsymbol{0}}=\boldsymbol{I}_J$, we obtain the conditional posterior distribution of $f$.

To deal with $\sigma$, observe that $\boldsymbol{Y}|\sigma\sim\mathrm{N}_n[\boldsymbol{B\eta},\sigma^2(\boldsymbol{B\Omega B}^T+\boldsymbol{I}_n)]$. Maximizing the corresponding log-likelihood with respect to $\sigma$ leads to
\begin{align}\label{eq:tsigma2hat}
\widehat{\sigma}_n^2&=n^{-1}{(\boldsymbol{Y}-\boldsymbol{B\eta})^T(\boldsymbol{B\Omega B}^T+\boldsymbol{I}_n)^{-1}(\boldsymbol{Y}-\boldsymbol{B\eta})}.
\end{align}
Empirical Bayes then entails substituting the maximum likelihood estimator $\widehat{\sigma}_n$ for $\sigma$ in the conditional posterior of $D^{\boldsymbol{r}}f$, i.e.,
\begin{align}
\Pi(D^{\boldsymbol{r}}f|\boldsymbol{Y},\sigma)|_{\sigma=\widehat{\sigma}_n}=\Pi_{\widehat{\sigma}_n}(D^{\boldsymbol{r}}f|\boldsymbol{Y})\sim \mathrm{GP}(\boldsymbol{A}_{\boldsymbol{r}}\boldsymbol{Y}+\boldsymbol{c}_{\boldsymbol{r}}\boldsymbol{\eta},\widehat{\sigma}_n^2\Sigma_{\boldsymbol{r}}).
\end{align}
In a hierarchical Bayes approach, we further endow $\sigma$ with a continuous and positive prior density. A conjugate inverse-gamma (IG) prior $\sigma^2\sim\mathrm{IG}(\beta_1/2,\beta_2/2)$, with hyperparameters $\beta_1>4$ and $\beta_2>0$ is particularly convenient for both computation and theoretical analysis since  by direct calculations, the posterior of $\sigma^2$ is
\begin{align}\label{eq:tgamma}
\sigma^2|\boldsymbol{Y}\sim\mathrm{IG}((\beta_1+n)/{2},(\beta_2+n\widehat{\sigma}_n^2)/{2}).
\end{align}

Under the quasi-uniformity of the knots and \eqref{assump:tbspline2}, Lemma \ref{lem:matrix} concludes that there exist constants $0<C_1\leq C_2<\infty$ such that
\begin{equation}\label{eq:tdiagonal}
C_1n\left(\prod_{k=1}^dJ_k^{-1}\right)\boldsymbol{I}_J\leq\boldsymbol{B}^T\boldsymbol{B}\leq C_2n\left(\prod_{k=1}^dJ_k^{-1}\right)\boldsymbol{I}_J.
\end{equation}
In particular, $\|\boldsymbol{B}^T\boldsymbol{B}\|_{(2,2)}\asymp n\prod_{k=1}^dJ_k^{-1}$. Combining the above with \eqref{assump:tprior},
\begin{align}
\left(C_1n\prod_{k=1}^dJ_k^{-1}+{c_2}^{-1}\right)&
\leq\lambda_{\mathrm{min}}\left(\boldsymbol{B}^T\boldsymbol{B}+\boldsymbol{\Omega}^{-1}\right)\nonumber\\
&\leq\lambda_{\mathrm{max}}\left(\boldsymbol{B}^T\boldsymbol{B}+\boldsymbol{\Omega}^{-1}\right)\leq
\left(C_2n\prod_{k=1}^dJ_k^{-1}+{c_1}^{-1}\right).\label{eq:teigen}
\end{align}

\section{Posterior contraction rates}\label{sec:tsupnormf}

To establish posterior contraction rates for $f$ and its mixed partial derivatives with unknown $\sigma$, a key step is showing that the empirical Bayes estimator for $\sigma$ in the empirical Bayes approach or the posterior distribution of $\sigma$ in the hierarchical Bayes approach, are consistent, uniformly for the true regression function $f_0$ satisfying $\|f_0\|_{\boldsymbol{\alpha},\infty}\leq R$ for any given $R>0$.

\begin{proposition}\label{th:tsigma2con}
Let $J_k\asymp n^{\alpha^{*}/\{\alpha_k(2\alpha^{*}+d)\}}$, $k=1,\dotsc,d$. Then for any $R>0$,
the following assertions holds uniformly for the true regression $f_0$ satisfying
$\|f_0\|_{\boldsymbol{\alpha},\infty}\leq R$:
\begin{enumerate}
\item [(a)] the empirical Bayes estimator $\widehat{\sigma}_n$ converges to the true $\sigma_0$ at the rate
$\max(n^{-1/2}, n^{-2\alpha^*/(2\alpha^*+d)})$;
\item [(b)] if the inverse gamma prior $\mathrm{IG}(\beta_1/2,\beta_2/2)$ is used on $\sigma^2$, then the posterior for $\sigma$ contracts at $\sigma_0$ at the same rate;
\item [(c)] if the true distribution of the regression errors $\varepsilon_1,\ldots,\varepsilon_n$ is Gaussian, then for any prior on $\sigma$ with  positive and continuous density, the posterior distribution of $\sigma$ is consistent.
\end{enumerate}
\end{proposition}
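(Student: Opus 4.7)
The plan is to decompose $\widehat\sigma_n^2$ against the truth using a best tensor-product spline approximant of $f_0$, control the resulting deterministic and stochastic pieces, and then translate the rate for $\widehat\sigma_n^2$ into statements about $\widehat\sigma_n$ and the two posterior distributions of $\sigma$. Set $M := (\boldsymbol{B}\boldsymbol{\Omega}\boldsymbol{B}^T+\boldsymbol{I}_n)^{-1}$; by the Woodbury identity $M=\boldsymbol{I}_n-\boldsymbol{B}(\boldsymbol{B}^T\boldsymbol{B}+\boldsymbol{\Omega}^{-1})^{-1}\boldsymbol{B}^T$, so $0\preceq M\preceq \boldsymbol{I}_n$. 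By the anisotropic B-spline approximation result of Schumaker (Chapter 12.3), there exists $\boldsymbol{\theta}^\ast\in\mathbb{R}^J$ with $\|f_0-\boldsymbol{b}_{\boldsymbol{J},\boldsymbol{q}}^T\boldsymbol{\theta}^\ast\|_\infty\lesssim R\sum_{k=1}^d J_k^{-\alpha_k}\lesssim R\,n^{-\alpha^\ast/(2\alpha^\ast+d)}$, whose coefficients stay bounded in $\ell_\infty$ by spline stability; writing $\boldsymbol{Y}-\boldsymbol{B}\boldsymbol{\eta}=(\boldsymbol{F}_0-\boldsymbol{B}\boldsymbol{\theta}^\ast)+\boldsymbol{B}(\boldsymbol{\theta}^\ast-\boldsymbol{\eta})+\boldsymbol{\varepsilon}$ turns (3.4) into three quadratic forms and three cross terms.

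Next I would bound each piece. The deterministic quadratic $(\boldsymbol{F}_0-\boldsymbol{B}\boldsymbol{\theta}^\ast)^T M(\boldsymbol{F}_0-\boldsymbol{B}\boldsymbol{\theta}^\ast)\le n\|f_0-\boldsymbol{b}^T\boldsymbol{\theta}^\ast\|_\infty^2 = O(n^{1-2\alpha^\ast/(2\alpha^\ast+d)})$. The term $(\boldsymbol{\theta}^\ast-\boldsymbol{\eta})^T\boldsymbol{B}^T M\boldsymbol{B}(\boldsymbol{\theta}^\ast-\boldsymbol{\eta})$ is handled via $\boldsymbol{B}^T M\boldsymbol{B}=\boldsymbol{B}^T\boldsymbol{B}(\boldsymbol{B}^T\boldsymbol{B}+\boldsymbol{\Omega}^{-1})^{-1}\boldsymbol{\Omega}^{-1}$, whose operator norm is $O(1)$ by (2.7) and (3.9), times $\|\boldsymbol{\theta}^\ast-\boldsymbol{\eta}\|^2=O(J)=O(n^{d/(2\alpha^\ast+d)})$. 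For the stochastic quadratic, the identity $n-\mathrm{tr}(M)=J-\mathrm{tr}(\boldsymbol{\Omega}^{-1}(\boldsymbol{B}^T\boldsymbol{B}+\boldsymbol{\Omega}^{-1})^{-1})$ combined with (3.9) gives $\mathrm{tr}(M)=n-J+O(J^2/n)$, so $\mathrm{E}_0[\boldsymbol{\varepsilon}^T M\boldsymbol{\varepsilon}]=\sigma_0^2 n - \sigma_0^2 J + O(J^2/n)$; a Hanson--Wright-type inequality for sub-Gaussian vectors yields $|\boldsymbol{\varepsilon}^T M\boldsymbol{\varepsilon}-\sigma_0^2\mathrm{tr}(M)|=O_P(\sqrt{\mathrm{tr}(M^2)})=O_P(\sqrt{n})$. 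The three cross terms are lower order either by Cauchy--Schwarz against the quadratic bounds or, when mean-zero, by variances dominated by the quadratic bounds. Summing and dividing by $n$ gives $\widehat\sigma_n^2-\sigma_0^2 = O_P\bigl(\max\{n^{-1/2},\,n^{-2\alpha^\ast/(2\alpha^\ast+d)}\}\bigr)$ uniformly over $\|f_0\|_{\boldsymbol{\alpha},\infty}\le R$, since every constant depends on $f_0$ only through $R$; the delta method then yields (a).

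For (b), the posterior (3.5) is $\mathrm{IG}((\beta_1+n)/2,(\beta_2+n\widehat\sigma_n^2)/2)$ with conditional mean $\widehat\sigma_n^2+O(1/n)$ and variance $O(1/n)$, so Chebyshev applied conditionally on $\boldsymbol{Y}$ shows the posterior concentrates within $O_P(n^{-1/2})$ of $\widehat\sigma_n^2$; combined with (a) this gives the advertised rate. For (c), after integrating out $\boldsymbol{\theta}$ the marginal model is $\boldsymbol{Y}\mid\sigma\sim\mathrm{N}_n(\boldsymbol{B}\boldsymbol{\eta},\sigma^2(\boldsymbol{B}\boldsymbol{\Omega}\boldsymbol{B}^T+\boldsymbol{I}_n))$, a one-parameter exponential family in $\sigma^2$ with profile log-likelihood $-\tfrac{n}{2}\log\sigma^2-\tfrac{n\widehat\sigma_n^2}{2\sigma^2}$ (up to $\sigma$-free constants), uniquely maximized at $\widehat\sigma_n^2$ with Fisher information of order $n$. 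A standard Laplace/Schwartz argument, invoking positivity and continuity of the prior density at $\sigma_0$ together with the consistency of $\widehat\sigma_n$ from (a), gives posterior consistency at $\sigma_0$.

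The main obstacle is the $\sqrt{n}$-concentration of $\boldsymbol{\varepsilon}^T M\boldsymbol{\varepsilon}$ under merely sub-Gaussian errors: a Hanson--Wright type tail bound with constants depending only on the sub-Gaussian parameter (not on $n$ or $f_0$) is needed, and the trace $\mathrm{tr}(M)$ must be tracked to the next order $J+J^2/n$, since it is precisely the leading $n$ that cancels $\sigma_0^2 n$ in $\mathrm{E}_0[\boldsymbol{\varepsilon}^T M\boldsymbol{\varepsilon}]$. Uniformity over $\|f_0\|_{\boldsymbol{\alpha},\infty}\le R$ is then automatic because the spline approximation error and the coefficient bound both depend on $f_0$ only through that norm.
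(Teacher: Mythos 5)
Your parts (a) and (b) follow essentially the same route as the paper: the same Woodbury decomposition of $\boldsymbol{U}=(\boldsymbol{B\Omega B}^T+\boldsymbol{I}_n)^{-1}$, the same splitting of $\boldsymbol{Y}-\boldsymbol{B\eta}$ through the best tensor-product spline approximant $\boldsymbol{\theta}_\infty$, the same three bounds $J/n$, $\sum_k J_k^{-2\alpha_k}$ and $J/n$ for the bias pieces, and concentration of $\boldsymbol{\varepsilon}^T\boldsymbol{U\varepsilon}$ at scale $\sqrt{n}$; the only differences are cosmetic (you invoke Hanson--Wright where the paper computes $\mathrm{Var}_0(\boldsymbol{\varepsilon}^T\boldsymbol{U\varepsilon})=O(n)$ directly from the fourth moment in Lemma \ref{lem:tvarquad} and applies Chebyshev, and you track $\mathrm{tr}(\boldsymbol{U})=n-J+O(J^2/n)$ to second order where the paper only needs the one-sided bound $\mathrm{tr}(\boldsymbol{I}_n-\boldsymbol{U})\le J$). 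Part (b) is identical. For part (c) your route is genuinely different and, I think, cleaner: you observe that the marginal posterior of $\sigma$ is proportional to $\pi(\sigma)\sigma^{-n}\exp(-n\widehat{\sigma}_n^2/(2\sigma^2))$, i.e.\ a deterministic functional of $\widehat{\sigma}_n^2$ alone, so that a Laplace-type estimate on the exactly known (not merely ``profile'') marginal log-likelihood, combined with consistency of $\widehat{\sigma}_n$ from (a) and positivity/continuity of the prior at $\sigma_0$, gives posterior consistency. The paper instead either cites an anisotropic extension of \citep{dejonge2013} or sketches a Schwartz-type argument requiring a lower bound on $e^{cn}\int(p_{n,\sigma}/p_{0,n})\,d\Pi(\sigma)$, where the true density $p_{0,n}$ enters and Gaussianity of the true errors is used. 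In your version $p_{0,n}$ cancels in the posterior ratio, so your argument does not actually use the Gaussianity hypothesis of (c) --- it yields the (stronger) conclusion under the sub-Gaussian assumption alone. That is not an error, but it is worth flagging that you are proving more than is asserted, and you should make explicit the elementary Laplace step: for $|v-\widehat{\sigma}_n^2|>\epsilon$ one has $-\tfrac{n}{2}\log v-\tfrac{n\widehat{\sigma}_n^2}{2v}\le -\tfrac{n}{2}\log\widehat{\sigma}_n^2-\tfrac{n}{2}-\tfrac{n}{2}c(\epsilon)$ with $c(\epsilon)>0$ uniformly over $\widehat{\sigma}_n^2$ in a fixed neighborhood of $\sigma_0^2>0$, while the normalizing integral over $|v-\widehat{\sigma}_n^2|\le n^{-1/2}$ loses only a factor $O(\sqrt{n})$; this is what makes ``standard Laplace/Schwartz argument'' rigorous.
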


For the rest of the paper, we shall treat $f$ and its mixed partial derivatives in a unified framework by viewing $f$ as $D^{\boldsymbol{0}}f$. Then the results on posterior contraction and credible sets (Section \ref{sec:cred}) for $f$ can be recovered by setting $\boldsymbol{r}=\boldsymbol{0}$. Since an explicit expression for the conditional posterior of $D^{\boldsymbol{r}}f$ given $\sigma$ is available due to the normal-normal conjugacy, we derive contraction rates by directly bounding posterior probabilities of deviations from the truth uniformly for $\sigma$ in a shrinking neighborhood of $\sigma_0$, which suffices in view of the consistency of the empirical Bayes estimator or that of the posterior distribution of $\sigma$. A decomposition of the posterior mean square error into posterior variance, variance and squared bias of the posterior mean is used for pointwise contraction, and uniformized using maximal inequalities to establish contraction with respect to the supremum distance. Contraction rates below are uniform in $\|f_0\|_{\boldsymbol{\alpha},\infty}\leq R$. We write $\epsilon_{n,\boldsymbol{r}}=n^{-\alpha^{*}\{1-\sum_{k=1}^d(r_k/\alpha_k)\}/(2\alpha^{*}+d)}$ and $\epsilon_{n,\boldsymbol{r},\infty}=(\log{n}/n)^{\alpha^{*}\{1-\sum_{k=1}^d(r_k/\alpha_k)\}/(2\alpha^{*}+d)}$. Observe that for $\epsilon_{n,\boldsymbol{r}}$ and $\epsilon_{n,\boldsymbol{r},\infty}$ to approach $0$ as $n\rightarrow\infty$, we will need $\sum_{k=1}^d(r_k/\alpha_k)<1$. For the hierarchical Bayes approach, we do not restrict to the inverse gamma prior for $\sigma^2$ but throughout assume that its posterior is consistent uniformly for $\|f_0\|_{\boldsymbol{\alpha},\infty}\leq R$ for any $R>0$.

\begin{theorem}[Pointwise contraction]\label{prop:tsupnormfprime}
If $J_k\asymp n^{\alpha^{*}/\{\alpha_k(2\alpha^{*}+d)\}}$ for $k=1,\dotsc,d$, then for any $\boldsymbol{x}\in[0,1]^d$ and $M_n\rightarrow\infty$,
\begin{align*}
\text{Empirical Bayes:}&\quad\mathrm{E}_0\Pi_{\widehat{\sigma}_n}(|D^{\boldsymbol{r}}f(\boldsymbol{x})-D^{\boldsymbol{r}}f_0(\boldsymbol{x})|>M_n\epsilon_{n,\boldsymbol{r}}|\boldsymbol{Y})\rightarrow0.\\
\text{Hierarchical Bayes:}&\quad\mathrm{E}_0\Pi(|D^{\boldsymbol{r}}f(\boldsymbol{x})-D^{\boldsymbol{r}}f_0(\boldsymbol{x})|>M_n\epsilon_{n,\boldsymbol{r}}|\boldsymbol{Y})\rightarrow0.
\end{align*}
\end{theorem}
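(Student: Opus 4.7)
The plan is as follows. First, Proposition \ref{th:tsigma2con} lets us restrict attention to $\sigma$ in a shrinking neighborhood of $\sigma_0$: in the empirical Bayes case this is automatic from $\widehat{\sigma}_n\to\sigma_0$, and in the hierarchical Bayes case the posterior of $\sigma$ concentrates there with $P_0$-probability tending to one; on this high-probability event both statements reduce to a bound on the conditional posterior of $D^{\boldsymbol{r}}f(\boldsymbol{x})\mid(\boldsymbol{Y},\sigma)$ that is uniform for $\sigma$ in this neighborhood. Because that conditional distribution is $\mathrm{N}(\boldsymbol{A}_{\boldsymbol{r}}(\boldsymbol{x})\boldsymbol{Y}+\boldsymbol{c}_{\boldsymbol{r}}(\boldsymbol{x})\boldsymbol{\eta},\,\sigma^2\Sigma_{\boldsymbol{r}}(\boldsymbol{x},\boldsymbol{x}))$, Markov's inequality applied at the second moment gives
\[
\Pi(|D^{\boldsymbol{r}}f(\boldsymbol{x})-D^{\boldsymbol{r}}f_0(\boldsymbol{x})|> M_n\epsilon_{n,\boldsymbol{r}}\mid \boldsymbol{Y},\sigma)\le \frac{\sigma^2\Sigma_{\boldsymbol{r}}(\boldsymbol{x},\boldsymbol{x})+(\boldsymbol{A}_{\boldsymbol{r}}(\boldsymbol{x})\boldsymbol{Y}+\boldsymbol{c}_{\boldsymbol{r}}(\boldsymbol{x})\boldsymbol{\eta}-D^{\boldsymbol{r}}f_0(\boldsymbol{x}))^2}{M_n^2\,\epsilon_{n,\boldsymbol{r}}^2},
\]
so after taking $\mathrm{E}_0$ the task reduces to showing that the posterior variance, the variance of the posterior mean under $P_0$, and the squared bias of the posterior mean at $\boldsymbol{x}$ are each $O(\epsilon_{n,\boldsymbol{r}}^2)$ uniformly for $\sigma$ in that neighborhood.

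For the posterior variance, \eqref{eq:tsigmar} and \eqref{eq:teigen} combine to give $\Sigma_{\boldsymbol{r}}(\boldsymbol{x},\boldsymbol{x})\le \|(\boldsymbol{B}^T\boldsymbol{B}+\boldsymbol{\Omega}^{-1})^{-1}\|_{(2,2)}\|\boldsymbol{W}_{\boldsymbol{r}}^T\boldsymbol{b}_{\boldsymbol{J},\boldsymbol{q}-\boldsymbol{r}}(\boldsymbol{x})\|^2\lesssim n^{-1}\prod_{k=1}^dJ_k^{2r_k+1}$, where the second factor uses that $\boldsymbol{W}_{\boldsymbol{r}}$ is tensor-banded in the sense of Definition \ref{def:band} with entries of order $\prod_kJ_k^{r_k}$ (from iterating \eqref{eq:firstdiff} on quasi-uniform knots) while only $O(\prod_kq_k)$ coordinates of $\boldsymbol{b}_{\boldsymbol{J},\boldsymbol{q}-\boldsymbol{r}}(\boldsymbol{x})$ are nonzero and bounded by $1$; substituting $J_k\asymp n^{\alpha^*/\{\alpha_k(2\alpha^*+d)\}}$ collapses this bound to $\epsilon_{n,\boldsymbol{r}}^2$. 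For the variance of the posterior mean, $\mathrm{Var}_0(\boldsymbol{A}_{\boldsymbol{r}}(\boldsymbol{x})\boldsymbol{Y})=\sigma_0^2\|\boldsymbol{A}_{\boldsymbol{r}}(\boldsymbol{x})\|^2$, and the sandwich $(\boldsymbol{B}^T\boldsymbol{B}+\boldsymbol{\Omega}^{-1})^{-1}\boldsymbol{B}^T\boldsymbol{B}(\boldsymbol{B}^T\boldsymbol{B}+\boldsymbol{\Omega}^{-1})^{-1}\le(\boldsymbol{B}^T\boldsymbol{B}+\boldsymbol{\Omega}^{-1})^{-1}$ dominates it by $\sigma_0^2\Sigma_{\boldsymbol{r}}(\boldsymbol{x},\boldsymbol{x})$, giving the same rate.

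For the bias, let $\boldsymbol{\theta}^*$ achieve the best tensor B-spline approximation of $f_0$, so that anisotropic B-spline approximation gives $\|f_0-\boldsymbol{b}_{\boldsymbol{J},\boldsymbol{q}}^T\boldsymbol{\theta}^*\|_\infty\lesssim\sum_kJ_k^{-\alpha_k}$; applying $D^{\boldsymbol{r}}$ and using \eqref{eq:derivspline} also gives $\|D^{\boldsymbol{r}}f_0-\boldsymbol{b}_{\boldsymbol{J},\boldsymbol{q}-\boldsymbol{r}}^T\boldsymbol{W}_{\boldsymbol{r}}\boldsymbol{\theta}^*\|_\infty\lesssim\sum_kJ_k^{-(\alpha_k-r_k)}$, each summand of which is $\lesssim\epsilon_{n,\boldsymbol{r}}$ because $1-r_k/\alpha_k\ge 1-\sum_{k'}r_{k'}/\alpha_{k'}$. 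With $\widetilde{\boldsymbol{\theta}}=(\boldsymbol{B}^T\boldsymbol{B}+\boldsymbol{\Omega}^{-1})^{-1}(\boldsymbol{B}^T\boldsymbol{Y}+\boldsymbol{\Omega}^{-1}\boldsymbol{\eta})$, the posterior-mean bias $\boldsymbol{b}_{\boldsymbol{J},\boldsymbol{q}-\boldsymbol{r}}(\boldsymbol{x})^T\boldsymbol{W}_{\boldsymbol{r}}\mathrm{E}_0[\widetilde{\boldsymbol{\theta}}]-D^{\boldsymbol{r}}f_0(\boldsymbol{x})$ splits as
\[
\boldsymbol{b}_{\boldsymbol{J},\boldsymbol{q}-\boldsymbol{r}}(\boldsymbol{x})^T\boldsymbol{W}_{\boldsymbol{r}}\{\mathrm{E}_0[\widetilde{\boldsymbol{\theta}}]-\boldsymbol{\theta}^*\}+\{\boldsymbol{b}_{\boldsymbol{J},\boldsymbol{q}-\boldsymbol{r}}(\boldsymbol{x})^T\boldsymbol{W}_{\boldsymbol{r}}\boldsymbol{\theta}^*-D^{\boldsymbol{r}}f_0(\boldsymbol{x})\},
\]
where the second summand is the approximation error just bounded, and the first is controlled by expanding $\mathrm{E}_0[\widetilde{\boldsymbol{\theta}}]-\boldsymbol{\theta}^*=(\boldsymbol{B}^T\boldsymbol{B}+\boldsymbol{\Omega}^{-1})^{-1}\{\boldsymbol{B}^T(\boldsymbol{F}_0-\boldsymbol{B}\boldsymbol{\theta}^*)+\boldsymbol{\Omega}^{-1}(\boldsymbol{\eta}-\boldsymbol{\theta}^*)\}$ and invoking $\|\boldsymbol{F}_0-\boldsymbol{B}\boldsymbol{\theta}^*\|_\infty\lesssim\sum_kJ_k^{-\alpha_k}$ together with \eqref{eq:tdiagonal}--\eqref{eq:teigen}.

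The main obstacle will be the careful bookkeeping of the $\boldsymbol{W}_{\boldsymbol{r}}$-weighted quantities. Each application of \eqref{eq:firstdiff} generates entries of order $J_k^{r_k}$, and naive spectral bounds on $\|\boldsymbol{W}_{\boldsymbol{r}}^T\boldsymbol{b}_{\boldsymbol{J},\boldsymbol{q}-\boldsymbol{r}}(\boldsymbol{x})\|$ or on the cross term $\boldsymbol{b}_{\boldsymbol{J},\boldsymbol{q}-\boldsymbol{r}}(\boldsymbol{x})^T\boldsymbol{W}_{\boldsymbol{r}}(\boldsymbol{B}^T\boldsymbol{B}+\boldsymbol{\Omega}^{-1})^{-1}\boldsymbol{B}^T(\boldsymbol{F}_0-\boldsymbol{B}\boldsymbol{\theta}^*)$ can produce extra powers of $\prod_kJ_k^{r_k}$ that break the target rate. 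Extracting sharpness requires combining Definition \ref{def:band} (so those large entries only appear across $O(1)$ adjacent multi-indices) with quasi-uniformity of the knots (to control the denominators $t_{k,j_k}-t_{k,j_k-q_k+1}$); together with the eigenvalue bounds \eqref{eq:teigen}, this is what delivers the announced pointwise rate $\epsilon_{n,\boldsymbol{r}}$.
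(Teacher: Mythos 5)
Your proposal is correct and follows essentially the same route as the paper: the same reduction to $\sigma\in\mathcal{U}_n$ via Proposition~\ref{th:tsigma2con}, the same Markov/second-moment decomposition into posterior variance, sampling variance of the posterior mean, and squared bias, the same bounds $\sigma^2\Sigma_{\boldsymbol{r}}(\boldsymbol{x},\boldsymbol{x})\vee\sigma_0^2\Psi_{\boldsymbol{r}}(\boldsymbol{x},\boldsymbol{x})\lesssim n^{-1}\prod_{k}J_k^{2r_k+1}$ (your sandwich bound $\Psi_{\boldsymbol{r}}\le\Sigma_{\boldsymbol{r}}$ is a mild simplification of the paper's direct estimate), and the same bias split through $\boldsymbol{\theta}_\infty$. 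The one delicate point you correctly flag but leave unresolved --- that spectral bounds alone break the rate on the cross term --- is settled in the paper not by \eqref{eq:tdiagonal}--\eqref{eq:teigen} but by working in the $(\infty,\infty)$-norm with $\|\boldsymbol{b}_{\boldsymbol{J},\boldsymbol{q}-\boldsymbol{r}}(\boldsymbol{x})\|_1=1$, Lemma~\ref{lem:tband} (exponential off-diagonal decay of the inverse of the banded matrix $\boldsymbol{B}^T\boldsymbol{B}+\boldsymbol{\Omega}^{-1}$, giving $\|(\boldsymbol{B}^T\boldsymbol{B}+\boldsymbol{\Omega}^{-1})^{-1}\|_{(\infty,\infty)}\lesssim n^{-1}\prod_kJ_k$) and Lemma~\ref{lem:tsumB} (giving $\|\boldsymbol{B}^T(\boldsymbol{F}_0-\boldsymbol{B}\boldsymbol{\theta}_\infty)\|_\infty\lesssim n\prod_kJ_k^{-1}\sum_kJ_k^{-\alpha_k}$), while the derivative approximation bound you assert by ``applying $D^{\boldsymbol{r}}$'' actually requires the quasi-interpolant argument of Lemma~\ref{lem:derivbound}.
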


\begin{remark}\rm
\label{L2 contraction}
The above rate of contraction holds for the $L_2$-distance as well under the same set of assumptions for both  empirical and hierarchical Bayes approaches. This follows since the posterior expectation of the squared $L_2$-norm can be bounded by the integral of the corresponding uniform estimates of the pointwise case obtained in the proof of Theorem~\ref{prop:tsupnormfprime}.
\end{remark}

\begin{theorem}[$L_\infty$-contraction]\label{th:tsupnormfprime}
If $J_k\asymp(n/\log{n})^{\alpha^{*}/\{\alpha_k(2\alpha^{*}+d)\}}$ for $k=1,\dotsc,d$, then for any $M_n\rightarrow\infty$,
\begin{align*}
\text{Empirical Bayes:}&\quad\mathrm{E}_0\Pi_{\widehat{\sigma}_n}(\|D^{\boldsymbol{r}}f-D^{\boldsymbol{r}}f_0\|_\infty>M_n\epsilon_{n,\boldsymbol{r},\infty}|\boldsymbol{Y})\rightarrow0.\\
\text{Hierarchical Bayes:}&\quad\mathrm{E}_0\Pi(\|D^{\boldsymbol{r}}f-D^{\boldsymbol{r}}f_0\|_\infty>M_n\epsilon_{n,\boldsymbol{r},\infty}|\boldsymbol{Y})\rightarrow0.
\end{align*}
\end{theorem}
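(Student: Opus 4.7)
The plan is to reduce to a conditional-on-$\sigma$ analysis and then decompose the error process on $[0,1]^d$ into a Gaussian fluctuation, a stochastic (sub-Gaussian) mean, and a deterministic bias, upgrading each to sup-norm via a chaining/maximal-inequality argument. By Proposition \ref{th:tsigma2con}, both the empirical Bayes $\widehat\sigma_n$ and the hierarchical posterior for $\sigma$ concentrate on an arbitrarily small neighborhood of $\sigma_0$ uniformly over $\|f_0\|_{\boldsymbol\alpha,\infty}\le R$. Hence it suffices, after integrating out $\sigma$, to bound $\Pi(\|D^{\boldsymbol r}f-D^{\boldsymbol r}f_0\|_\infty>M_n\epsilon_{n,\boldsymbol r,\infty}\mid\boldsymbol Y,\sigma)$ uniformly for $\sigma$ in a shrinking neighborhood of $\sigma_0$.

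Using \eqref{eq:tAr}--\eqref{eq:tsigmar}, I would write, for each $\boldsymbol x\in[0,1]^d$,
\begin{equation*}
D^{\boldsymbol r}f(\boldsymbol x)-D^{\boldsymbol r}f_0(\boldsymbol x)
=\mathcal G(\boldsymbol x)+\mathcal T(\boldsymbol x)+\mathcal B(\boldsymbol x),
\end{equation*}
where $\mathcal G(\boldsymbol x)=D^{\boldsymbol r}f(\boldsymbol x)-[\boldsymbol A_{\boldsymbol r}(\boldsymbol x)\boldsymbol Y+\boldsymbol c_{\boldsymbol r}(\boldsymbol x)\boldsymbol\eta]$ is a centered Gaussian process with covariance $\sigma^2\Sigma_{\boldsymbol r}$ under the posterior, $\mathcal T(\boldsymbol x)=\boldsymbol A_{\boldsymbol r}(\boldsymbol x)(\boldsymbol Y-\boldsymbol F_0)$ is a centered sub-Gaussian process in $\boldsymbol x$ under $P_0$ by Assumption~\ref{assump:tf0}, and $\mathcal B(\boldsymbol x)=\boldsymbol A_{\boldsymbol r}(\boldsymbol x)\boldsymbol F_0+\boldsymbol c_{\boldsymbol r}(\boldsymbol x)\boldsymbol\eta-D^{\boldsymbol r}f_0(\boldsymbol x)$ is deterministic. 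This is the same decomposition underlying Theorem~\ref{prop:tsupnormfprime}; only the sup-norm amplification has to be added.

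For $\mathcal G$ and $\mathcal T$, the key point is that both processes take values in the finite-dimensional space spanned by $\boldsymbol b_{\boldsymbol J,\boldsymbol q-\boldsymbol r}$, which has dimension $\prod_k(J_k-r_k)\asymp J\asymp (n/\log n)^{d/(2\alpha^*+d)}$. Because each tensor B-spline of order $\boldsymbol q-\boldsymbol r$ is Lipschitz with constant polynomial in $J$ and has local support of mesh width $\Delta_k$, the supremum over $[0,1]^d$ is within a constant of the maximum over a grid of cardinality $O(J)$ of representative points (knot midpoints, say), plus a negligible discretization remainder. Applying a standard sub-Gaussian maximal inequality yields, up to constants,
\begin{equation*}
\sup_{\boldsymbol x}|\mathcal G(\boldsymbol x)|\lesssim\sqrt{\log J}\,\sup_{\boldsymbol x}\Sigma_{\boldsymbol r}(\boldsymbol x,\boldsymbol x)^{1/2},\qquad \sup_{\boldsymbol x}|\mathcal T(\boldsymbol x)|\lesssim\sqrt{\log J}\,\sup_{\boldsymbol x}\|\boldsymbol A_{\boldsymbol r}(\boldsymbol x)\|.
\end{equation*}
Using \eqref{eq:teigen} together with the pointwise estimates already available for $\mathcal T$ and $\mathcal G$ from the pointwise proof, these sup-norm bounds are of order $\sqrt{\log n}$ times the pointwise $\epsilon_{n,\boldsymbol r}$. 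For $\mathcal B$, an application of the anisotropic tensor-product B-spline approximation result (Chapter~12, Section~3 of \citep{lschumaker}) gives $\|\mathcal B\|_\infty\lesssim\sum_k J_k^{-(\alpha_k-r_k)}$ modulo a lower order shrinkage contribution from $\boldsymbol\Omega^{-1}(\boldsymbol B^T\boldsymbol B+\boldsymbol\Omega^{-1})^{-1}$ controlled by \eqref{assump:tprior} and \eqref{eq:teigen}. The choice $J_k\asymp(n/\log n)^{\alpha^*/(\alpha_k(2\alpha^*+d))}$ then balances the $\sqrt{\log n}$-inflated stochastic term against the bias at the rate $\epsilon_{n,\boldsymbol r,\infty}$.

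The main obstacle will be the sup-norm control of $\mathcal G$ via $\Sigma_{\boldsymbol r}$. To push the pointwise variance bound to a uniform one and to verify the Lipschitz-type increment condition needed for chaining, one has to carefully exploit (i) the banded structure of $\boldsymbol B^T\boldsymbol B+\boldsymbol\Omega^{-1}$, hence of its inverse, established after Definition~\ref{def:band}, (ii) the tensor-product form of $\boldsymbol W_{\boldsymbol r}$ with entries that scale like products of $\mathfrak D^{r_k}_{j_k}$ factors of order $J_k^{r_k}$, and (iii) the fact that $\boldsymbol b_{\boldsymbol J,\boldsymbol q-\boldsymbol r}(\boldsymbol x)$ has at most $\prod_k q_k$ nonzero entries bounded by one. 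Together these give $\sup_{\boldsymbol x}\Sigma_{\boldsymbol r}(\boldsymbol x,\boldsymbol x)\asymp n^{-1}\prod_k J_k^{2r_k+1}$, matching the pointwise variance, and analogous Lipschitz bounds, so that the same $\sqrt{\log n}$ inflation suffices both pointwise and uniformly.
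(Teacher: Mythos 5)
Your proposal follows essentially the same route as the paper's proof: reduce to $\sigma$ in a shrinking neighborhood of $\sigma_0$ via Proposition \ref{th:tsigma2con}, split $D^{\boldsymbol r}f-D^{\boldsymbol r}f_0$ into the posterior Gaussian fluctuation ($\sigma Z_{n,\boldsymbol r}$ with $Z_{n,\boldsymbol r}\sim\mathrm{GP}(0,\Sigma_{\boldsymbol r})$ in the paper's notation), the sampling fluctuation of the posterior mean ($Q_{n,\boldsymbol r}=\boldsymbol A_{\boldsymbol r}\boldsymbol\varepsilon$), and the deterministic bias, then uniformize the first two by a sub-Gaussian maximal inequality and balance against the bias with the stated choice of $J_k$. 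The paper implements exactly this via Lemmas \ref{lem:supexpect}, \ref{lem:tincrement} and \ref{lem:tsplinediff}, together with the uniform bias bound \eqref{eq:tbiassupbound}.

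The one step where your writeup does not close as stated is the discretization. You claim the supremum is within a constant of the maximum over a grid of cardinality $O(J)$ ``plus a negligible discretization remainder.'' With the available increment bound $\mathrm{Var}[Z_{n,\boldsymbol r}(\boldsymbol t)-Z_{n,\boldsymbol r}(\boldsymbol s)]\lesssim\|\boldsymbol J\|^2\|\boldsymbol t-\boldsymbol s\|^2$ (Lemma \ref{lem:tincrement}), a net with spacing of the order of the knot mesh leaves a within-cell oscillation whose standard deviation is of order $\|\boldsymbol J\|\min_k J_k^{-1}$, which is not $o(\epsilon_{n,\boldsymbol r,\infty})$ (and in the anisotropic case $\max_kJ_k/\min_kJ_k$ can even grow polynomially), so the additive remainder is not negligible. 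The paper avoids this by taking a much finer net, $\delta_n^2\asymp n^{-2}\prod_kJ_k^{2r_k+1}$, so that the remainder term $n\delta_n^2$ in Lemma \ref{lem:supexpect} exactly matches the pointwise variance $n^{-1}\prod_kJ_k^{2r_k+1}$; since $T_n\le(2/\delta_n)^d$ is still polynomial in $n$, the logarithmic inflation remains $\log(1/\delta_n)\asymp\log n\asymp\log J$ and nothing is lost in the rate. Alternatively, you could make your coarse grid rigorous by replacing the additive argument with a multiplicative one: for a spline of fixed order $\boldsymbol q-\boldsymbol r$, the sup-norm over a knot cell is bounded by a constant multiple of the maximum of its values at a bounded number of points per cell (this is the local stability of the B-spline basis, in the spirit of Lemma \ref{lem:l2boundr}), which again yields a grid of cardinality $O(J)$ but with a multiplicative rather than additive comparison. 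Either fix yields the paper's bounds \eqref{eq:tvarsupbound}, after which your balancing argument is identical to the paper's and the conclusion follows by Markov's inequality together with $\Pi(\sigma\notin\mathcal U_n|\boldsymbol Y)\to0$.
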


Note that an extra logarithmic factor appears in the $L_\infty$-rate in agreement with the corresponding minimax rate for the problem (see \citep{stone1980,stone1982}). A similar result for the white noise model using a prior based on wavelet basis expansion for the signal function is given by Theorem 1 of \citep{nickl2011} for known variance. It is interesting to note that given any notion of posterior contraction and smoothness index, the same optimal $J_k$, $k=1,\dotsc,d$, applies to $f$ and its mixed partial derivatives, so the Bayes procedure automatically adapts to the order of the derivative to be estimated.

\section{Credible sets for $f$ and its mixed partial derivatives}\label{sec:cred}

We begin by constructing pointwise credible set for $D^{\boldsymbol{r}} f(\boldsymbol{x})$ at $\boldsymbol{x}\in[0,1]^d$, where $\boldsymbol{r}\in\mathbb{N}_0^d$ satisfies $\sum_{k=1}^d (r_k/\alpha_k)<1$. Let $\gamma_n\in[0,1]$ be a sequence such that $\gamma_n\rightarrow0$ as $n\rightarrow\infty$. Define $z_{\delta}$ to be the $(1-\delta)$-quantile of a standard normal. Since $\Pi(D^{\boldsymbol{r}}f(\boldsymbol{x})|\boldsymbol{Y},\sigma)\sim
\mathrm{N}(\boldsymbol{A}_{\boldsymbol{r}}(\boldsymbol{x})\boldsymbol{Y}+\boldsymbol{c}_{\boldsymbol{r}}(\boldsymbol{x})\boldsymbol{\eta},
\sigma^2\Sigma_{\boldsymbol{r}}(\boldsymbol{x},\boldsymbol{x}))$, we can construct a $(1-\gamma_n)$-pointwise credible interval for $D^{\boldsymbol{r}} f(\boldsymbol{x})$ from the relation
\begin{align*}
\Pi(g:|g(\boldsymbol{x})-\boldsymbol{A}_{\boldsymbol{r}}(\boldsymbol{x})\boldsymbol{Y}-\boldsymbol{c}_{\boldsymbol{r}}(\boldsymbol{x})\boldsymbol{\eta}|\leq z_{\gamma_n/2}\sigma\sqrt{\Sigma_{\boldsymbol{r}}(\boldsymbol{x},\boldsymbol{x})}|\boldsymbol{Y},\sigma)=1-\gamma_n.
\end{align*}
However, as $\sigma$ is unknown, we use empirical Bayes by substituting $\sigma$ by  $\widehat{\sigma}_n$ derived in \eqref{eq:tsigma2hat}, leading to the following empirical credible set:
$$
\widehat{\mathcal{C}}_{n, \boldsymbol{r}, \gamma_n}(\boldsymbol{x})=\{g:|g(\boldsymbol{x})-\boldsymbol{A}_{\boldsymbol{r}}(\boldsymbol{x})\boldsymbol{Y}-
\boldsymbol{c}_{\boldsymbol{r}}(\boldsymbol{x})\boldsymbol{\eta}|\leq z_{\gamma_n/2}\widehat{\sigma}_n\sqrt{\Sigma_{\boldsymbol{r}}(\boldsymbol{x},\boldsymbol{x})}\}.
$$
For the hierarchical Bayes approach, the resulting credible region is given by $\mathcal{C}_{n, \boldsymbol{r}, \gamma_n}(\boldsymbol{x}) =\{g:|g(\boldsymbol{x})-\boldsymbol{A}_{\boldsymbol{r}}(\boldsymbol{x})\boldsymbol{Y}-\boldsymbol{c}_{\boldsymbol{r}}(\boldsymbol{x})\boldsymbol{\eta}|\leq R_{n, \boldsymbol{r}, \gamma_n}(\boldsymbol{x})\}$, where $R_{n, \boldsymbol{r}, \gamma_n}(\boldsymbol{x})$ is the $(1-\gamma_n)$-quantile of the marginal posterior distribution of $|D^{\boldsymbol{r}}f(\boldsymbol{x})-\boldsymbol{A}_{\boldsymbol{r}}(\boldsymbol{x})\boldsymbol{Y}-\boldsymbol{c}_{\boldsymbol{r}}(\boldsymbol{x})\boldsymbol{\eta}|$ after  integrating out $\sigma$ with respect to its posterior distribution. If the conjugate inverse-gamma prior is used on $\sigma^2$, then the cut-off may be expressed explicitly in terms of quantiles of a generalized t-distribution. In general, the cut-off value $R_{n, \boldsymbol{r}, \gamma_n}(\boldsymbol{x})$ may be found by posterior sampling: generate $\sigma$ from its marginal posterior distribution and $D^{\boldsymbol{r}}f|(\boldsymbol{Y},\sigma)\sim\mathrm{GP}(\boldsymbol{A}_{\boldsymbol{r}}\boldsymbol{Y}+\boldsymbol{c}_{\boldsymbol{r}}\boldsymbol{\eta},\sigma^2\Sigma_{\boldsymbol{r}})$.

\begin{theorem}[Pointwise credible intervals]\label{th:tcrpoint}
If $J_k\asymp n^{\alpha^{*}/\{\alpha_k(2\alpha^{*}+d)\}}$, $k=1,\dotsc,d$, then for $\gamma_n\to 0$, the coverage of $\widehat{\mathcal{C}}_{n, \boldsymbol{r}, \gamma_n}(\boldsymbol{x})$ tends to $1$ and its radius is $O_{P_0}(\epsilon_{n,\boldsymbol{r}}\sqrt{\log{(1/\gamma_n)}})$ at  $\boldsymbol{x}\in[0,1]^d$ uniformly on $\|f_0\|_{\boldsymbol{\alpha},\infty}\leq R$.

If the posterior distribution of $\sigma$ is consistent, then the same conclusion holds for the hierarchical Bayes credible set $\mathcal{C}_{n, \boldsymbol{r}, \gamma_n}(\boldsymbol{x})$.
\end{theorem}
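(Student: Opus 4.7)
The plan is to write $D^{\boldsymbol{r}}f_0(\boldsymbol{x}) - [\boldsymbol{A}_{\boldsymbol{r}}(\boldsymbol{x})\boldsymbol{Y} + \boldsymbol{c}_{\boldsymbol{r}}(\boldsymbol{x})\boldsymbol{\eta}]$ as a bias term plus a centered Gaussian (or sub-Gaussian) noise term, and to show that both are of smaller order than the half-width $z_{\gamma_n/2}\widehat{\sigma}_n\sqrt{\Sigma_{\boldsymbol{r}}(\boldsymbol{x},\boldsymbol{x})}$, so that the inflation factor $z_{\gamma_n/2}\to\infty$ drives the coverage probability to one. Under $P_0$, write $\boldsymbol{Y}=\boldsymbol{F}_0+\boldsymbol{\varepsilon}$ and decompose
\[
D^{\boldsymbol{r}}f_0(\boldsymbol{x})-\boldsymbol{A}_{\boldsymbol{r}}(\boldsymbol{x})\boldsymbol{Y}-\boldsymbol{c}_{\boldsymbol{r}}(\boldsymbol{x})\boldsymbol{\eta}
=\underbrace{\bigl[D^{\boldsymbol{r}}f_0(\boldsymbol{x})-\boldsymbol{A}_{\boldsymbol{r}}(\boldsymbol{x})\boldsymbol{F}_0-\boldsymbol{c}_{\boldsymbol{r}}(\boldsymbol{x})\boldsymbol{\eta}\bigr]}_{\text{bias}}
-\underbrace{\boldsymbol{A}_{\boldsymbol{r}}(\boldsymbol{x})\boldsymbol{\varepsilon}}_{\text{noise}}.
\]
These are exactly the bias-plus-variance quantities already controlled in the proof of the pointwise contraction theorem (Theorem \ref{prop:tsupnormfprime}): the bias is $O(\epsilon_{n,\boldsymbol{r}})$ uniformly on $\|f_0\|_{\boldsymbol{\alpha},\infty}\le R$, via the standard tensor-product B-spline approximation bound applied to $D^{\boldsymbol{r}}f_0$, combined with \eqref{eq:teigen} to absorb the $\boldsymbol{c}_{\boldsymbol{r}}\boldsymbol{\eta}$ prior-shift contribution; the noise $\boldsymbol{A}_{\boldsymbol{r}}(\boldsymbol{x})\boldsymbol{\varepsilon}$ is sub-Gaussian with variance proxy $\sigma_0^{2}\,\boldsymbol{A}_{\boldsymbol{r}}(\boldsymbol{x})\boldsymbol{A}_{\boldsymbol{r}}(\boldsymbol{x})^T$.

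The crux is then to compare the noise variance to the posterior variance $\Sigma_{\boldsymbol{r}}(\boldsymbol{x},\boldsymbol{x})$. Using \eqref{eq:tAr}, \eqref{eq:tsigmar} and the sandwich bound \eqref{eq:teigen}, a direct computation with $(\boldsymbol{B}^T\boldsymbol{B}+\boldsymbol{\Omega}^{-1})^{-1}\boldsymbol{B}^T\boldsymbol{B}(\boldsymbol{B}^T\boldsymbol{B}+\boldsymbol{\Omega}^{-1})^{-1}\le(\boldsymbol{B}^T\boldsymbol{B}+\boldsymbol{\Omega}^{-1})^{-1}$ shows $\boldsymbol{A}_{\boldsymbol{r}}(\boldsymbol{x})\boldsymbol{A}_{\boldsymbol{r}}(\boldsymbol{x})^T\le\Sigma_{\boldsymbol{r}}(\boldsymbol{x},\boldsymbol{x})$, and the reverse inequality holds up to a constant independent of $n$ because $n\prod_kJ_k^{-1}\to\infty$ dominates $c_1^{-1}$. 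Combined with the lower bound $\Sigma_{\boldsymbol{r}}(\boldsymbol{x},\boldsymbol{x})\asymp\epsilon_{n,\boldsymbol{r}}^{\,2}$ obtained by a separate two-sided calculation (using the spline quadratic form bounds together with the expressions in Lemma \ref{lem:Wr} for $\boldsymbol{W}_{\boldsymbol{r}}$), we conclude both that the bias is of order $\epsilon_{n,\boldsymbol{r}}=o(z_{\gamma_n/2}\sqrt{\Sigma_{\boldsymbol{r}}(\boldsymbol{x},\boldsymbol{x})})$ and that the sub-Gaussian noise satisfies, for any $M_n\to\infty$,
\[
P_0\Bigl(|\boldsymbol{A}_{\boldsymbol{r}}(\boldsymbol{x})\boldsymbol{\varepsilon}|>M_n\sqrt{\Sigma_{\boldsymbol{r}}(\boldsymbol{x},\boldsymbol{x})}\Bigr)\le 2\exp(-c M_n^{2})\to0.
\]
Taking $M_n=z_{\gamma_n/2}/2$ and invoking Proposition \ref{th:tsigma2con}(a) to replace $\widehat{\sigma}_n$ by $\sigma_0(1+o_{P_0}(1))$ then yields $P_0(D^{\boldsymbol{r}}f_0(\boldsymbol{x})\in\widehat{\mathcal{C}}_{n,\boldsymbol{r},\gamma_n}(\boldsymbol{x}))\to1$, uniformly on $\|f_0\|_{\boldsymbol{\alpha},\infty}\le R$. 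The radius bound is immediate from $z_{\gamma_n/2}\asymp\sqrt{\log(1/\gamma_n)}$, $\widehat{\sigma}_n=O_{P_0}(1)$, and $\Sigma_{\boldsymbol{r}}(\boldsymbol{x},\boldsymbol{x})=O(\epsilon_{n,\boldsymbol{r}}^{\,2})$.

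For the hierarchical case, the cutoff $R_{n,\boldsymbol{r},\gamma_n}(\boldsymbol{x})$ is the $(1-\gamma_n)$-quantile of the marginal posterior of $|D^{\boldsymbol{r}}f(\boldsymbol{x})-\boldsymbol{A}_{\boldsymbol{r}}(\boldsymbol{x})\boldsymbol{Y}-\boldsymbol{c}_{\boldsymbol{r}}(\boldsymbol{x})\boldsymbol{\eta}|$. Conditional on $\sigma$, this is a centered normal with variance $\sigma^{2}\Sigma_{\boldsymbol{r}}(\boldsymbol{x},\boldsymbol{x})$, so by the posterior consistency $\sigma\mid\boldsymbol{Y}\to\sigma_0$ in $P_0$-probability (Proposition \ref{th:tsigma2con}(b) or (c)), the marginal posterior distribution of that quantity is contiguous to $N(0,\sigma_0^{2}\Sigma_{\boldsymbol{r}}(\boldsymbol{x},\boldsymbol{x}))$ and its $(1-\gamma_n)$-quantile is $(1+o_{P_0}(1))z_{\gamma_n/2}\sigma_0\sqrt{\Sigma_{\boldsymbol{r}}(\boldsymbol{x},\boldsymbol{x})}$; the coverage and radius conclusions then follow exactly as in the empirical Bayes case.

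The main obstacle is the comparison $\boldsymbol{A}_{\boldsymbol{r}}(\boldsymbol{x})\boldsymbol{A}_{\boldsymbol{r}}(\boldsymbol{x})^T\asymp\Sigma_{\boldsymbol{r}}(\boldsymbol{x},\boldsymbol{x})\asymp\epsilon_{n,\boldsymbol{r}}^{\,2}$ with matching constants that are uniform in $\boldsymbol{x}\in[0,1]^d$; this is what makes the centering $\boldsymbol{A}_{\boldsymbol{r}}(\boldsymbol{x})\boldsymbol{Y}+\boldsymbol{c}_{\boldsymbol{r}}(\boldsymbol{x})\boldsymbol{\eta}$ a frequentist-valid estimator whose sampling fluctuation is captured by the posterior spread, and it relies squarely on the banding and eigenvalue results \eqref{eq:tdiagonal}--\eqref{eq:teigen} and on the explicit form of $\boldsymbol{W}_{\boldsymbol{r}}$ from Lemma \ref{lem:Wr}.
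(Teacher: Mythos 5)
Your proposal is correct and follows essentially the same route as the paper's proof: the same bias-plus-noise decomposition of $D^{\boldsymbol{r}}f_0(\boldsymbol{x})-D^{\boldsymbol{r}}\widetilde{f}(\boldsymbol{x})$, the same identification of the crux as the two-sided comparison $\Psi_{\boldsymbol{r}}(\boldsymbol{x},\boldsymbol{x})\asymp\Sigma_{\boldsymbol{r}}(\boldsymbol{x},\boldsymbol{x})\asymp\epsilon_{n,\boldsymbol{r}}^2$ via \eqref{eq:teigen} and the explicit entries of $\boldsymbol{W}_{\boldsymbol{r}}$ (the paper's display \eqref{eq:Bprimelower}), and the same quantile-sandwiching argument via $\Pi(\sigma\in\mathcal{U}_n|\boldsymbol{Y})\to1$ for the hierarchical case. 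The only cosmetic difference is that you invoke a sub-Gaussian exponential tail bound for the noise where the paper simply applies Markov's inequality to the mean squared error; both suffice since $z_{\gamma_n/2}\to\infty$.
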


\begin{remark}\rm
\label{L2credible}
We can also define a $(1-\gamma_n)$-credible set in the $L_2$-norm for $D^{\boldsymbol{r}}f$ given $\boldsymbol{Y}$ and $\sigma$ as the set of all functions which differ from $\boldsymbol{A}_{\boldsymbol{r}}(\boldsymbol{x})\boldsymbol{Y}+\boldsymbol{c}_{\boldsymbol{r}}(\boldsymbol{x})\boldsymbol{\eta}$ in the $L_2$-norm by $\sigma h_{n, \boldsymbol{r}, 2, \gamma_n}$, where $h_{n, \boldsymbol{r}, 2, \gamma_n}$ is the $1-\gamma_n$ quantile of the $L_2$-norm of $\mathrm{GP}(0,\Sigma_{\boldsymbol{r}})$. Then the empirical Bayes credible set is obtained by substituting $\sigma$ by $\widehat{\sigma}_n$. The hierarchical Bayes credible set is obtained by replacing  $\sigma h_{n, \boldsymbol{r},2, \gamma_n}$ by the $1-\gamma_n$ quantile of  $\| D^{\boldsymbol{r}}f-\boldsymbol{A}_{\boldsymbol{r}}\boldsymbol{Y}-\boldsymbol{c}_{\boldsymbol{r}}\boldsymbol{\eta}\|_2$. Both credible regions have asymptotic coverage $1$ under the assumptions in Theorem~\ref{th:tcrpoint}.
\end{remark}

The $(1-\gamma_n)$-empirical Bayes $L_\infty$-credible set for $D^{\boldsymbol{r}}f$, can be expressed as  $\{g:\|g-\boldsymbol{A}_{\boldsymbol{r}}\boldsymbol{Y}-\boldsymbol{c}_{\boldsymbol{r}}\boldsymbol{\eta}\|_\infty\leq
\rho_n\widehat{\sigma}_nh_{n, \boldsymbol{r}, \infty, \gamma_n}\}$, where  $h_{n, \boldsymbol{r}, \infty, \gamma_n}$ is the $(1-\gamma_n)$-quantile of the $L_\infty$-norm of $\mathrm{GP}(0,\Sigma_{\boldsymbol{r}})$. It turns out that in order to obtain adequate frequentist coverage, this natural credible ball needs to be slightly inflated by a factor $\rho_n$, leading to the inflated empirical Bayes credible region
\begin{align}\label{eq:tradius}
\widehat{\mathcal{C}}_{n, \boldsymbol{r}, \infty, \gamma_n}^{\rho_n}=\{g:\|g-\boldsymbol{A}_{\boldsymbol{r}}\boldsymbol{Y}-\boldsymbol{c}_{\boldsymbol{r}}\boldsymbol{\eta}\|_\infty\leq
\rho_n\widehat{\sigma}_nh_{n, \boldsymbol{r}, \infty, \gamma_n}\}.
\end{align}
On the other hand, unlike in the pointwise or the $L_2$-credible regions, we need not make $\gamma_n\to 0$, but can allow any fixed $\gamma<1/2$.
In the hierarchical Bayes approach, we consider the analogous credible ball ${\mathcal{C}}_{n, \boldsymbol{r}, \infty, \gamma}^{\rho_n}=\{g:\|g-\boldsymbol{A}_{\boldsymbol{r}}\boldsymbol{Y}-\boldsymbol{c}_{\boldsymbol{r}}\boldsymbol{\eta}\|_\infty\leq
\rho_n R_{n, \boldsymbol{r}, \infty, \gamma}\}$, where $R_{n, \boldsymbol{r}, \infty, \gamma}$ stands for the $(1-\gamma)$-quantile of the marginal posterior distribution of $\|D^{\boldsymbol{r}}f-\boldsymbol{A}_{\boldsymbol{r}}\boldsymbol{Y}-\boldsymbol{c}_{\boldsymbol{r}}\boldsymbol{\eta}\|_\infty$ integrating out $\sigma$ with respect to its posterior distribution.

\begin{theorem}[$L_\infty$-credible region]\label{th:tcrinftyrho}
If $J_k\asymp(n/\log{n})^{\alpha^{*}/\{\alpha_k(2\alpha+d)\}}$ for $k=1,\dotsc,d$, then for any $\rho_n\to \infty$ and $\gamma<1/2$, the coverage of $\widehat{\mathcal{C}}_{n, \boldsymbol{r}, \infty, \gamma}^{\rho_n}$ tends to $1$ and its radius is $O_{P_0}(\epsilon_{n,\boldsymbol{r},\infty}\rho_n)$ uniformly in $\|f_0\|_{\boldsymbol{\alpha},\infty}\leq R$.
Moreover, if the true distribution of the regression errors is Gaussian, then we can let $\rho_n=\rho$ for some sufficiently large constant $\rho>0$.

If the posterior distribution of $\sigma$ is consistent, then the same conclusion holds for the hierarchical Bayes $L_\infty$-credible ball ${\mathcal{C}}_{n, \boldsymbol{r}, \infty, \gamma}^{\rho_n}$.
\end{theorem}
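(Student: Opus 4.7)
My plan is to reduce the theorem to two essentially independent estimates. Writing
\[
D^{\boldsymbol{r}}f_0-\bigl(\boldsymbol{A}_{\boldsymbol{r}}\boldsymbol{Y}+\boldsymbol{c}_{\boldsymbol{r}}\boldsymbol{\eta}\bigr)
=\bigl(D^{\boldsymbol{r}}f_0-\boldsymbol{A}_{\boldsymbol{r}}\boldsymbol{F}_0-\boldsymbol{c}_{\boldsymbol{r}}\boldsymbol{\eta}\bigr)-\boldsymbol{A}_{\boldsymbol{r}}\boldsymbol{\varepsilon},
\]
the first piece is a deterministic approximation bias and the second a centered sub-Gaussian process under $P_0$. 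I will show (a) the radius $\rho_n\widehat{\sigma}_nh_{n,\boldsymbol{r},\infty,\gamma}$ is $O_{P_0}(\rho_n\epsilon_{n,\boldsymbol{r},\infty})$, and (b) the centering error above has sup-norm $O_{P_0}(\epsilon_{n,\boldsymbol{r},\infty})$. Since $\rho_n\to\infty$, these two bounds force the coverage to $1$ uniformly in $\|f_0\|_{\boldsymbol{\alpha},\infty}\leq R$, and (a) is exactly the radius claim.

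\textbf{Bound on the radius.} Proposition~\ref{th:tsigma2con}(a) gives $\widehat{\sigma}_n=\sigma_0+o_{P_0}(1)$ uniformly over the H\"older ball, so it suffices to control $h_{n,\boldsymbol{r},\infty,\gamma}$. Let $Z\sim\mathrm{GP}(0,\Sigma_{\boldsymbol{r}})$; since $\gamma<1/2$ is fixed, Borell's Gaussian concentration inequality gives $h_{n,\boldsymbol{r},\infty,\gamma}\lesssim\mathrm{E}\|Z\|_\infty+\sup_{\boldsymbol{x}}\sqrt{\Sigma_{\boldsymbol{r}}(\boldsymbol{x},\boldsymbol{x})}$. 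A chaining bound exploiting the local support of $\boldsymbol{b}_{\boldsymbol{J},\boldsymbol{q}-\boldsymbol{r}}(\cdot)$ on $q_k$ adjacent knot intervals yields $\mathrm{E}\|Z\|_\infty\lesssim\sqrt{\log J}\,\sup_{\boldsymbol{x}}\sqrt{\Sigma_{\boldsymbol{r}}(\boldsymbol{x},\boldsymbol{x})}$ with $J=\prod_kJ_k$. From \eqref{eq:tsigmar} and the eigenvalue bounds \eqref{eq:teigen}, together with standard estimates on $\|\boldsymbol{W}_{\boldsymbol{r}}\|_{(2,2)}$ and $\|\boldsymbol{b}_{\boldsymbol{J},\boldsymbol{q}-\boldsymbol{r}}\|_\infty$, one obtains $\sup_{\boldsymbol{x}}\Sigma_{\boldsymbol{r}}(\boldsymbol{x},\boldsymbol{x})\lesssim n^{-1}\prod_k J_k^{2r_k+1}$. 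Plugging in $J_k\asymp(n/\log n)^{\alpha^*/\{\alpha_k(2\alpha^*+d)\}}$ and using $\alpha^{*-1}=d^{-1}\sum_k\alpha_k^{-1}$, arithmetic gives $h_{n,\boldsymbol{r},\infty,\gamma}=O(\epsilon_{n,\boldsymbol{r},\infty})$.

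\textbf{Bound on the centering error.} The bias $\|D^{\boldsymbol{r}}f_0-\boldsymbol{A}_{\boldsymbol{r}}\boldsymbol{F}_0-\boldsymbol{c}_{\boldsymbol{r}}\boldsymbol{\eta}\|_\infty$ is deterministic and is handled by the same anisotropic tensor-product B-spline approximation estimate (Chapter~12 of \citep{lschumaker}) used in the proof of Theorem~\ref{th:tsupnormfprime}, together with the $O(1)$ bound on $\|\boldsymbol{c}_{\boldsymbol{r}}\boldsymbol{\eta}\|_\infty$ obtained from $\|\boldsymbol{\eta}\|_\infty=O(1)$, \eqref{assump:tprior} and \eqref{eq:teigen}; this gives a bound of order $\epsilon_{n,\boldsymbol{r},\infty}$. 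For the stochastic piece $\boldsymbol{A}_{\boldsymbol{r}}(\boldsymbol{x})\boldsymbol{\varepsilon}$, this is a sub-Gaussian process with parameter $\sigma_0\|\boldsymbol{A}_{\boldsymbol{r}}(\boldsymbol{x})\|_2$, and \eqref{eq:tAr} together with \eqref{eq:teigen} shows $\boldsymbol{A}_{\boldsymbol{r}}\boldsymbol{A}_{\boldsymbol{r}}^T$ is pointwise dominated by a constant multiple of $\Sigma_{\boldsymbol{r}}$. The same chaining argument as for $Z$, now applied with sub-Gaussian increments, yields $\|\boldsymbol{A}_{\boldsymbol{r}}\boldsymbol{\varepsilon}\|_\infty=O_{P_0}(\sqrt{\log J}\,\sup_{\boldsymbol{x}}\|\boldsymbol{A}_{\boldsymbol{r}}(\boldsymbol{x})\|_2)=O_{P_0}(\epsilon_{n,\boldsymbol{r},\infty})$. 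Combining bias and variance gives (b).

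\textbf{Gaussian refinement, hierarchical version, and main obstacle.} When the errors are Gaussian, $\boldsymbol{A}_{\boldsymbol{r}}\boldsymbol{\varepsilon}$ is itself a centered Gaussian process with covariance kernel $\sigma_0^2\boldsymbol{A}_{\boldsymbol{r}}(\cdot)\boldsymbol{A}_{\boldsymbol{r}}(\cdot)^T$. A Neumann expansion of $(\boldsymbol{B}^T\boldsymbol{B}+\boldsymbol{\Omega}^{-1})^{-1}$ exploiting the negligibility of $\boldsymbol{\Omega}^{-1}$ against $n\prod_k J_k^{-1}\boldsymbol{I}_J$ shows $\boldsymbol{A}_{\boldsymbol{r}}\boldsymbol{A}_{\boldsymbol{r}}^T=\Sigma_{\boldsymbol{r}}\,(1+o(1))$ uniformly on $[0,1]^{2d}$; then Sudakov–Fernique comparison gives $\mathrm{E}\|\boldsymbol{A}_{\boldsymbol{r}}\boldsymbol{\varepsilon}\|_\infty\le C\sigma_0\mathrm{E}\|Z\|_\infty$, and Borell's inequality promotes this to a comparable bound on the respective quantiles, so a sufficiently large fixed $\rho>0$ works. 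For the hierarchical ball ${\mathcal{C}}_{n,\boldsymbol{r},\infty,\gamma}^{\rho_n}$, replace the $\widehat{\sigma}_n$ step by uniform posterior consistency of $\sigma$: on a posterior-high-probability event $\{|\sigma-\sigma_0|\le\delta_n\}$ with $\delta_n\to 0$, the cutoff $R_{n,\boldsymbol{r},\infty,\gamma}$ is $\sigma_0 h_{n,\boldsymbol{r},\infty,\gamma}(1+o_{P_0}(1))$, and (a)--(b) transfer verbatim. The main obstacle, and the only step that is not essentially a rerun of ingredients already used for Theorem~\ref{th:tsupnormfprime}, is the uniform kernel-level comparison $\boldsymbol{A}_{\boldsymbol{r}}\boldsymbol{A}_{\boldsymbol{r}}^T\approx\Sigma_{\boldsymbol{r}}$ needed for the Gaussian refinement, since here one must control second-order terms in the inversion of $\boldsymbol{B}^T\boldsymbol{B}+\boldsymbol{\Omega}^{-1}$ in the tensor-product multi-index geometry, rather than relying solely on eigenvalue bounds.
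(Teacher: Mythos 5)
There is a genuine gap at the very first step of your argument, and it concerns the central point of the theorem. You propose to establish (a) an \emph{upper} bound, radius $=O_{P_0}(\rho_n\epsilon_{n,\boldsymbol{r},\infty})$, and (b) centering error $=O_{P_0}(\epsilon_{n,\boldsymbol{r},\infty})$, and then to conclude coverage from $\rho_n\to\infty$. This does not follow: coverage requires the radius to \emph{exceed} the centering error with probability tending to one, so an upper bound on the radius is of no use for coverage — what is needed is the \emph{lower} bound $h_{n,\boldsymbol{r},\infty,\gamma}\gtrsim\epsilon_{n,\boldsymbol{r},\infty}$, i.e.\ $h_{n,\boldsymbol{r},\infty,\gamma}^2\gtrsim(\log n/n)\prod_{k=1}^dJ_k^{2r_k+1}$. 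Your two bounds are consistent with, say, $h_{n,\boldsymbol{r},\infty,\gamma}=o(\epsilon_{n,\boldsymbol{r},\infty})$, in which case the deterministic bias part of the centering error (which is genuinely of order $\epsilon_{n,\boldsymbol{r},\infty}$, being the spline approximation error, and cannot be made smaller) would fail to be covered for slowly growing $\rho_n$. Nowhere in your proposal is this lower bound proved; every estimate you give for $h_{n,\boldsymbol{r},\infty,\gamma}$, $\mathrm{E}\|Z\|_\infty$ and $\sup_{\boldsymbol{x}}\Sigma_{\boldsymbol{r}}(\boldsymbol{x},\boldsymbol{x})$ goes in the upper direction.

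The paper's proof is organized precisely around this missing step. It first notes that for fixed $\gamma<1/2$ the quantile dominates the median, hence $h_{n,\boldsymbol{r},\infty,\gamma}\gtrsim\mathrm{E}\|Z_{n,\boldsymbol{r}}\|_\infty$, and then lower-bounds $\mathrm{E}\|Z_{n,\boldsymbol{r}}\|_\infty$ by restricting the Gaussian process to the $N=\prod_{k=1}^dN_k$ knot points, proving via the multivariate mean value theorem, the explicit entries \eqref{eq:tendpoints} of $\boldsymbol{W}_{\boldsymbol{r}}$ and the eigenvalue bound \eqref{eq:teigen} that every pairwise increment variance is $\gtrsim n^{-1}\prod_{k=1}^dJ_k^{2r_k+1}$, and finally invoking Slepian's lemma against i.i.d.\ standard normals to obtain $\mathrm{E}\max Z_{n,\boldsymbol{r}}\gtrsim\sqrt{(\log N)\,n^{-1}\prod_{k=1}^dJ_k^{2r_k+1}}\asymp\epsilon_{n,\boldsymbol{r},\infty}$. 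This anti-concentration step is also what drives the fixed-$\rho$ Gaussian refinement: there the paper only needs Borell's inequality to show $\|Q_{n,\boldsymbol{r}}\|_\infty$ concentrates around its mean at scale $\epsilon_{n,\boldsymbol{r}}\ll\epsilon_{n,\boldsymbol{r},\infty}$, together with the lower bound $t_{\infty,\gamma}\geq C_1\epsilon_{n,\boldsymbol{r},\infty}$ — your proposed kernel-level comparison $\boldsymbol{A}_{\boldsymbol{r}}\boldsymbol{A}_{\boldsymbol{r}}^T\approx\Sigma_{\boldsymbol{r}}$, which you correctly identify as the delicate point of your route, is in fact unnecessary. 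Your parts (a) and (b) are correct and do appear in the paper (they yield the radius claim and, via Markov's inequality, the coverage), but only after the quantile lower bound is in place.
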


\begin{remark}\rm
To control the size of $\widehat{\mathcal{C}}_{n, \boldsymbol{r}, \infty, \gamma}^{\rho_n}$ and ensure guaranteed frequentist coverage, we can take $\rho_n$ to be a factor slowly tending to infinity, or a sufficiently large constant for the Gaussian situation. A similar correction factor was also used by \citep{botond2015} in the context of adaptive $L_2$-credible region.
\end{remark}

\section{Simulation}\label{sec:sim}
We compare finite sample performance of pointwise credible intervals and $L_\infty$-credible bands for $f$ in one dimension (i.e., $d=1$, $r=0$)  with confidence intervals and $L_\infty$-confidence bands proposed by Theorem~4.1 of \citep{localspline}. Following \citep{inverseprob}, we consider the true function
$f_0(x)=\sqrt{2}\sum_{i=1}^\infty i^{-3/2}\sin i\cos\{{(i-1/2)\pi x}\}$,
$x\in[0,1]$, which has smoothness $\alpha=1$. We observed the signal $f_0$ with i.i.d. $\mathrm{N}(0,0.1)$ errors at covariate values at $X_i=(i-1)/(n-1)$ for $i=1,\dotsc,n$. We use cubic B-splines (i.e., $q=4$) with uniform knot sequence, where we added 4 duplicate knots at $0$ and $1$. For the prior parameters, we set $\boldsymbol{\eta}=\boldsymbol{0}$ and $\boldsymbol{\Omega}=\boldsymbol{I}_J$. We construct $(1-\gamma_n)$-empirical credible intervals for $\gamma_n=5/n$ with $\widehat{\sigma}_n$ computed using \eqref{eq:tsigma2hat}. The corresponding confidence regions are constructed using Theorem 4.1 of \citep{localspline} based on the least squares estimator $\widehat{f}(x)=\boldsymbol{b}_{J,q}(x)^T\widehat{\boldsymbol{\theta}}$ for $\widehat{\boldsymbol{\theta}}=(\boldsymbol{B}^T\boldsymbol{B})^{-1}\boldsymbol{B}^T\boldsymbol{Y}$, and $\widetilde{\sigma}_n^2=(\boldsymbol{Y}-\boldsymbol{B\widehat{\theta}})^T
(\boldsymbol{Y}-\boldsymbol{B\widehat{\theta}})/(n-J)$. In the Bayesian context when the smoothing parameter $J$ is to be determined from the data, it is natural to use its posterior mode. However for a fair comparison, we used leave-one-out cross validation to determine $J$ for both methods and also observed that the posterior mode essentially chose the same values. We conduct our experiment across sample sizes $n=100,300,500,700,1000,2000$. For pointwise credible and confidence intervals, we report the empirical coverage based on $1000$ Monte Carlo runs for each $n$. All simulations were carried out in \texttt{R} using the \texttt{bs} function from the \texttt{splines} package.

The coverage probabilities of pointwise credible and confidence intervals are shown in Figure \ref{fig:pcov}.
\begin{figure}[h!]
\centering
\begin{subfigure}[b]{0.25\textwidth}
\includegraphics[width=\textwidth]{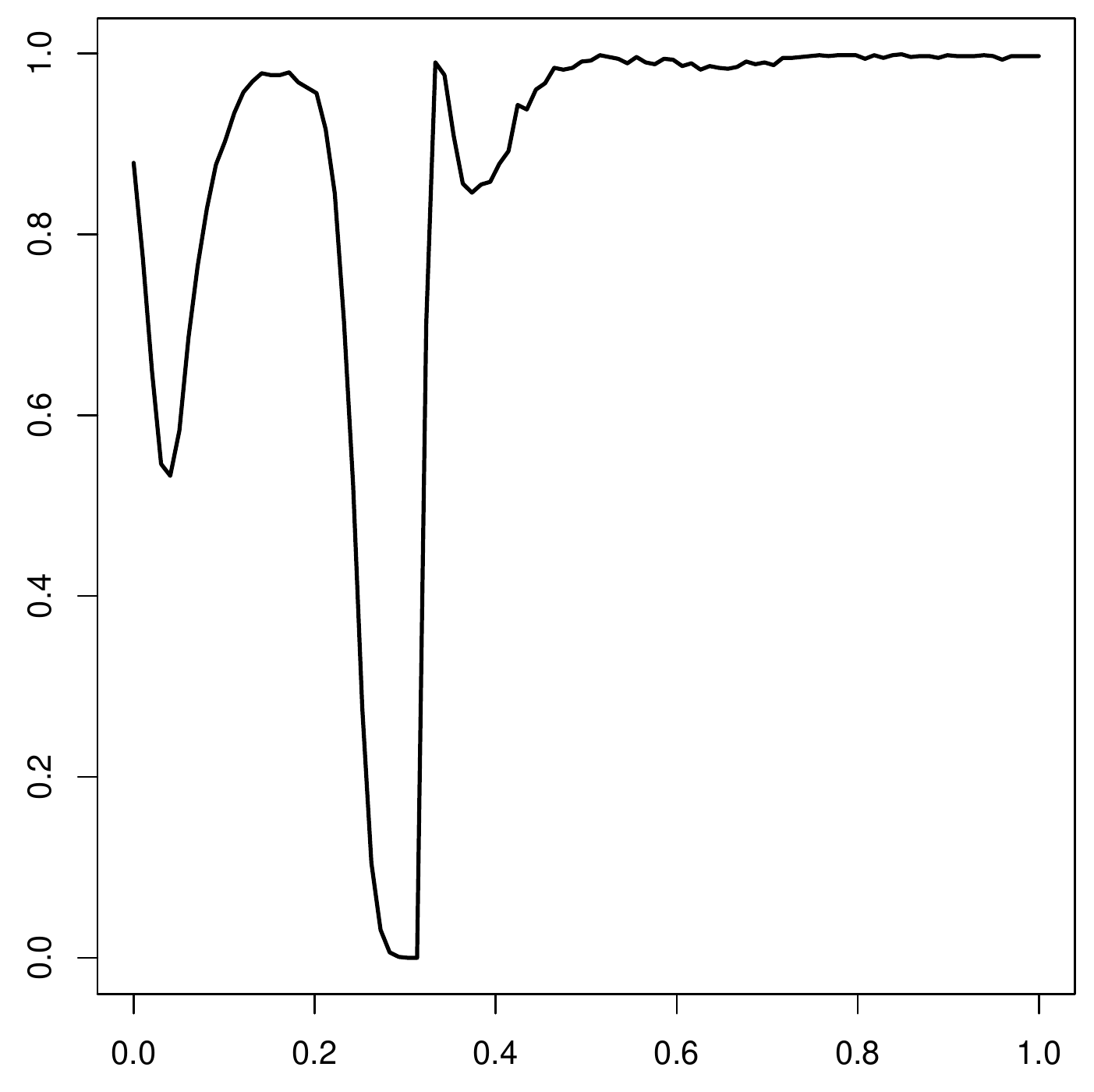}
\caption{$n=100$\\  \centering credibility $=0.95$}
\end{subfigure}
\begin{subfigure}[b]{0.25\textwidth}
\includegraphics[width=\textwidth]{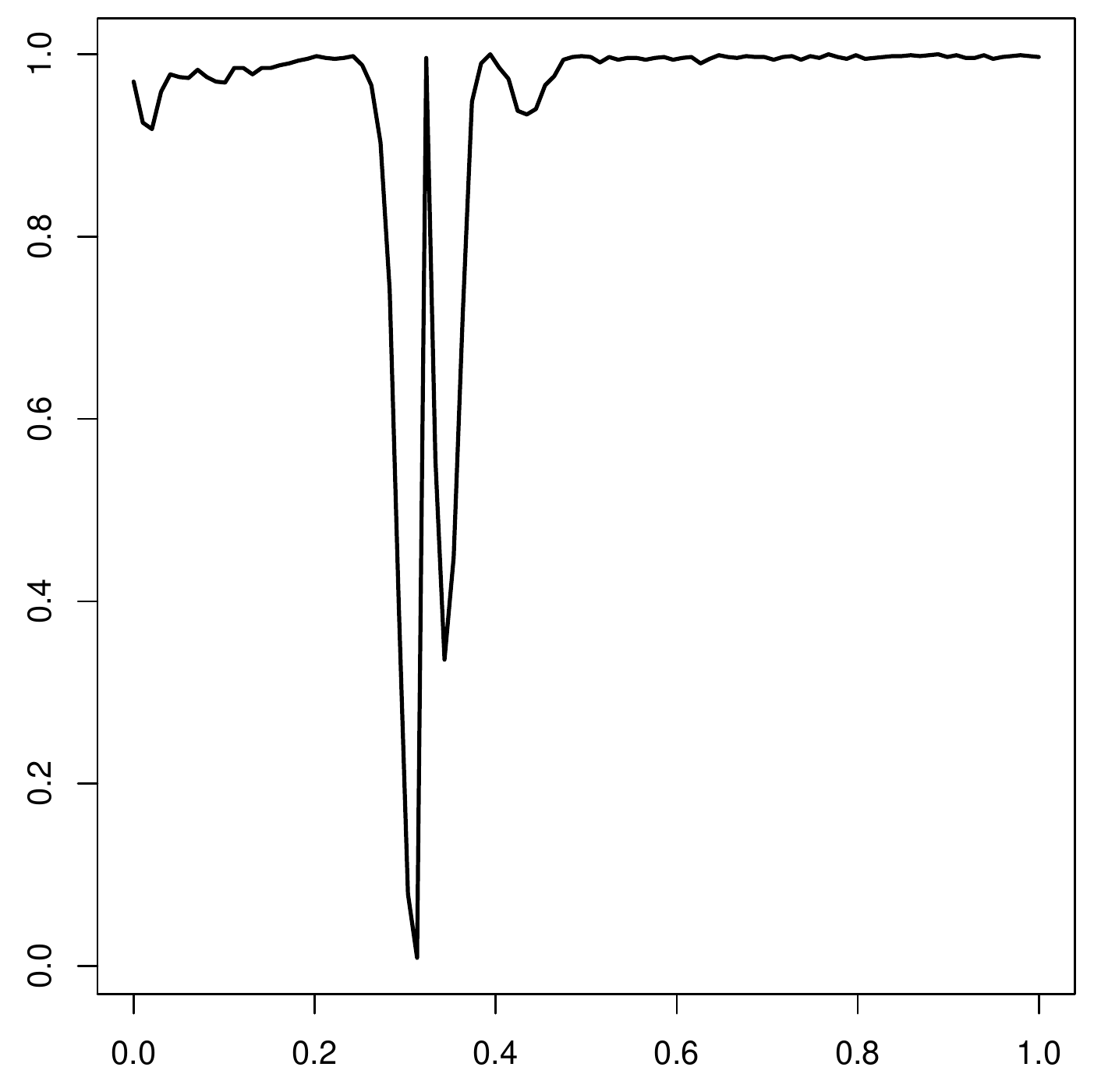}
\caption{$n=500$\\  \centering credibility $=0.99$}
\end{subfigure}
\begin{subfigure}[b]{0.25\textwidth}
\includegraphics[width=\textwidth]{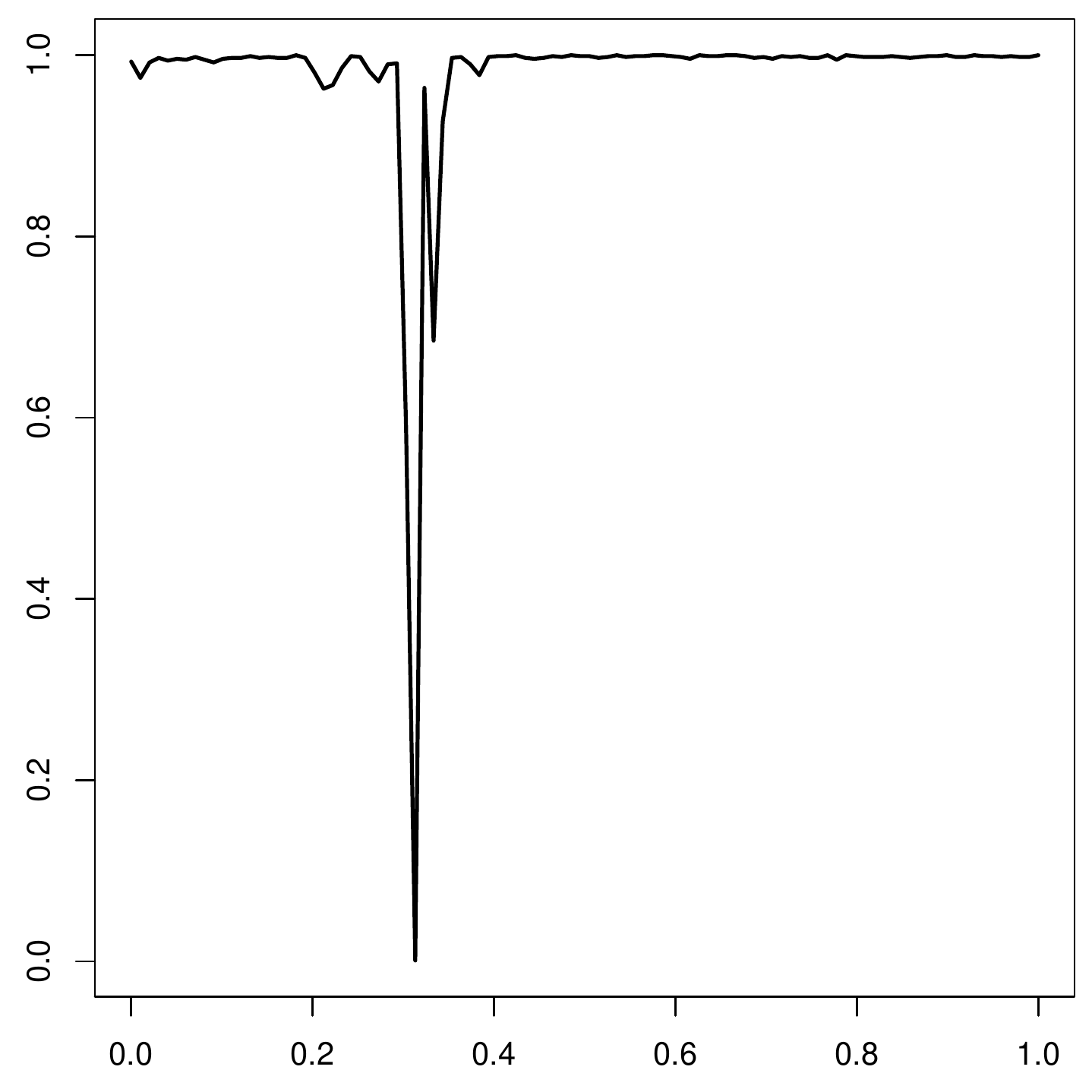}
\caption{$n=2000$\\ \centering credibility $=0.9975$}
\end{subfigure}

\begin{subfigure}[b]{0.25\textwidth}
\includegraphics[width=\textwidth]{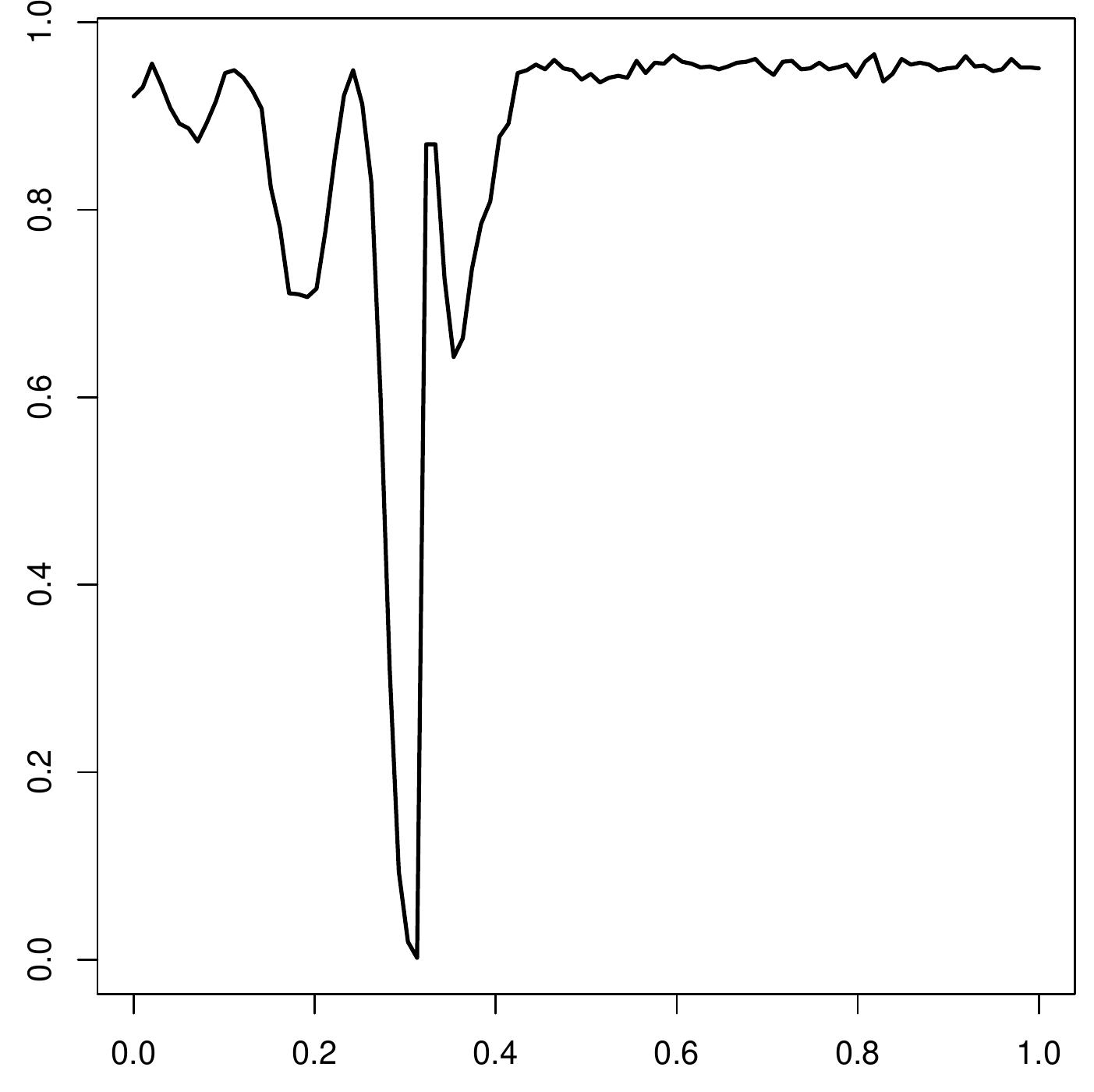}
\caption{$n=100$\\ \centering confidence $=0.95$}
\end{subfigure}
\begin{subfigure}[b]{0.25\textwidth}
\includegraphics[width=\textwidth]{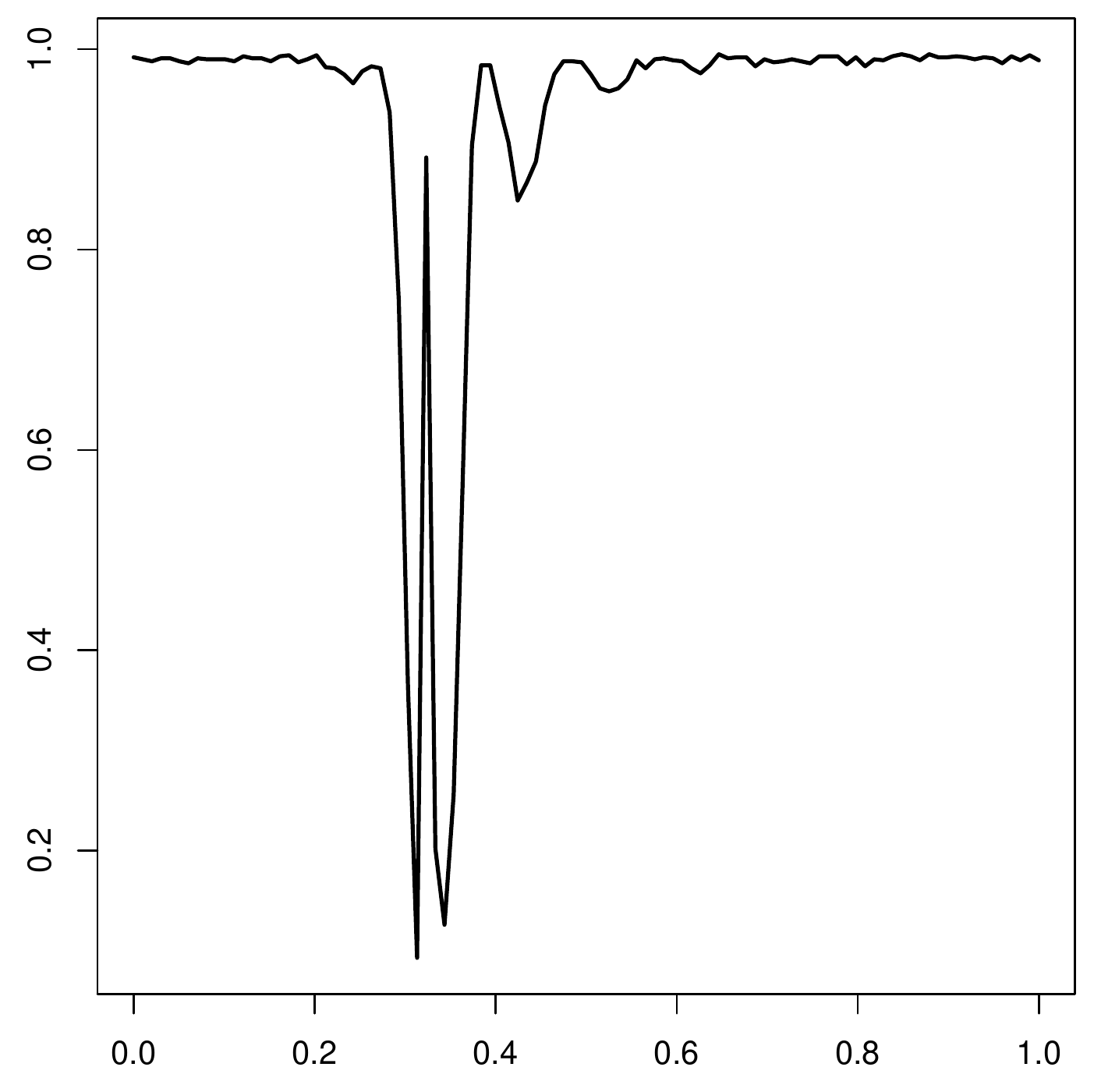}
\caption{$n=500$\\ \centering confidence $=0.99$}
\end{subfigure}
\begin{subfigure}[b]{0.25\textwidth}
\includegraphics[width=\textwidth]{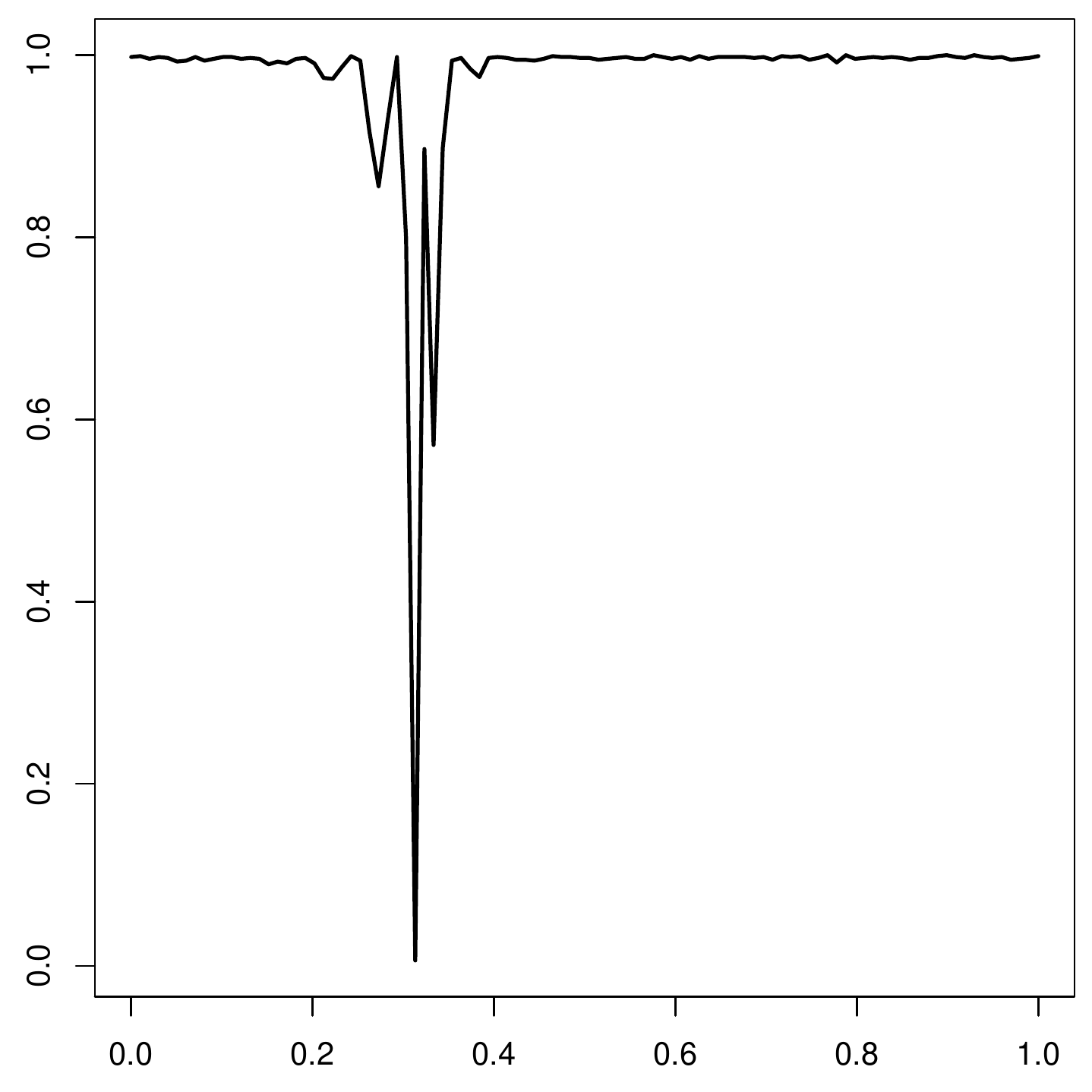}
\caption{$n=2000$\\ \centering confidence $=0.9975$}
\end{subfigure}
\caption{Pointwise coverage probabilities for credible and confidence intervals. The $y$-axis is the coverage probabilities and the $x$-axis is the covariate $x$.}
\label{fig:pcov}
\end{figure}
One distinguishing feature is the downward spike at around the bump of $f_0$ at $x=0.3$ in Figure \ref{fig:covband}, and the plots narrow down to this point as $n$ increases. Moreover, the pointwise coverage is $0$ at this point for both Bayesian and frequentist methods in all sample sizes considered. This phenomenon occurs perhaps due to the fact that the true function at $x=0.3$ has a sharp bend but the function is much smoother elsewhere, so based on a limited sample the cross-validation method oversmooths by choosing a smaller $J$ than ideal. Both methods yield almost the same pointwise coverage for large sample sizes, and are equivalent in quantifying uncertainty of estimating $f_0$. To cover the function at all points, we consider the simultaneous (modified) credible band at the level $1-\gamma=0.95$, given by
$(\boldsymbol{A}_0(x)\boldsymbol{Y}+\boldsymbol{c}_0(x)\boldsymbol{\eta})\pm\rho\widehat{\sigma}_nh_{n, 0, \infty, \gamma}$.

The second assertion of Theorem \ref{th:tcrinftyrho} allows us to use a fixed $\rho$ because our true errors are normally distributed which we choose as $\rho=0.5$. To construct $(1-\gamma)$-asymptotic confidence band, we use Theorem 4.2 of \citep{localspline}.
\begin{table}[htbp]
  \centering
  \caption{$95\%$ simultaneous credible and confidence bands.}
    \begin{tabular}{ccccccc}
    \toprule
    $n$   & \textbf{100} & \textbf{300} & \textbf{500} & \textbf{700} & \textbf{1000} & \textbf{2000} \\
    \midrule
    Credible band coverage  & 0.852     & 0.896     & 0.954     & 0.945     & 0.964     & 0.972 \\
    Confidence band coverage & 0.972 & 0.948 & 0.963 & 0.978 & 0.985 & 0.986 \\
    Credible band radius & 0.235     & 0.155     & 0.148     & 0.127     & 0.121     & 0.098 \\
    Confidence band mean radius & 0.27     & 0.165     & 0.147     & 0.132     & 0.129     & 0.101 \\
    Confidence band max radius & 0.64     & 0.436     & 0.409     & 0.374     & 0.372     & 0.3 \\
    \bottomrule
    \end{tabular}
  \label{tab:covband}
\end{table}
\begin{figure}[h!]
\centering
\begin{subfigure}[b]{0.25\textwidth}
\includegraphics[width=\textwidth]{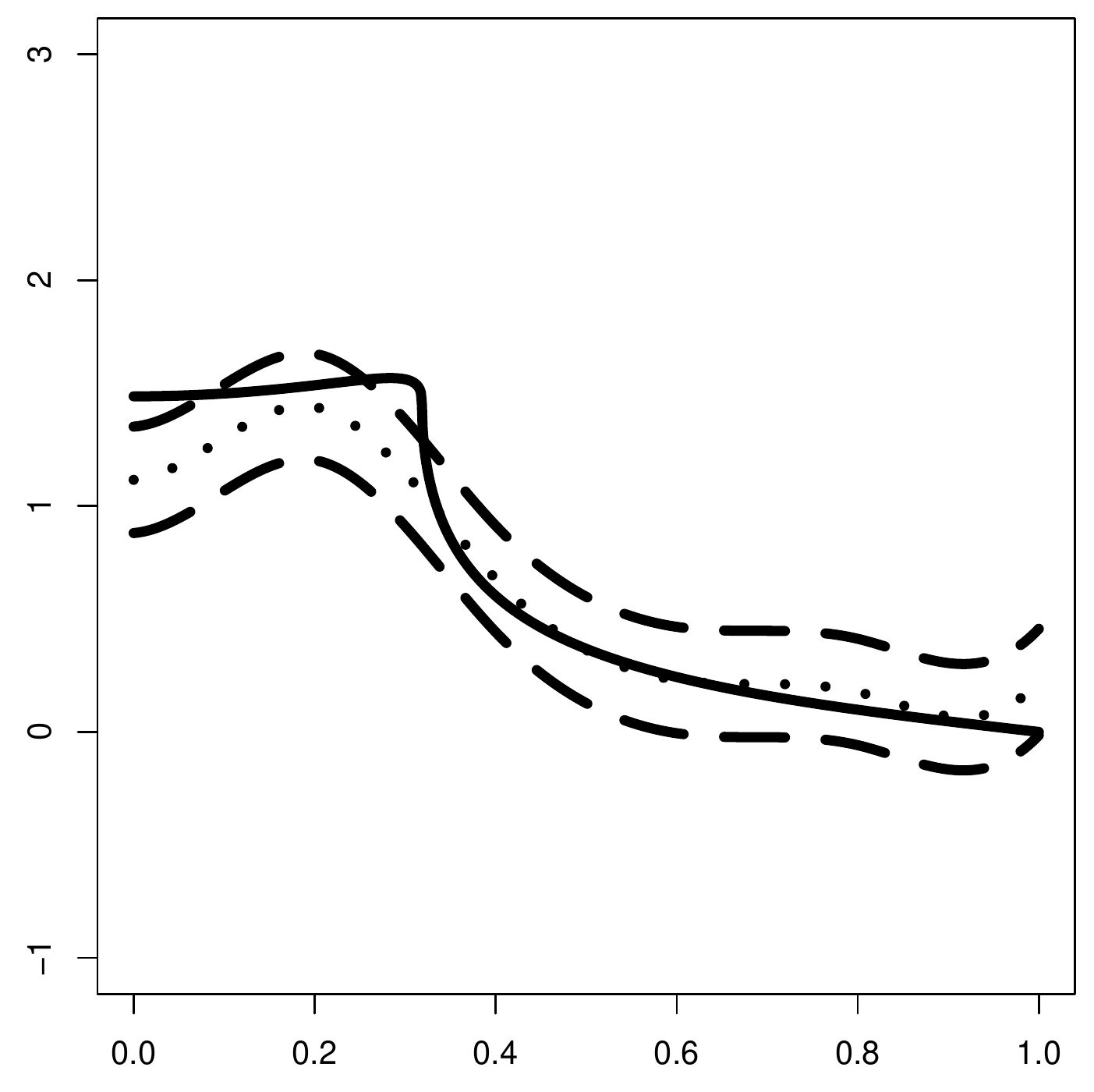}
\caption{Bayes: $n=100$}
\end{subfigure}
\begin{subfigure}[b]{0.25\textwidth}
\includegraphics[width=\textwidth]{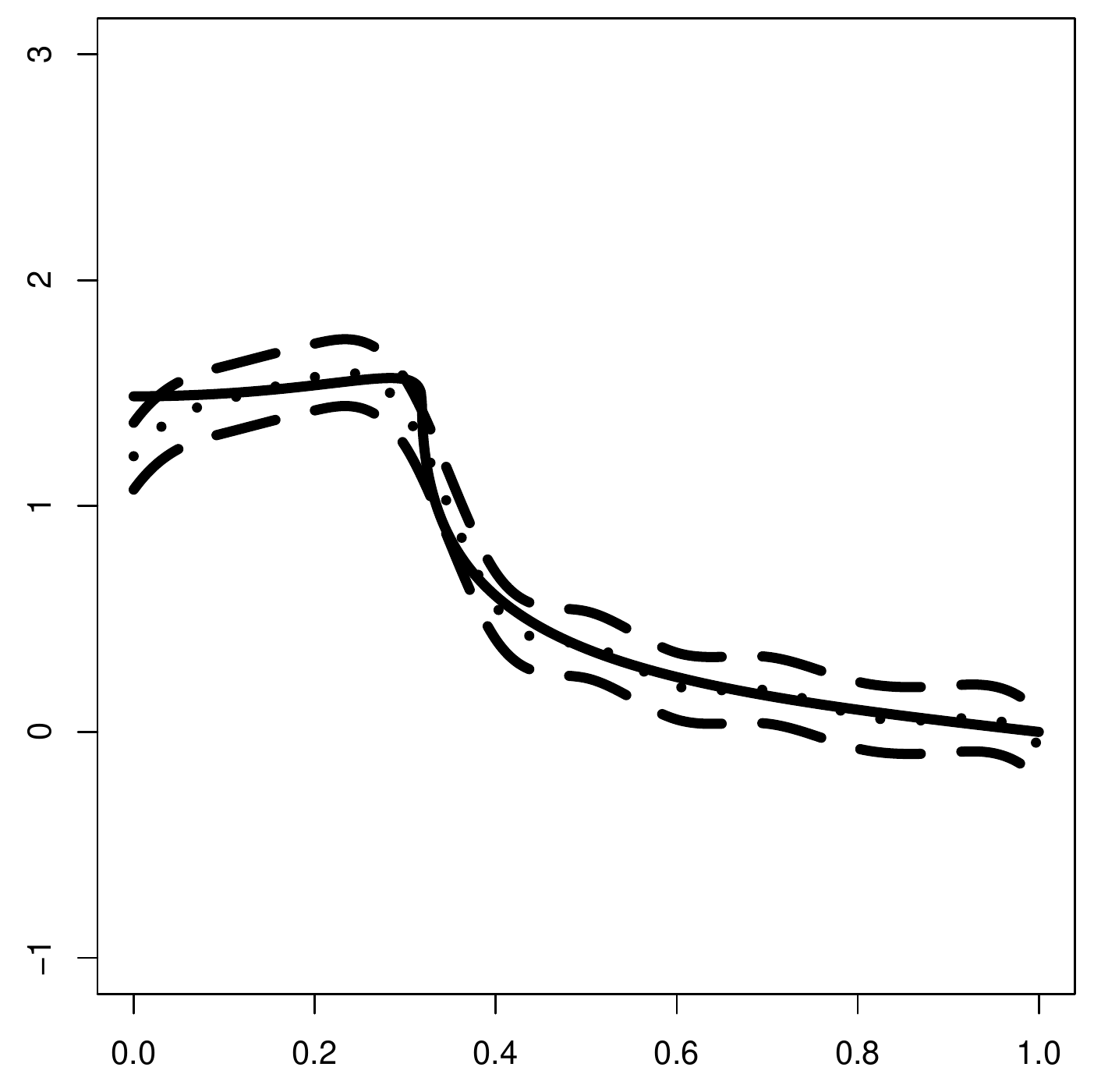}
\caption{Bayes: $n=500$}
\end{subfigure}
\begin{subfigure}[b]{0.25\textwidth}
\includegraphics[width=\textwidth]{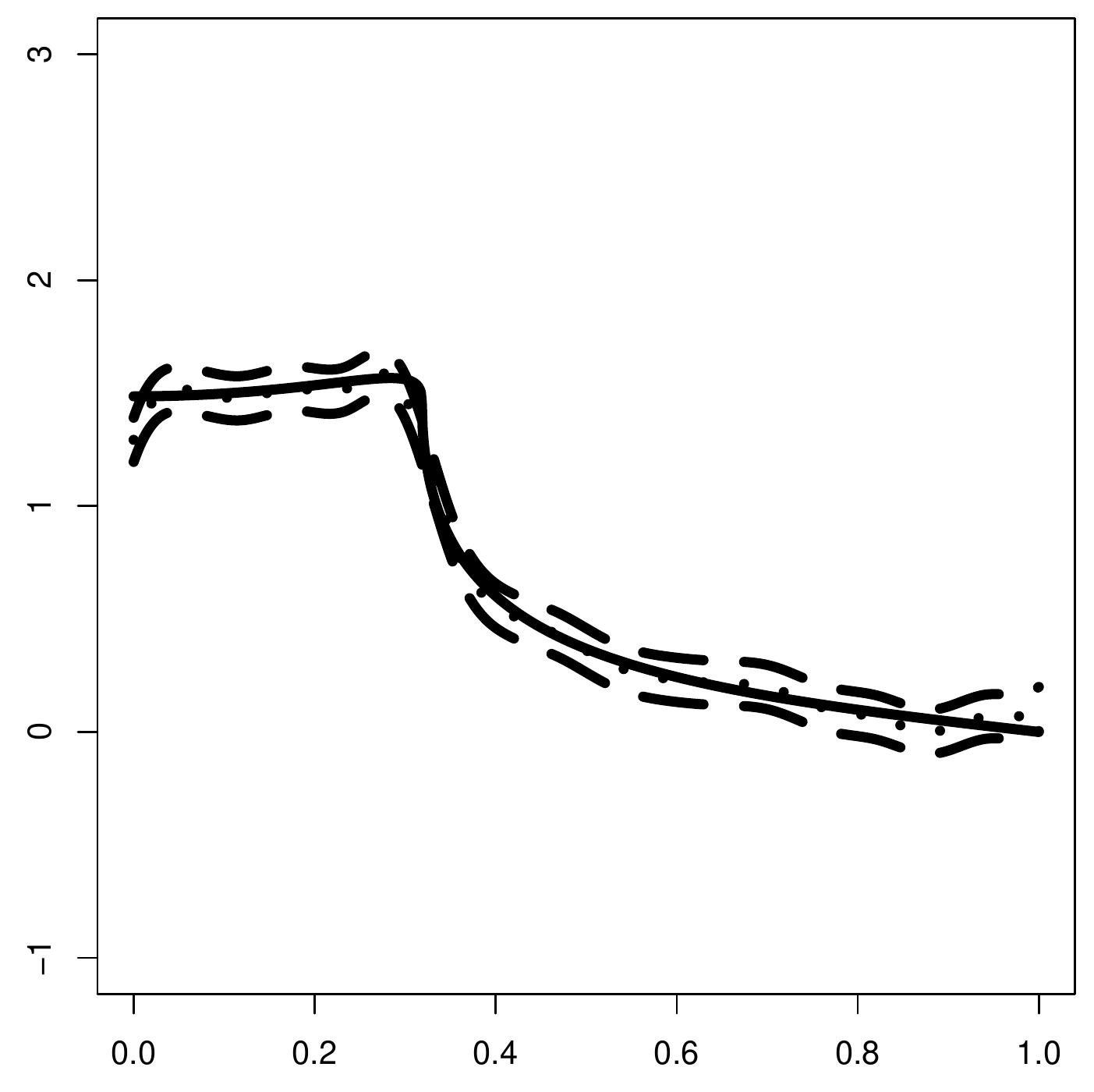}
\caption{Bayes: $n=2000$}
\end{subfigure}
\begin{subfigure}[b]{0.25\textwidth}
\includegraphics[width=\textwidth]{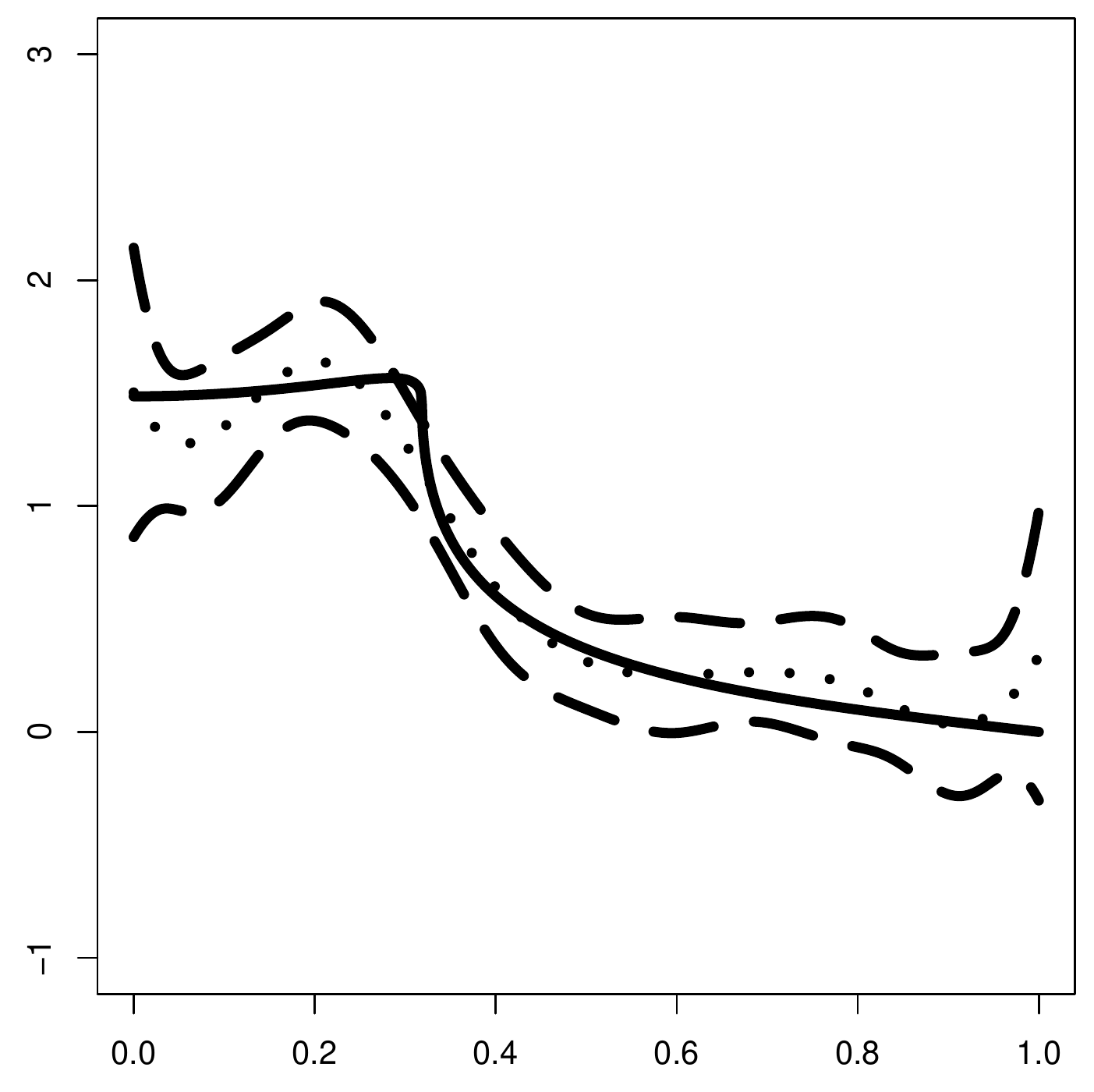}
\caption{Frequentist: \\ \centering$n=100$}
\end{subfigure}
\begin{subfigure}[b]{0.25\textwidth}
\includegraphics[width=\textwidth]{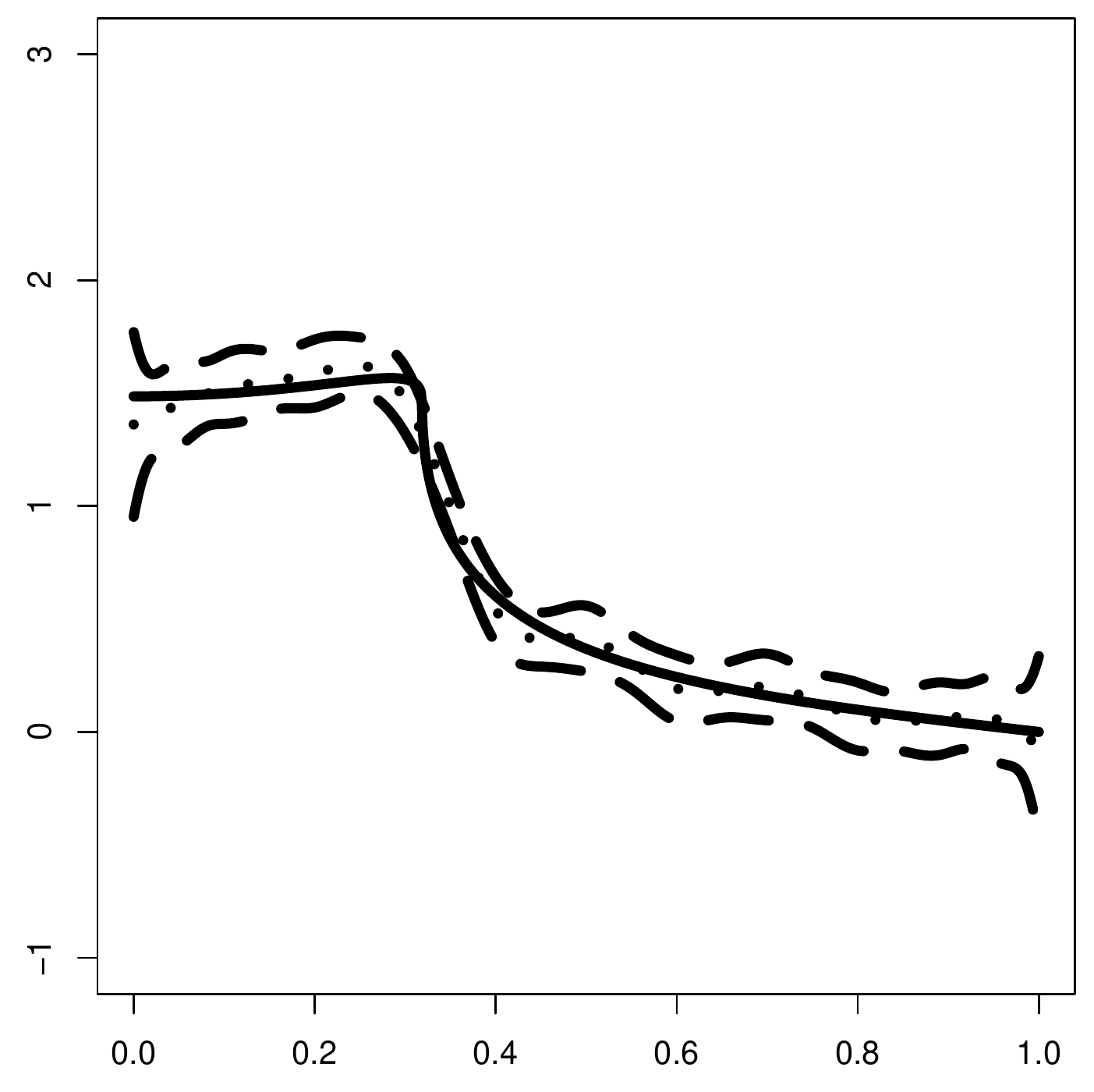}
\caption{Frequentist: \\ \centering $n=500$}
\end{subfigure}
\begin{subfigure}[b]{0.25\textwidth}
\includegraphics[width=\textwidth]{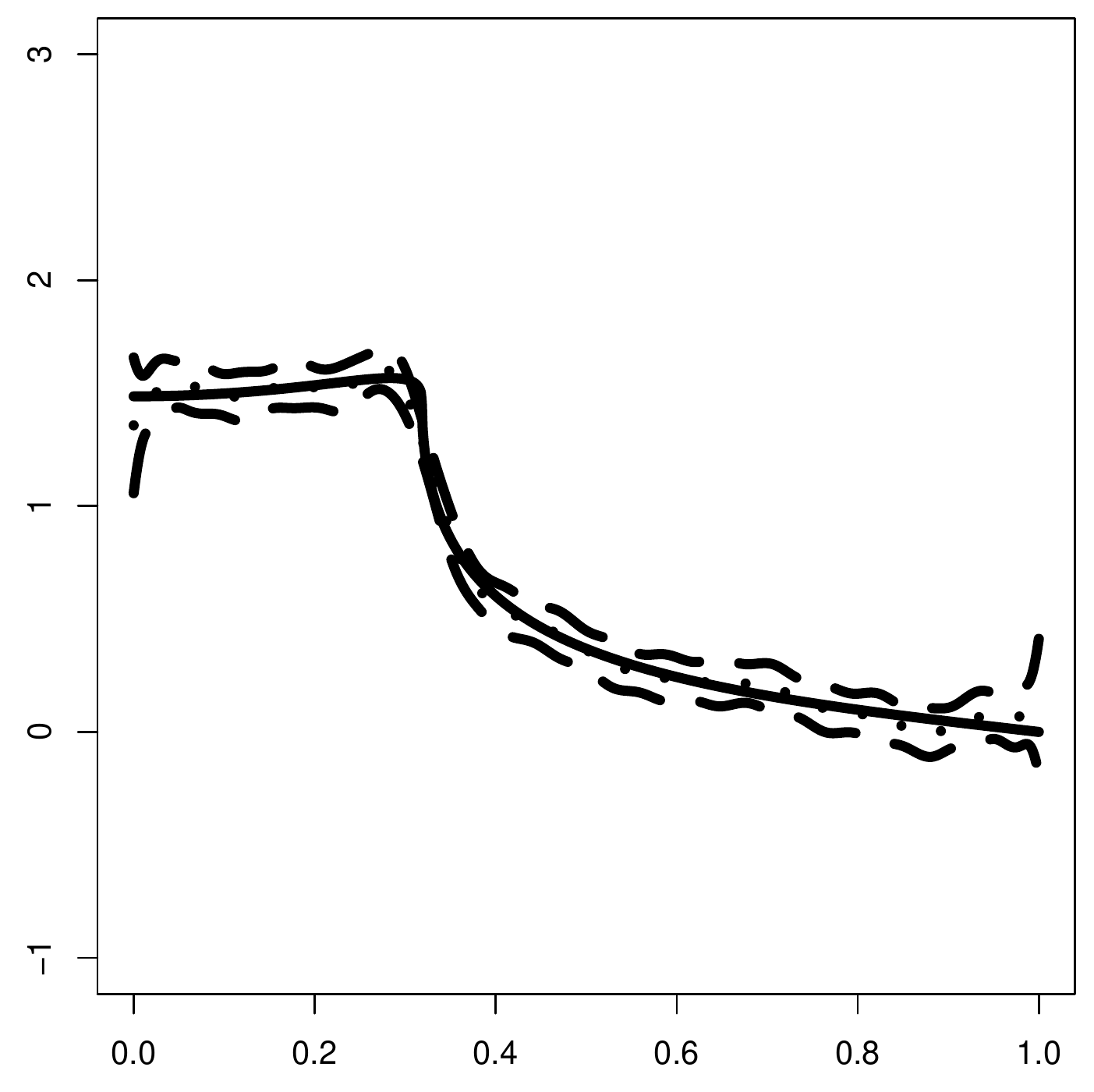}
\caption{Frequentist: \\ \centering $n=2000$}
\end{subfigure}
\caption{Dots: posterior mean (top) and $\widehat{f}$ (bottom), Solid: true function, Dashes: $95\%$ $L_\infty$-credible (top) and confidence (bottom) bands.}
\label{fig:covband}
\end{figure}
Table \ref{tab:covband} shows the coverage of $95\%$ simultaneous credible and confidence bands. At $n=100$, the apparent higher coverage of the confidence bands is due to the positive bias of $\widetilde{\sigma}_n^2$ for small $n$. From $n=300$ onward, the coverage of both credible and confidence bands steadily increase with $n$. The corresponding graphical representations of these bands are shown in Figure \ref{fig:covband}. The top panel corresponds to the proposed Bayesian method, where the dotted line stands for the posterior mean and dashed lines for the  $95\%$ credible band. The bottom panel corresponds to the frequentist method of \citep{localspline}, where the dotted line standing for the least squares estimator $\widehat{f}$ and the dashed lines for the $95\%$ $L_\infty$-confidence bands. In both panels, the solid line is the true function $f_0$. Observe that the credible bands have fixed length, while the confidence bands have varying lengths. This is because the procedure of \citep{localspline} is based on the supremum of the scaled absolute differences. Therefore for the latter we present both average and maximum radius. The frequentist method has larger width at the endpoints due to the fact that there are fewer observations, and this results in larger maximum radius.

\section{Proofs}\label{sec:proof}
We shall repeatedly use the following fact about approximation power of tensor product B-splines given by (12.37) of \citep{lschumaker}.

For any $R>0$, if $\|f_0\|_{\boldsymbol{\alpha},\infty}\leq R$,  there exists a $\boldsymbol{\theta}_\infty\in\mathbb{R}^J$ such that for constant $C>0$ depending only on $\boldsymbol{\alpha}$, $\boldsymbol{q}$ and $d$, we have
\begin{equation}\label{eq:tapprox}
\|\boldsymbol{b}_{\boldsymbol{J},\boldsymbol{q}}(\cdot)^T\boldsymbol{\theta}_\infty-f_0\|_\infty\leq C\sum_{k=1}^dJ_k^{-\alpha_k}\left\|\frac{\partial^{\alpha_k}}{\partial x_k^{\alpha_k}}f_0\right\|_\infty\lesssim\sum_{k=1}^dJ_k^{-\alpha_k}.
\end{equation}
Since $\|\boldsymbol{b}_{\boldsymbol{J},\boldsymbol{q}}(\cdot)^T\boldsymbol{\theta}_\infty\|_\infty
\leq\|f_0\|_{\boldsymbol{\alpha},\infty}+C\sum_{k=1}^dJ_k^{-\alpha_k}\|f_0\|_{\boldsymbol{\alpha},\infty}\lesssim R+d$,
\begin{align}\label{eq:tthetainfty}
\sup_{\|f_0\|_{\boldsymbol{\alpha},\infty}\leq R}\|\boldsymbol{\theta}_\infty\|_\infty\lesssim\sup_{\|f_0\|_{\boldsymbol{\alpha},\infty}\leq R}\|\boldsymbol{b}_{\boldsymbol{J},\boldsymbol{q}}(\cdot)^T\boldsymbol{\theta}_\infty\|_\infty=O(1),
\end{align}
by (12.25) of \citep{lschumaker}. An extension of the approximation result for derivatives is given by the following lemma.

\begin{lemma}\label{lem:derivbound}
There exists $C>0$ depending only on $\boldsymbol{\alpha}$, $\boldsymbol{q}$ and $d$ such that for $f_0\in \mathcal{H}^{\boldsymbol{\alpha}}([0,1]^d)$,
\begin{equation*}
\|\boldsymbol{b}_{\boldsymbol{J},\boldsymbol{q}-\boldsymbol{r}}(\cdot)^T\boldsymbol{W}_{\boldsymbol{r}}\boldsymbol{\theta}_\infty-D^{\boldsymbol{r}}f_0\|_\infty\leq C\left(\sum_{k=1}^dJ_k^{-(\alpha_k-r_k)}\left\|D^{(\alpha_k-r_k)\boldsymbol{e}_k}D^{\boldsymbol{r}}f_0\right\|_\infty\right).
\end{equation*}
\end{lemma}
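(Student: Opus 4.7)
The plan is to first use \eqref{eq:derivspline} to identify
\[
\boldsymbol{b}_{\boldsymbol{J},\boldsymbol{q}-\boldsymbol{r}}(\cdot)^T \boldsymbol{W}_{\boldsymbol{r}} \boldsymbol{\theta}_\infty \;=\; D^{\boldsymbol{r}} g, \qquad g(\boldsymbol{x}) = \boldsymbol{b}_{\boldsymbol{J},\boldsymbol{q}}(\boldsymbol{x})^T \boldsymbol{\theta}_\infty,
\]
so that the lemma reduces to a \emph{simultaneous approximation} estimate for the spline $g$ that already approximates $f_0$ uniformly through \eqref{eq:tapprox}. The crucial additional input is that the specific coefficient vector $\boldsymbol{\theta}_\infty$ supplied by (12.37) of \citep{lschumaker} arises from a tensor-product quasi-interpolant $\mathcal{Q} = \mathcal{Q}_1 \circ \dotsm \circ \mathcal{Q}_d$, where each $\mathcal{Q}_k$ is a univariate quasi-interpolation onto B-splines of order $q_k$ in the $k$-th variable, with a sup-norm operator bound depending only on $q_k$.

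Next, I would apply the standard telescoping identity
\[
f_0 - \mathcal{Q} f_0 \;=\; \sum_{k=1}^d \mathcal{Q}_1 \circ \dotsm \circ \mathcal{Q}_{k-1} \circ (I - \mathcal{Q}_k) \, f_0,
\]
and act on both sides by $D^{\boldsymbol{r}} = D^{r_1 \boldsymbol{e}_1} \circ \dotsm \circ D^{r_d \boldsymbol{e}_d}$. Since each $D^{r_j \boldsymbol{e}_j}$ operates only on the $j$-th variable, it commutes with $\mathcal{Q}_\ell$ for $\ell \neq j$; for $\ell = j$, the derivative rules (15)--(16) of Chapter X in \citep{deBoor} show that $D^{r_j \boldsymbol{e}_j} \mathcal{Q}_j$ is itself a bounded quasi-interpolation-type operator into the space of B-splines of order $q_j - r_j$. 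These composition operators are uniformly sup-norm bounded with constants depending only on $\boldsymbol{q}$ and $d$.

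Applying these bounds term by term, the $k$-th summand becomes bounded by a constant times $\|D^{r_k \boldsymbol{e}_k}(I - \mathcal{Q}_k) h_k\|_\infty$ with $h_k = D^{\boldsymbol{r} - r_k \boldsymbol{e}_k} f_0$, which is a function with smoothness $\alpha_k$ in the $k$-th variable by the anisotropic H\"older assumption (the condition $\sum_k r_k/\alpha_k < 1$ ensuring that the needed mixed derivatives exist and are controlled by $\|f_0\|_{\boldsymbol{\alpha},\infty}$). The classical univariate simultaneous approximation result for Schumaker's quasi-interpolant (Theorems 6.21 and 6.27 of \citep{lschumaker}) then yields
\[
\|D^{r_k \boldsymbol{e}_k}(h_k - \mathcal{Q}_k h_k)\|_\infty \;\lesssim\; J_k^{-(\alpha_k - r_k)} \bigl\|\partial_{x_k}^{\alpha_k} h_k\bigr\|_\infty \;=\; J_k^{-(\alpha_k - r_k)} \bigl\|D^{(\alpha_k - r_k)\boldsymbol{e}_k} D^{\boldsymbol{r}} f_0\bigr\|_\infty,
\]
and summing over $k$ produces the claimed bound.

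The main obstacle is the bookkeeping of the tensor-product structure, specifically verifying that composing the univariate quasi-interpolants with the cross-direction derivatives preserves uniform sup-norm boundedness with constants depending only on $\boldsymbol{q}$ and $d$, and that the reduction of the $k$-th term to a purely one-dimensional approximation problem in variable $x_k$ goes through cleanly. The univariate estimate at integer smoothness $\alpha_k$ is standard, but propagating it through the tensor product and the differentiation indices requires care to ensure that the intermediate operators $\mathcal{Q}_1 \circ \dotsm \circ \mathcal{Q}_{k-1}$ preserve the sup norm of $D^{r_k \boldsymbol{e}_k}(I - \mathcal{Q}_k) h_k$.
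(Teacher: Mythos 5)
Your proposal is correct in outline but takes a genuinely different route from the paper. You telescope the error of the tensor-product quasi-interpolant over the $d$ coordinate directions, $I-\mathcal{Q}=\sum_{k}\mathcal{Q}_1\dotsm\mathcal{Q}_{k-1}(I-\mathcal{Q}_k)$, commute the cross-direction derivatives past the univariate operators, and reduce the $k$-th term to a one-dimensional simultaneous-approximation estimate for $h_k=D^{\boldsymbol{r}-r_k\boldsymbol{e}_k}f_0$. The paper instead works with the single tensor-product quasi-interpolant $Q$ built from the dual functionals $\lambda_{j_1,\dotsc,j_d}$ and invokes Theorem 13.20 of Schumaker to produce, on each cell $I_{j_1,\dotsc,j_d}$, a tensor-product Taylor polynomial $p_{j_1,\dotsc,j_d}$ satisfying the anisotropic local bound for $D^{\boldsymbol{r}}(f_0-p_{j_1,\dotsc,j_d})$ simultaneously in all directions; polynomial reproduction and the local sup-norm boundedness of $Q$ (equations (12.30)--(12.31) of Schumaker) then give the estimate in one step, with no telescoping and no need to track compositions of univariate operators. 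The paper's route is shorter because the multivariate local Taylor theorem already packages the anisotropic bookkeeping; your route is more elementary in that it only uses univariate approximation theory, but it shifts the burden onto the claim that each $D^{r_j\boldsymbol{e}_j}\mathcal{Q}_j$ is sup-norm stable relative to $D^{r_j\boldsymbol{e}_j}$ uniformly in the mesh. That stability is true for Schumaker's local quasi-interpolants on quasi-uniform knots (local polynomial reproduction plus a Markov inequality on each cell), but it is the one step you assert rather than prove, and it is exactly where your argument would need to be fleshed out to be complete; you correctly flag it as the main obstacle.
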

\begin{proof}
Let $I_{j_1,\dotsc,j_d}=\prod_{k=1}^d[t_{k,j_k-q_k},t_{k,j_k}]$. Define a bounded linear operator
$Qf(\boldsymbol{x})=\sum_{j_1=1}^{J_1}\dotsi\sum_{j_d=1}^{J_d}(\lambda_{j_1,\dotsc,j_d}f)\prod_{k=1}^dB_{j_k,q_k}(x_k)$
on $\mathcal{H}^{\boldsymbol{\alpha}}([0,1]^d)$,
where $\lambda_{j_1,\dotsc,j_d}=\prod_{k=1}^d\lambda_{j_k}$ and $\lambda_{j_k}$ is the dual basis of $B_{j_k,q_k}(\cdot)$, i.e., $\lambda_{j_k}$ is a linear functional such that $\lambda_{i_k}B_{j_k,q_k}(\cdot)=\Ind_{\{i_k=j_k\}}(\cdot)$ for $k=1,\dotsc,d$ (see Section 4.6 of \citep{lschumaker}). Using Theorem 13.20 of \citep{lschumaker}, there exists a tensor-product Taylor's polynomial $p_{j_1,\dotsc,j_d}(\boldsymbol{x})$ such that
\begin{equation*}
\|D^{\boldsymbol{r}}(f_0-p_{j_1,\dotsc,j_d})|_{I_{j_1,\dotsc,j_d}}\|_\infty\leq C\sum_{k=1}^dJ_k^{-(\alpha_k-r_k)}\left\|D^{(\alpha_k-r_k)\boldsymbol{e}_k}D^{\boldsymbol{r}}f_0
\middle|_{I_{j_1,\dotsc,j_d}}\right\|_\infty,
\end{equation*}
where $f|_{I_{j_1,\dotsc,j_d}}$ is the restriction of $f$ onto $I_{j_1,\dotsc,j_d}$ and $C>0$ depends only on $\boldsymbol{\alpha}$, $\boldsymbol{q}$ and $d$. By equations (12.30) and (12.31) of Theorem 12.6 in \citep{lschumaker}, $\|(D^{\boldsymbol{r}}f_0-QD^{\boldsymbol{r}}f_0)|_{I_{j_1,\dotsc,j_d}}\|_\infty$ is bounded above by
\begin{align*}
&\|D^{\boldsymbol{r}}(f_0-p_{j_1,\dotsc,j_d})|_{I_{j_1,\dotsc,j_d}}\|_\infty+\|Q(D^{\boldsymbol{r}}f_0-D^{\boldsymbol{r}}p_{j_1,\dotsc,j_d})|_{I_{j_1,\dotsc,j_d}}\|_\infty\\
&\qquad\leq C\|D^{\boldsymbol{r}}(f_0-p_{j_1,\dotsc,j_d})|_{I_{j_1,\dotsc,j_d}}\|_\infty\\
&\qquad\leq C\sum_{k=1}^dJ_k^{-(\alpha_k-r_k)}\left\|D^{(\alpha_k-r_k)\boldsymbol{e}_k}D^{\boldsymbol{r}}f_0
\middle|_{I_{j_1,\dotsc,j_d}}\right\|_\infty.
\end{align*}
Since $QD^{\boldsymbol{r}}f_0=D^{\boldsymbol{r}}Qf_0$, identifying $(\boldsymbol{\theta}_\infty)_{j_1,\dotsc,j_d}$ from \eqref{eq:tapprox} with $\lambda_{j_1,\dotsc,j_d}f_0$ and applying equations (15) and (16) of Chapter X in \citep{deBoor}, we see that $QD^{\boldsymbol{r}}f_0= \boldsymbol{b}_{\boldsymbol{J},\boldsymbol{q}-\boldsymbol{r}}(\cdot)^T\boldsymbol{W}_{\boldsymbol{r}}\boldsymbol{\theta}_\infty$. Now sum both sides over $1\leq j_k\leq J_k, k=1,\dotsc,d$.
\end{proof}

\begin{proof}[Proof of Proposition \ref{th:tsigma2con}]
Define $\boldsymbol{U}=(\boldsymbol{B\Omega B}^T+\boldsymbol{I}_n)^{-1}$ and $J=\prod_{k=1}^dJ_k$. By equation (33) of page 355 in \citep{expectquad},
\begin{align}
|\mathrm{E}_0(\widehat{\sigma}_n^2)-\sigma_0^2|&= |n^{-1}\sigma_0^2\mathrm{tr}(\boldsymbol{U})-\sigma_0^2|+n^{-1}(\boldsymbol{F}_0-\boldsymbol{B\eta})^T
\boldsymbol{U}(\boldsymbol{F}_0-\boldsymbol{B\eta})\nonumber\\
&\lesssim n^{-1}[\mathrm{tr}(\boldsymbol{I}_n-\boldsymbol{U})+(\boldsymbol{F}_0-\boldsymbol{B\theta}_\infty)^T
\boldsymbol{U}(\boldsymbol{F}_0-\boldsymbol{B\theta}_\infty)\nonumber\\
&\qquad+(\boldsymbol{B\theta}_\infty-\boldsymbol{B\eta})^T
\boldsymbol{U}(\boldsymbol{B\theta}_\infty-\boldsymbol{B\eta})]\label{eq:tsigma0bias},
\end{align}
where we used $(\boldsymbol{x}+\boldsymbol{y})^T\boldsymbol{D}(\boldsymbol{x}+\boldsymbol{y})\leq 2\boldsymbol{x}^T\boldsymbol{Dx}+2\boldsymbol{y}^T\boldsymbol{Dy}$ for any $\boldsymbol{D}\geq\boldsymbol{0}$. Let $\boldsymbol{P}_{\boldsymbol{B}}=\boldsymbol{B}(\boldsymbol{B}^T\boldsymbol{B})^{-1}\boldsymbol{B}^T$. Let $\boldsymbol{A}$ be an $m\times m$ matrix, $\boldsymbol{C}$ an $m\times r$ matrix, $\boldsymbol{T}$ an $r\times r$ matrix, and $\boldsymbol{W}$ an $r\times m$ matrix, with $\boldsymbol{A}$ and $\boldsymbol{T}$ invertible. Then by  the binomial inverse theorem [Theorem 18.2.8 of \citep{davidmatrix}]
\begin{align}\label{eq:bit}
(\boldsymbol{A}+\boldsymbol{CTW})^{-1}=\boldsymbol{A}^{-1}-\boldsymbol{A}^{-1}\boldsymbol{C}(\boldsymbol{T}^{-1}+\boldsymbol{WA}^{-1}\boldsymbol{C})^{-1}\boldsymbol{WA}^{-1}.
\end{align}
Therefore, two applications of \eqref{eq:bit} to $\boldsymbol{U}$ yield
\begin{align}\label{eq:tbiasinter}
(\boldsymbol{B\Omega B}^T+\boldsymbol{I}_n)^{-1}&=\boldsymbol{I}_n-\boldsymbol{B}
(\boldsymbol{B}^T\boldsymbol{B}+\boldsymbol{\Omega}^{-1})^{-1}\boldsymbol{B}^T=\boldsymbol{I}_n-\boldsymbol{P}_{\boldsymbol{B}}
+\boldsymbol{V},
\end{align}
where $\boldsymbol{V}=\boldsymbol{B}(\boldsymbol{B}^T\boldsymbol{B})^{-1}
[\boldsymbol{\Omega}+(\boldsymbol{B}^T\boldsymbol{B})^{-1}]^{-1}(\boldsymbol{B}^T\boldsymbol{B})^{-1}\boldsymbol{B}^T
\geq\boldsymbol{0}$. Hence the first term in \eqref{eq:tsigma0bias} is
\begin{align}
n^{-1}\mathrm{tr}(\boldsymbol{P}_{\boldsymbol{B}}-\boldsymbol{V})&\leq n^{-1}\mathrm{tr}(\boldsymbol{P}_{\boldsymbol{B}})
=J/n.\label{eq:tsigma0biasbound1}
\end{align}
Note $\boldsymbol{U}\leq\boldsymbol{I}_n$ since $\boldsymbol{B\Omega B}^T\geq\boldsymbol{0}$, and the second term in \eqref{eq:tsigma0bias} is bounded by
\begin{align}
{n}^{-1}\|\boldsymbol{U}\|_{(2,2)}\|\boldsymbol{F}_0-\boldsymbol{B\theta}_\infty\|^2
\leq\|\boldsymbol{F}_0-\boldsymbol{B\theta}_\infty\|_\infty^2
\lesssim\sum_{k=1}^dJ_k^{-2\alpha_k},\label{eq:tsigma0biasbound2}
\end{align}
in view of \eqref{eq:tapprox}. By \eqref{eq:tbiasinter} and $(\boldsymbol{I}-\boldsymbol{P}_{\boldsymbol{B}})\boldsymbol{B}=\boldsymbol{0}$, the last term in \eqref{eq:tsigma0bias} is $n^{-1}(\boldsymbol{\theta}_\infty-\boldsymbol{\eta})^T[\boldsymbol{\Omega}+(\boldsymbol{B}^T\boldsymbol{B})^{-1}]^{-1}
(\boldsymbol{\theta}_\infty-\boldsymbol{\eta})$, which is bounded above by
\begin{align}
{n}^{-1}\left(c_1+{C_2}^{-1}{J}/{n}\right)^{-1}J\|\boldsymbol{\theta}_\infty-\boldsymbol{\eta}\|_\infty^2
\lesssim {J}/{n},\label{eq:tsigma0biasbound3}
\end{align}
where we used \eqref{assump:tprior} and \eqref{eq:tdiagonal} to bound the maximum eigenvalue of $[\boldsymbol{\Omega}+(\boldsymbol{B}^T\boldsymbol{B})^{-1}]^{-1}$. By \eqref{eq:tthetainfty} and assumption on the prior, $\|\boldsymbol{\theta}_\infty-\boldsymbol{\eta}\|_\infty^2=O(1)$. Combining the bounds in \eqref{eq:tsigma0biasbound1}, \eqref{eq:tsigma0biasbound2} and \eqref{eq:tsigma0biasbound3} into \eqref{eq:tsigma0bias}, we obtain
$|\mathrm{E}_0(\widehat{\sigma}_n^2)-\sigma_0^2|\lesssim J/n+\sum_{k=1}^dJ_k^{-2\alpha_k}$.

Let $\boldsymbol{Y}=\boldsymbol{F}_0+\boldsymbol{\varepsilon}$ and write $n\widehat{\sigma}_n^2=(\boldsymbol{F}_0-\boldsymbol{B\eta})^T\boldsymbol{U}(\boldsymbol{F}_0-\boldsymbol{B\eta})+2(\boldsymbol{F}_0-\boldsymbol{B\eta})^T\boldsymbol{U\varepsilon}
+\boldsymbol{\varepsilon}^T\boldsymbol{U\varepsilon}$. Using the fact $\mathrm{Var}(T_1+T_2)\leq2\mathrm{Var}(T_1)+2\mathrm{Var}(T_2)$, it follows that $\mathrm{Var}_0(\widehat{\sigma}_n^2)$ is bounded up to a constant multiple by
\begin{align}\label{eq:tsigma0var}
&n^{-2}[(\boldsymbol{F}_0-\boldsymbol{B\theta}_\infty)^T\boldsymbol{U}^2(\boldsymbol{F}_0-\boldsymbol{B\theta}_\infty)\nonumber\\
&\qquad+(\boldsymbol{B\theta}_\infty-\boldsymbol{B\eta})^T\boldsymbol{U}^2(\boldsymbol{B\theta}_\infty-\boldsymbol{B\eta})
+\mathrm{Var}_0(\boldsymbol{\varepsilon}^T\boldsymbol{U}\boldsymbol{\varepsilon})].
\end{align}
In view of \eqref{eq:tapprox} and $\boldsymbol{U}\leq\boldsymbol{I}_n$, the first term above is bounded by
\begin{align}
{n}^{-2}\|\boldsymbol{U}\|_{(2,2)}^2\|\boldsymbol{F}_0-\boldsymbol{B\theta}_\infty\|^2
\leq {n}^{-1}\|\boldsymbol{F}_0-\boldsymbol{B\theta}_\infty\|_\infty^2\lesssim {n}^{-1}\sum_{k=1}^dJ_k^{-2\alpha_k}.\label{eq:tsigma0varbound1}
\end{align}
By the idempotency of $\boldsymbol{I}_n-\boldsymbol{P}_{\boldsymbol{B}}$ and $(\boldsymbol{I}_n-\boldsymbol{P}_{\boldsymbol{B}})\boldsymbol{B}=\boldsymbol{0}$, we have that $\boldsymbol{B}^T(\boldsymbol{I}_n-\boldsymbol{P}_{\boldsymbol{B}}
+\boldsymbol{V})^2\boldsymbol{B}$ is
\begin{align*}
\boldsymbol{B}^T\boldsymbol{V}^2\boldsymbol{B}&=[\boldsymbol{\Omega}+(\boldsymbol{B}^T\boldsymbol{B})^{-1}]^{-1}
(\boldsymbol{B}^T\boldsymbol{B})^{-1}[\boldsymbol{\Omega}+(\boldsymbol{B}^T\boldsymbol{B})^{-1}]^{-1}\\
&\leq[\boldsymbol{\Omega}+(\boldsymbol{B}^T\boldsymbol{B})^{-1}]^{-1}\leq\boldsymbol{B}^T\boldsymbol{B}.
\end{align*}
Therefore, in view of \eqref{eq:tbiasinter}, the second term in \eqref{eq:tsigma0var} is bounded by
\begin{align}
(\boldsymbol{\theta}_\infty-\boldsymbol{\eta})^T\boldsymbol{B}^T\boldsymbol{B}(\boldsymbol{\theta}_\infty-\boldsymbol{\eta})/n^2
\leq  J\|\boldsymbol{B}^T\boldsymbol{B}\|_{(2,2)}\|\boldsymbol{\theta}_\infty-\boldsymbol{\eta}\|_\infty^2/n
\lesssim n^{-1},\label{eq:tsigma0varbound2}
\end{align}
where we used \eqref{eq:tdiagonal} to bound $\|\boldsymbol{B}^T\boldsymbol{B}\|_{(2,2)}$, while $\|\boldsymbol{\theta}_\infty-\boldsymbol{\eta}\|_\infty^2$ is bounded using \eqref{eq:tthetainfty} and the assumption on the prior. By Lemma \ref{lem:tvarquad}, the last term in \eqref{eq:tsigma0var} is $O(n^{-1})$. Combining this with the bounds established in \eqref{eq:tsigma0varbound1} and \eqref{eq:tsigma0varbound2} into \eqref{eq:tsigma0var}, we obtain $\mathrm{Var}_0(\widehat{\sigma}_n^2)\lesssim n^{-1}$. If $J_k\asymp n^{\alpha^{*}/\{\alpha_k(2\alpha^{*}+d)\}}$ for $k=1,\dotsc,n$, the mean square error is
\begin{align}\label{eq:sigma2mse}
\mathrm{E}_0(\widehat{\sigma}_n^2-\sigma_0^2)^2\lesssim {n}^{-1}+ J^2 n^{-2}
+\sum_{k=1}^dJ_k^{-4\alpha_k}\lesssim {n}^{-1}+n^{-{4\alpha^{*}}/{(2\alpha^{*}+d)}},
\end{align}
which implies the first assertion.

For the assertion (b), observe that
\begin{eqnarray*}
\mathrm{E}(\sigma^2|\boldsymbol{Y})&=& {\beta_2}(\beta_1+n-2)^{-1}+{n}(\beta_1+n-2)^{-1}\widehat{\sigma}_n^2,\nonumber\\
\mathrm{Var}(\sigma^2|\boldsymbol{Y})&=& {4}(\beta_1+n-4)^{-1}({\beta_2}(\beta_1+n-2)^{-1}
+{n}(\beta_1+n-2)^{-1}\widehat{\sigma}_n^2)^2.
\end{eqnarray*}
Applying Markov's inequality, the posterior for $\sigma^2$ is seen to concentrate around $\widehat{\sigma}_n^2$ at the rate $n^{-1/2}$, so the assertion follows from (a).

Assertion (c) can be concluded from an anisotropic extension of the estimates obtained in the proof of Theorem~4.1 together with Theorem~A.1 of \citep{dejonge2013}. Indeed the posterior contracts at the rate $n^{-\alpha^*/(2\alpha^*+d)}$, and actually at the rate $n^{-1/2}$ for $\alpha^*>d/2$ by an anisotropic extension of their Theorem~4.1. Consistency can also be approached directly from the marginal model $p_{n,\sigma}$ for $\boldsymbol{Y}$ given $\sigma$, where $f$ is integrated out, by a Schwartz-type  posterior consistency argument using the test $|\widehat{\sigma}_n-\sigma_0|>\epsilon$, which is consistent at the true density
$p_{0,n}$ by part (a). The only departure from Schwartz's argument is that in the present case it is convenient to directly establish that for any $c>0$,
$e^{cn} \int ({p_{n,\sigma}}/{p_{0,n}}) d\Pi(\sigma)\to \infty$ in probability under $p_{0,n}$ using the consistency of $\widehat{\sigma}_n$ at $\sigma_0$.
\end{proof}

We write $\mathcal{U}_n$ for a shrinking neighborhood of $\sigma_0$ such that
with probability tending to one, $\widehat{\sigma}_n\in \mathcal{U}_n$ and  $\Pi(\sigma\in \mathcal{U}_n|\boldsymbol{Y})\to 1$.
We write $D^{\boldsymbol{r}}\widetilde{f}$ for $\mathrm{E}(D^{\boldsymbol{r}}f|\boldsymbol{Y})= \boldsymbol{A}_{\boldsymbol{r}}\boldsymbol{Y}+\boldsymbol{c}_{\boldsymbol{r}}\boldsymbol{\eta}$. Recall that $\epsilon_{n,\boldsymbol{r}}=n^{-\alpha^{*}\{1-\sum_{k=1}^d(r_k/\alpha_k)\}/(2\alpha^{*}+d)}$ and $\epsilon_{n,\boldsymbol{r},\infty}=(\log{n}/n)^{\alpha^{*}\{1-\sum_{k=1}^d(r_k/\alpha_k)\}/(2\alpha^{*}+d)}$.

\begin{proof}[Proof of Theorem \ref{prop:tsupnormfprime}] Recall that at $\boldsymbol{x}\in[0,1]^d$, $(D^{\boldsymbol{r}}f(\boldsymbol{x})|\boldsymbol{Y},\sigma)\sim\mathrm{N}(D^{\boldsymbol{r}}\widetilde{f}(\boldsymbol{x}),\sigma^2\Sigma_{\boldsymbol{r}}(\boldsymbol{x},\boldsymbol{x}))$, with $\Sigma_{\boldsymbol{r}}(\boldsymbol{x},\boldsymbol{x})$ given in \eqref{eq:tsigmar}.
Under $P_0$, $D^{\boldsymbol{r}}\widetilde{f}(\boldsymbol{x})$ is a sub-Gaussian variable with mean $\boldsymbol{A}_{\boldsymbol{r}}(\boldsymbol{x})\boldsymbol{F}_0+\boldsymbol{c}_{\boldsymbol{r}}(\boldsymbol{x})\boldsymbol{\eta}$ and variance  $\sigma_0^2\Psi_{\boldsymbol{r}}(\boldsymbol{x},\boldsymbol{x})$, where $\Psi_{\boldsymbol{r}}(\boldsymbol{x},\boldsymbol{y})$ is
\begin{align*}
\boldsymbol{b}_{\boldsymbol{J},\boldsymbol{q}-\boldsymbol{r}}(\boldsymbol{x})^T\boldsymbol{W}_{\boldsymbol{r}}\left(\boldsymbol{B}^T\boldsymbol{B}
+\boldsymbol{\Omega}^{-1}\right)^{-1}\boldsymbol{B}^T\boldsymbol{B}\left(\boldsymbol{B}^T\boldsymbol{B}
+\boldsymbol{\Omega}^{-1}\right)^{-1}\boldsymbol{W}_{\boldsymbol{r}}^T\boldsymbol{b}_{\boldsymbol{J},\boldsymbol{q}-\boldsymbol{r}}(\boldsymbol{y}).
\end{align*}
Note that the posterior variance $\sigma^2\Sigma_{\boldsymbol{r}}(\boldsymbol{x},\boldsymbol{x})$ of $D^{\boldsymbol{r}} f$ does not depend on $\boldsymbol{Y}$, while  $D^{\boldsymbol{r}}\widetilde{f}(\boldsymbol{x})$ does not depend on $\sigma$. Therefore uniformly on $\|f_0\|_{\boldsymbol{\alpha},\infty}\leq R$,
\begin{align}\label{eq:tMarkov}
&\mathrm{E}_0\sup_{\sigma\in\mathcal{U}_n}\mathrm{E}([D^{\boldsymbol{r}}f(\boldsymbol{x})-D^{\boldsymbol{r}}f_0(\boldsymbol{x})]^2|\boldsymbol{Y},\sigma)\nonumber\\
&\qquad =\sup_{\sigma\in\mathcal{U}_n}\mathrm{E}([D^{\boldsymbol{r}}f(\boldsymbol{x})-D^{\boldsymbol{r}}\widetilde{f}(\boldsymbol{x})]^2|\sigma)
+\mathrm{E}_0[D^{\boldsymbol{r}}\widetilde{f}(\boldsymbol{x}) -D^{\boldsymbol{r}}f_0(\boldsymbol{x})]^2\nonumber\\
&\qquad=\sup_{\sigma\in\mathcal{U}_n}\sigma^2\Sigma_{\boldsymbol{r}}(\boldsymbol{x},\boldsymbol{x})
+\sigma_0^2\Psi_{\boldsymbol{r}}(\boldsymbol{x},\boldsymbol{x})+[\mathrm{E}_0D^{\boldsymbol{r}}\widetilde{f}(\boldsymbol{x})-D^{\boldsymbol{r}}f_0(\boldsymbol{x})]^2.
\end{align}
To bound $\sigma^2\Sigma_{\boldsymbol{r}}(\boldsymbol{x},\boldsymbol{x})$, first observe that $\|\boldsymbol{b}_{\boldsymbol{J},\boldsymbol{q}-\boldsymbol{r}}(\boldsymbol{x})\|^2$ is bounded by
\begin{equation}\label{eq:tbnorm2}
\prod_{k=1}^d\max_{1\leq j_k\leq J_k}B_{j_k,q_k-r_k}(x_k)\sum_{j_1=1}^{J_1-r_1}\dotsi\sum_{j_d=1}^{J_d-r_d}B_{j_k,q_k-r_k}(x_k)\leq1.
\end{equation}
In view of \eqref{eq:firstdiff}, each row of $\boldsymbol{W}_{\boldsymbol{r}}$ has $\prod_{k=1}^d(r_k+1)$ nonzero entries and each column has at most $\prod_{k=1}^d(r_k+1)$ nonzero entries. Then by Lemmas \ref{lem:Wr} and \ref{lem:tasymp}, each of these nonzero entries is of the order $\prod_{k=1}^d\Delta_k^{-r_k}\asymp\prod_{k=1}^dJ_k^{r_k}$. Hence, both $\|\boldsymbol{W}_{\boldsymbol{r}}\|_{(\infty,\infty)}$ and $\|\boldsymbol{W}^T_{\boldsymbol{r}}\|_{(\infty,\infty)}$ are $O(\prod_{k=1}^dJ_k^{r_k})$. Thus,
\begin{align}
\|\boldsymbol{W}_{\boldsymbol{r}}^T\boldsymbol{W}_{\boldsymbol{r}}\|_{(2,2)}&\leq\|\boldsymbol{W}_{\boldsymbol{r}}^T
\boldsymbol{W}_{\boldsymbol{r}}\|_{(\infty,\infty)}\lesssim\prod_{k=1}^dJ_k^{2r_k}.\label{eq:tww}
\end{align}
By the Cauchy-Schwarz inequality, \eqref{eq:tbnorm2}, \eqref{eq:tww} and \eqref{eq:teigen},  $\sigma^2\Sigma_{\boldsymbol{r}}(\boldsymbol{x},\boldsymbol{x})$ over $\sigma\in\mathcal{U}_n$ is uniformly bounded by
\begin{align}
&(\sigma_0^2+o(1))\|\boldsymbol{b}_{\boldsymbol{J},\boldsymbol{q}-\boldsymbol{r}}(\boldsymbol{x})\|^2
\|\boldsymbol{W}_{\boldsymbol{r}}^T\boldsymbol{W}_{\boldsymbol{r}}\|_{(2,2)}\
\left\|\left(\boldsymbol{B}^T\boldsymbol{B}+\boldsymbol{\Omega}^{-1}\right)^{-1}\right\|_{(2,2)}\nonumber\\
&\qquad\lesssim\left(C_1n\prod_{k=1}^dJ_k^{-1}+{c_2}^{-1}\right)^{-1}\left(\prod_{k=1}^dJ_k^{2r_k}\right)
\lesssim {n}^{-1}\prod_{k=1}^dJ_k^{2r_k+1},\label{eq:tvarbound}
\end{align}
Using \eqref{eq:teigen}, \eqref{eq:tdiagonal}, \eqref{eq:tbnorm2} and \eqref{eq:tww},
the variance $\sigma_0^2\Psi_{\boldsymbol{r}}(\boldsymbol{x},\boldsymbol{x})$ of $D^{\boldsymbol{r}}f(\boldsymbol{x})$  is bounded by
\begin{align}
&\sigma_0^2\left\|\left(\boldsymbol{B}^T\boldsymbol{B}+\boldsymbol{\Omega}^{-1}\right)^{-1}\right\|^2_{(2,2)}\|\boldsymbol{B}^T\boldsymbol{B}\|_{(2,2)}
\|\boldsymbol{b}_{\boldsymbol{J},\boldsymbol{q}-\boldsymbol{r}}(\boldsymbol{x})\|^2\|\boldsymbol{W}_{\boldsymbol{r}}^T\boldsymbol{W}_{\boldsymbol{r}}\|_{(2,2)}\nonumber\\
&\qquad\lesssim \left({n}^{-1}\prod_{k=1}^dJ_k\right)^2\left(n\prod_{k=1}^dJ_k^{-1}\right)\left(\prod_{k=1}^dJ_k^{2r_k}\right)
\lesssim {n}^{-1}\prod_{k=1}^dJ_k^{2r_k+1}.\label{eq:tvarbiasbound}
\end{align}
The last term in \eqref{eq:tMarkov} is bounded as
\begin{align*}
&|\mathrm{E}_0D^{\boldsymbol{r}}\widetilde{f}(\boldsymbol{x})-D^{\boldsymbol{r}}f_0(\boldsymbol{x})|\\
&\leq\left|\boldsymbol{b}_{\boldsymbol{J},\boldsymbol{q}-\boldsymbol{r}}(\boldsymbol{x})^T\boldsymbol{W}_{\boldsymbol{r}}\left(\boldsymbol{B}^T\boldsymbol{B}+\boldsymbol{\Omega}^{-1}\right)^{-1}\left(\boldsymbol{B}^T\boldsymbol{F}_0+\boldsymbol{\Omega}^{-1}\boldsymbol{\eta}\right)-\boldsymbol{b}_{\boldsymbol{J},\boldsymbol{q}-\boldsymbol{r}}(\boldsymbol{x})^T\boldsymbol{W}_{\boldsymbol{r}}\boldsymbol{\theta}_\infty\right|\\
&\qquad+|D^{\boldsymbol{r}}f_0(\boldsymbol{x})-\boldsymbol{b}_{\boldsymbol{J},\boldsymbol{q}-\boldsymbol{r}}(\boldsymbol{x})^T\boldsymbol{W}_{\boldsymbol{r}}\boldsymbol{\theta}_\infty|.
\end{align*}
By bounding the second term using Lemma \ref{lem:derivbound} and using  $\|\boldsymbol{b}_{\boldsymbol{J},\boldsymbol{q}-\boldsymbol{r}}(\boldsymbol{x})\|_1=1$, the right hand side above, up to $O(\sum_{k=1}^dJ_k^{-(\alpha_k-r_k)})$, is bounded by
\begin{align*}
&\left|\boldsymbol{b}_{\boldsymbol{J},\boldsymbol{q}-\boldsymbol{r}}(\boldsymbol{x})^T\boldsymbol{W}_{\boldsymbol{r}}\left(\boldsymbol{B}^T\boldsymbol{B}+\boldsymbol{\Omega}^{-1}\right)^{-1}\left[\boldsymbol{B}^T(\boldsymbol{F}_0-\boldsymbol{B}\boldsymbol{\theta}_\infty)+\boldsymbol{\Omega}^{-1}(\boldsymbol{\eta}-\boldsymbol{\theta}_\infty)\right]\right|\\
&\leq\left\|\left(\boldsymbol{B}^T\boldsymbol{B}+\boldsymbol{\Omega}^{-1}\right)^{-1}\right\|_{(\infty,\infty)}\|\boldsymbol{W}_{\boldsymbol{r}}\|_{(\infty,\infty)}\Big\{\|\boldsymbol{B}^T(\boldsymbol{F}_0-\boldsymbol{B}\boldsymbol{\theta}_\infty)\|_\infty\\
&\qquad+\left\|\boldsymbol{\Omega}^{-1}\right\|_{(\infty,\infty)}(\|\boldsymbol{\theta}_\infty\|_\infty+\|\boldsymbol{\eta}\|_\infty)\Big\}.
\end{align*}
Since $\boldsymbol{\Omega}^{-1}$ is $\boldsymbol{m}$-banded with fixed $\boldsymbol{m}$ and has uniformly bounded entries,  $\|\boldsymbol{\Omega}^{-1}\|_{(\infty,\infty)}=O(1)$. As  $\boldsymbol{B}^T\boldsymbol{B}$ is $\boldsymbol{q}$-banded, Lemma \ref{lem:tband} and \eqref{eq:teigen} imply that $\|(\boldsymbol{B}^T\boldsymbol{B}+\boldsymbol{\Omega}^{-1})^{-1}\|_{(\infty,\infty)}\lesssim n^{-1}\prod_{k=1}^dJ_k$. By \eqref{eq:tww}, we have $\|\boldsymbol{W}_{\boldsymbol{r}}\|_{(\infty,\infty)}\lesssim\prod_{k=1}^dJ_k^{r_k}$. Also, $\|\boldsymbol{\theta}_\infty\|_\infty$ and $\|\boldsymbol{\eta}\|_\infty$ are both $O(1)$ by \eqref{eq:tthetainfty} and the assumption on the prior. Using the non-negativity of B-splines, Lemma \ref{lem:tsumB} and \eqref{eq:tapprox}, uniformly on  $\|f_0\|_{\boldsymbol{\alpha},\infty}\leq R$, we bound $\|\boldsymbol{B}^T(\boldsymbol{F}_0-\boldsymbol{B}\boldsymbol{\theta}_\infty)\|_\infty$ by
\begin{align*}
&\max_{1\leq j_k\leq J_k,k=1,\dotsc,d}\sum_{i=1}^n\prod_{k=1}^dB_{j_k,q_k}(X_{ik})|f_0(\boldsymbol{X}_i)-\boldsymbol{b}_{\boldsymbol{J},\boldsymbol{q}}(\boldsymbol{X}_i)^T\boldsymbol{\theta}_\infty|\\
&\quad\lesssim\sum_{k=1}^dJ_k^{-\alpha_k}\max_{1\leq j_k\leq J_k,k=1,\dotsc,d}\sum_{i=1}^n\prod_{k=1}^dB_{j_k,q_k}(X_{ik})\lesssim n\sum_{k=1}^dJ_k^{-\alpha_k}\left(\prod_{k=1}^dJ_k^{-1}\right).
\end{align*}
Therefore, combining the bounds obtained and squaring the bias of $D^{\boldsymbol{r}}\widetilde{f}$, we have for any $\boldsymbol{x}\in[0,1]^d$ uniformly on $\|f_0\|_{\boldsymbol{\alpha},\infty}\leq R$,
\begin{align}\label{eq:tbiasbound}
|\mathrm{E}_0D^{\boldsymbol{r}}\widetilde{f}(\boldsymbol{x})-D^{\boldsymbol{r}}f_0(\boldsymbol{x})|^2
\lesssim\prod_{k=1}^dJ_k^{2r_k}\left(n^{-2}\prod_{k=1}^dJ_k^2+\sum_{k=1}^dJ_k^{-2\alpha_k}\right).
\end{align}
Let $P_{n,\boldsymbol{r}}(\boldsymbol{x})=\mathrm{E}_0\sup_{\sigma\in\mathcal{U}_n}\mathrm{E}([D^{\boldsymbol{r}}f(\boldsymbol{x})-D^{\boldsymbol{r}}f_0(\boldsymbol{x})]^2|\boldsymbol{Y},\sigma)$. Combining \eqref{eq:tvarbound}, \eqref{eq:tvarbiasbound} and \eqref{eq:tbiasbound} into \eqref{eq:tMarkov},
\begin{align}
\sup_{\|f_0\|_{\boldsymbol{\alpha},\infty}\leq R}P_{n,\boldsymbol{r}}(\boldsymbol{x})&\lesssim\frac{1}{n}\prod_{k=1}^dJ_k^{2r_k+1}+\prod_{k=1}^dJ_k^{2r_k}\left(\frac{1}{n^2}\prod_{k=1}^dJ_k^2+\sum_{k=1}^dJ_k^{-2\alpha_k}\right)\nonumber\\
&\lesssim\prod_{k=1}^dJ_k^{2r_k}\left(\frac{1}{n}\prod_{k=1}^dJ_k+\sum_{k=1}^dJ_k^{-2\alpha_k}\right),\label{eq:tbound}
\end{align}
since $\prod_{k=1}^dJ_k\leq n$ by the assumption. To balance the orders of the two terms on the right, let $J_k=J^{1/\alpha_k}$ for $k=1,\dotsc,d$. Then the right hand side of \eqref{eq:tbound} reduces to
$O(J^{\sum_{k=1}^d(2r_k+1)/\alpha_k}/n)+O(J^{2(\sum_{k=1}^dr_k/\alpha_k-1)}).$
They will have the same order if $J\asymp n^{\alpha^{*}/(2\alpha^{*}+d)}$, and $J_k=J^{1/\alpha_k}\asymp n^{\alpha^{*}/\{\alpha_k(2\alpha^{*}+d)\}}$ for $k=1,\dotsc,d$. Hence, $P_{n,\boldsymbol{r}}(\boldsymbol{x})=O(\epsilon_{n,\boldsymbol{r}}^2)$ uniformly on $\|f_0\|_{\boldsymbol{\alpha},\infty}\leq R$, implying the first assertion.

For the hierarchical Bayes procedure, the assertion similarly follows from   $\mathrm{E}_0\Pi(|D^{\boldsymbol{r}}f(\boldsymbol{x})-D^{\boldsymbol{r}}f_0(\boldsymbol{x})|>M_n\epsilon_{n,\boldsymbol{r}}|\boldsymbol{Y})\le
{M_n^{-2}\epsilon_{n,\boldsymbol{r}}^{-2}}{P_{n,\boldsymbol{r}}(\boldsymbol{x})}+\mathrm{E}_0\Pi(\sigma\notin \mathcal{U}_n|\boldsymbol{Y})$.
\end{proof}

\begin{proof}[Proof of Theorem \ref{th:tsupnormfprime}]
Recall that $(D^{\boldsymbol{r}}f|\boldsymbol{Y},\sigma)\sim\mathrm{GP}(D^{\boldsymbol{r}}\widetilde{f},\sigma^2\Sigma_{\boldsymbol{r}})$. Let $Z_{n,\boldsymbol{r}}\sim\mathrm{GP}(0,\Sigma_{\boldsymbol{r}})$. Under the true distribution $P_0$, $D^{\boldsymbol{r}}\widetilde{f}$ is a sub-Gaussian process with mean function $\boldsymbol{A}_{\boldsymbol{r}}\boldsymbol{F}_0+\boldsymbol{c}_{\boldsymbol{r}}\boldsymbol{\eta}$ and covariance function $\sigma_0^2\Psi_{\boldsymbol{r}}$. Let $Q_{n,\boldsymbol{r}}$ be a sub-Gaussian process with mean function $0$ and covariance function $\sigma_0^2\Psi_{\boldsymbol{r}}$. Note that $Z_{n,\boldsymbol{r}}$ does not depend on $\boldsymbol{Y}$ and $f_0$, while $D^{\boldsymbol{r}}\widetilde{f}$ does not depend on $\sigma$. Then uniformly on  $\|f_0\|_{\boldsymbol{\alpha},\infty}\leq R$,
\begin{align}
&\mathrm{E}_0\sup_{\sigma^2\in\mathcal{U}_n}\mathrm{E}(\|D^{\boldsymbol{r}}f-D^{\boldsymbol{r}}f_0\|_\infty^2|\boldsymbol{Y},\sigma)\nonumber\\
&\qquad\lesssim\sup_{\sigma\in\mathcal{U}_n}\mathrm{E}(\|D^{\boldsymbol{r}}f-D^{\boldsymbol{r}}\widetilde{f}\|_\infty^2|\sigma)
+\mathrm{E}_0\|D^{\boldsymbol{r}}\widetilde{f}-D^{\boldsymbol{r}}f_0\|_\infty^2\nonumber\\
&\qquad\lesssim\sup_{\sigma\in\mathcal{U}_n}\sigma^2\mathrm{E}\|Z_{n,\boldsymbol{r}}\|_\infty^2+\mathrm{E}\|Q_{n,\boldsymbol{r}}\|_\infty^2+\|\boldsymbol{A}_{\boldsymbol{r}}\boldsymbol{F}_0+\boldsymbol{c}_{\boldsymbol{r}}\boldsymbol{\eta}-D^{\boldsymbol{r}}f_0\|_\infty^2.\label{eq:tsupinter}
\end{align}
Since $Q_{n,\boldsymbol{r}}=\boldsymbol{A}_{\boldsymbol{r}}\boldsymbol{\varepsilon}$, then by Assumption \ref{assump:tf0}, $Q_{n,\boldsymbol{r}}$ is sub-Gaussian with respect to the semi-metric $d(\boldsymbol{t},\boldsymbol{s})=\sqrt{\mathrm{Var}(Q_{n,\boldsymbol{r}}(\boldsymbol{t})-Q_{n,\boldsymbol{r}}(\boldsymbol{s}))}$. Note that $Z_{n,\boldsymbol{r}}$ and $Q_{n,\boldsymbol{r}}$ satisfy the condition for Lemma \ref{lem:supexpect} by Lemma \ref{lem:tincrement}. Applying Lemma \ref{lem:supexpect} with $p=2$, we have for any $0<\delta_n<1$, $\mathrm{E}\|Z_{n,\boldsymbol{r}}\|_\infty^2\lesssim\log{(1/\delta_n)}(n\delta_n^2+\frac{1}{n}\prod_{k=1}^dJ_k^{2r_k+1})$
in view of \eqref{eq:tvarbound}. Similarly, $\mathrm{E}\|Q_{n,\boldsymbol{r}}\|_\infty^2\lesssim\log{(1/\delta_n)}(n\delta_n^2+\frac{1}{n}\prod_{k=1}^dJ_k^{2r_k+1})$ by \eqref{eq:tvarbiasbound}. Setting $\delta_n^2\asymp n^{-2}\prod_{k=1}^dJ_k^{2r_k+1}$,
\begin{align}\label{eq:tvarsupbound}
\mathrm{E}\|Z_{n,\boldsymbol{r}}\|_\infty^2\lesssim\frac{\log{n}}{n}\prod_{k=1}^dJ_k^{2r_k+1},\quad\mathrm{E}\|Q_{n,\boldsymbol{r}}\|_\infty^2\lesssim\frac{\log{n}}{n}\prod_{k=1}^dJ_k^{2r_k+1}.
\end{align}
Since the bound for \eqref{eq:tbiasbound} is uniform for $\boldsymbol{x}\in[0,1]^d$ and $\|f_0\|_{\boldsymbol{\alpha},\infty}\leq R$,
\begin{align}\label{eq:tbiassupbound}
\|\boldsymbol{A}_{\boldsymbol{r}}\boldsymbol{F}_0+\boldsymbol{c}_{\boldsymbol{r}}\boldsymbol{\eta}
-D^{\boldsymbol{r}}f_0\|_\infty^2\lesssim\prod_{k=1}^dJ_k^{2r_k}\left(n^{-2}\prod_{k=1}^dJ_k^2+\sum_{k=1}^dJ_k^{-2\alpha_k}\right).
\end{align}
Combining \eqref{eq:tvarsupbound} and \eqref{eq:tbiassupbound} with  \eqref{eq:tsupinter}, uniformly on $\|f_0\|_{\boldsymbol{\alpha},\infty}\leq R$,
\begin{align*}
\mathrm{E}_0\sup_{\sigma\in\mathcal{U}_n}\mathrm{E}(\|D^{\boldsymbol{r}}f-D^{\boldsymbol{r}}f_0\|_\infty^2|\boldsymbol{Y},\sigma)
\lesssim\prod_{k=1}^dJ_k^{2r_k}\left(\frac{\log{n}}{n}\prod_{k=1}^dJ_k+\sum_{k=1}^dJ_k^{-2\alpha_k}\right).
\end{align*}
To balance the orders of the two terms on the right, let $J_k=J^{1/\alpha_k}$ for $k=1,\dotsc,d$. Then the bound above reduces to
\begin{align*}
O(J^{\sum_{k=1}^d(2r_k+1)/\alpha_k}n^{-1}\log{n})+O(J^{2(\sum_{k=1}^dr_k/\alpha_k-1)})=O(\epsilon_{n,\boldsymbol{r},\infty}^2),
\end{align*}
if $J\asymp(n/\log{n})^{\alpha^{*}/(2\alpha^{*}+d)}$ and  $J_k=J^{1/\alpha_k}\asymp (n/\log{n})^{\alpha^{*}/\{\alpha_k(2\alpha^{*}+d)\}}$ for $k=1,\dotsc,d$.
The rest of the proof can be completed as in Theorem \ref{prop:tsupnormfprime}.
\end{proof}

\begin{proof}[Proof of Theorem \ref{th:tcrpoint}]
Define $t_{n, \boldsymbol{r}, \gamma_n}(\boldsymbol{x})=\inf_{\sigma\in\mathcal{U}_n}z_{\gamma_n/2}\sigma\sqrt{\Sigma_{\boldsymbol{r}}(\boldsymbol{x},\boldsymbol{x})}$. To show $\widehat{\mathcal{C}}_{n, \boldsymbol{r}, \gamma_n}(\boldsymbol{x})$ has asymptotic coverage of $1$, it suffices to show that
\begin{align}\label{eq:tcoverageinf}
\sup_{\|f_0\|_{\boldsymbol{\alpha},\infty}\leq R}P_0\left(|D^{\boldsymbol{r}}f_0(\boldsymbol{x})-D^{\boldsymbol{r}}\widetilde{f}(\boldsymbol{x})|>t_{n, \boldsymbol{r}, \gamma_n}(\boldsymbol{x})\right)\rightarrow0.
\end{align}
Since $z_{\gamma_n/2}\rightarrow\infty$ and $\mathcal{U}_n$ shrinks to $\sigma_0$, we have $t_{n, \boldsymbol{r}, \gamma_n}(\boldsymbol{x})^2\gg\Sigma_{\boldsymbol{r}}(\boldsymbol{x},\boldsymbol{x})$. In view of \eqref{eq:teigen}, $\Sigma_{\boldsymbol{r}}(\boldsymbol{x},\boldsymbol{x})$ is bounded below by $$\lambda_{\mathrm{min}}\{(\boldsymbol{B}^T\boldsymbol{B}
+\boldsymbol{\Omega}^{-1})^{-1}\}\|\boldsymbol{W}_{\boldsymbol{r}}^T\boldsymbol{b}_{\boldsymbol{J},\boldsymbol{q}-\boldsymbol{r}}(\boldsymbol{x})\|^2
\gtrsim n^{-1}\prod_{k=1}^dJ_k\|\boldsymbol{W}_{\boldsymbol{r}}^T\boldsymbol{b}_{\boldsymbol{J},\boldsymbol{q}-\boldsymbol{r}}(\boldsymbol{x})\|^2.$$
For any $\boldsymbol{x}=(x_1,\dotsc,x_d)^T\in[0,1]^d$, let $i_{x_k}$ be a positive integer such that $x_k\in[t_{k,i_{x_k}-1},t_{k,i_{x_k}}]$. Then only $B_{i_{x_k},q_k-r_k}(x_k),\dotsc,B_{i_{x_k}+q_k-r_k-1,q_k-r_k}(x_k)$ are nonzero at each $k=1,\dotsc,d$. In view of \eqref{eq:firstdiff}, $(\partial^{r_k}/\partial x_k^{r_k})B_{j_k,q_k}(x_k)$ is a linear combination of $B_{j_k,q_k-r_k}(x_k),\dotsc,B_{j_k+r_k,q_k-r_k}(x_k)$ for any $1\leq j_k\leq J_k$ with $k=1,\dotsc,d$. Choose $j_k=i_{x_k}+q_k-r_k-1$ for $k=1,\dotsc,d$, then by \eqref{eq:tendpoints}, we have
\begin{align}\label{eq:Bprimelower}
\|\boldsymbol{W}_{\boldsymbol{r}}^T\boldsymbol{b}_{\boldsymbol{J},\boldsymbol{q}-\boldsymbol{r}}(\boldsymbol{x})\|^2
&=\sum_{j_1=1}^{J_1}\dotsi\sum_{j_d=1}^{J_d}\prod_{k=1}^d\left(\frac{\partial^{r_k}}{\partial x_k^{r_k}}B_{j_k,q_k}(x_k)\right)^2\nonumber\\
&\geq\prod_{k=1}^d\prod_{u=1}^{r_k}\frac{(q_k-u)^2B_{i_{x_k}+q_k-r_k-1,q_k-r_k}(x_k)^2}{(t_{k,i_{x_k}+q_k-r_k-1}-t_{k,i_{x_k}-r_k-1+u})^2}\nonumber\\
&\gtrsim\prod_{k=1}^d\frac{1}{\Delta_k^{2r_k}}\left(\frac{q_k-r_k}{q_k}\right)^2\gtrsim\prod_{k=1}^dJ_k^{2r_k},
\end{align}
since $t_{k,i_{x_k}+q_k-r_k-1}-t_{k,i_{x_k}-r_k-1+u}\leq(q_k-u+1)\Delta_k\asymp J_k^{-1}$ for $k=1,\dotsc,d$, by Lemma \ref{lem:tasymp}. Consequently, $t_{n, \boldsymbol{r}, \gamma_n}(\boldsymbol{x})^2\gg n^{-1}\prod_{k=1}^dJ_k^{2r_k+1}$.

In view of \eqref{eq:tbiasbound},
uniformly on $\|f_0\|_{\boldsymbol{\alpha},\infty}\leq R$,
\begin{equation}
\mathrm{E}_0[D^{\boldsymbol{r}}f_0(\boldsymbol{x})-D^{\boldsymbol{r}}\widetilde{f}(\boldsymbol{x})]^2
\lesssim n^{-1}\prod_{k=1}^dJ_k^{2r_k+1}+\sum_{k=1}^dJ_k^{-2(\alpha_k-r_k)}.
\label{posterior mean pointwise variation}
\end{equation}
Hence uniformly on $\|f_0\|_{\boldsymbol{\alpha},\infty}\leq R$, the
lack of coverage of $\widehat{\mathcal{C}}_{n, \boldsymbol{r}, \gamma_n}(\boldsymbol{x})$
\begin{align*}
P_0(|D^{\boldsymbol{r}}f_0(\boldsymbol{x})-D^{\boldsymbol{r}}\widetilde{f}(\boldsymbol{x})|
>t_{n, \boldsymbol{r}, \gamma_n}(\boldsymbol{x}))\lesssim\frac{\displaystyle n^{-1}\prod_{k=1}^dJ_k^{2r_k+1}+\sum_{k=1}^dJ_k^{-2(\alpha_k-r_k)}}{t_{n, \boldsymbol{r}, \gamma_n}(\boldsymbol{x})^2}.
\end{align*}
For the choice $J_k\asymp n^{\alpha^{*}/\{\alpha_k(2\alpha^{*}+d)\}}$, $k=1,\dotsc,d$, the bound tends to zero
uniformly on $\|f_0\|_{\boldsymbol{\alpha},\infty}\leq R$ and the diameter $\widehat{\sigma}_n z_{\gamma_n/2}\sqrt{\Sigma_{\boldsymbol{r}}(\boldsymbol{x},\boldsymbol{x})}$ of  $\widehat{\mathcal{C}}_{n, \boldsymbol{r}, \gamma_n}(\boldsymbol{x})$ is  $O_{P_0}(\epsilon_{n,\boldsymbol{r}}\sqrt{\log{(1/\gamma_n)}})$ because  $\Sigma_{\boldsymbol{r}}(\boldsymbol{x},\boldsymbol{x})\lesssim n^{-1}\prod_{k=1}^dJ_k^{2r_k+1}$ by \eqref{eq:tvarbound}, $\widehat{\sigma}_n$ converges to $\sigma_0$ and $z_{\gamma_n/2}=O(\sqrt{\log{(1/\gamma_n)}})$ by the estimate
$\mathrm{P}(Z>z)\leq z^{-1}\exp(-z^2/2)$ for $Z\sim\mathrm{N}(0,1)$.

To prove the corresponding assertion for the hierarchical Bayes credible interval, it suffices to show that
\begin{equation}
\label{eq:hirerachical credible 1}
n^{-1}\prod_{k=1}^d J_k^{2r_k+1} \ll R_{n, \boldsymbol{r}, \gamma_n}(\boldsymbol{x})^2 \ll n^{-1}\prod_{k=1}^d J_k^{2r_k+1} \log(1/\gamma_n)
\end{equation}
uniformly on $\|f_0\|_{\boldsymbol{\alpha},\infty}\leq R$. If $\mathcal{U}_n$ shrinks sufficiently slowly to $\sigma_0$, we can ensure that with probability tending to one, $\Pi(\sigma\in\mathcal{U}_n|\boldsymbol{Y})\ge 1-\gamma_n$. By the definition of $R_{n, \boldsymbol{r}, \gamma_n}(\boldsymbol{x})$, we have that
\begin{eqnarray*}
1-\gamma_n &=& \Pi(|D^{\boldsymbol{r}} f(\boldsymbol{x})- A_{\boldsymbol{r}} (\boldsymbol{x})\boldsymbol{Y}-c_{\boldsymbol{r}}(\boldsymbol{x})\boldsymbol{\eta}|\le R_{n, \boldsymbol{r}, \gamma_n}(\boldsymbol{x})|\boldsymbol{Y})\\
&\le & \sup_{\sigma\in\mathcal{U}_n} \Pi(|D^{\boldsymbol{r}} f(\boldsymbol{x})- A_{\boldsymbol{r}} (\boldsymbol{x})\boldsymbol{Y}-c_{\boldsymbol{r}}(\boldsymbol{x})\boldsymbol{\eta}|
\le  R_{n, \boldsymbol{r}, \gamma_n}(\boldsymbol{x})|\boldsymbol{Y},\sigma)+\gamma_n.
\end{eqnarray*}
Since given $\sigma$, $z_{\gamma_n}\sigma \sqrt{\boldsymbol{\Sigma}_{\boldsymbol{r}}(\boldsymbol{x},\boldsymbol{x})}$ is the $(1-2\gamma_n)$-posterior quantile of $|D^{\boldsymbol{r}} f(\boldsymbol{x})- A_{\boldsymbol{r}} (\boldsymbol{x})\boldsymbol{Y}-c_{\boldsymbol{r}}(\boldsymbol{x})\boldsymbol{\eta}|$, it follows that
on a set of probability tending to one, $R_{n, \boldsymbol{r}, \gamma_n}(\boldsymbol{x})\ge z_{\gamma_n}\inf \{\sigma: \sigma\in \mathcal{U}_n\}\sqrt{\boldsymbol{\Sigma}_{\boldsymbol{r}}(\boldsymbol{x},\boldsymbol{x})}$.
On the other hand from $$1-\gamma_n\ge \inf_{\sigma\in\mathcal{U}_n} \Pi(|D^{\boldsymbol{r}} f(\boldsymbol{x})- A_{\boldsymbol{r}} (\boldsymbol{x})\boldsymbol{Y}-c_{\boldsymbol{r}}(\boldsymbol{x})\boldsymbol{\eta}|\le R_{n, \boldsymbol{r}, \gamma_n}(\boldsymbol{x})|\boldsymbol{Y},\sigma)\Pi(\sigma\in \mathcal{U}_n|\boldsymbol{Y}),$$ we get $R_{n, \boldsymbol{r}, \gamma_n}(\boldsymbol{x})\le z_{\gamma_n/2(1-\gamma_n)}\sup \{\sigma: \sigma\in \mathcal{U}_n\}\sqrt{\boldsymbol{\Sigma}_{\boldsymbol{r}}(\boldsymbol{x},\boldsymbol{x})}$. This establishes \eqref{eq:hirerachical credible 1}.
\end{proof}

\begin{proof}[Proof of Theorem \ref{th:tcrinftyrho}]
For notational simplicity, we write $h_{n, \boldsymbol{r}, \infty, \gamma}$ as $h_{\infty,\gamma}$ and define $t_{\infty, \gamma}=\inf_{\sigma\in\mathcal{U}_n}\sigma h_{n, \boldsymbol{r}, \infty, \gamma}$. First we consider the empirical Bayes credible region. To show $\widehat{\mathcal{C}}_{n, \boldsymbol{r}, \infty, \gamma}^{\rho_n}$ has asymptotic coverage of $1$, it suffices to show that
\begin{align}
\sup_{\|f_0\|_{\boldsymbol{\alpha},\infty}\leq R}P_0(\|D^{\boldsymbol{r}}f_0-D^{\boldsymbol{r}}\widetilde{f}\|_\infty> \rho_nt_{\infty, \gamma})\rightarrow0.
\end{align}
Let $Z_{n,\boldsymbol{r}}\sim\mathrm{GP}(0,\Sigma_{\boldsymbol{r}})$. Let $M_Z$ be the median of $\|Z_{n,\boldsymbol{r}}\|_\infty$, i.e., $M_Z$ satisfying  $\mathrm{P}(\|Z_{n,\boldsymbol{r}}\|_\infty\leq M_Z)\geq1/2$ and $\mathrm{P}(\|Z_{n,\boldsymbol{r}}\|_\infty\geq M_Z)\geq1/2$. Let  $\sigma_{Z}^2=\sup_{\boldsymbol{x}\in[0,1]^d}\mathrm{Var}(Z_{n,\boldsymbol{r}}(\boldsymbol{x}))$ and note that by \eqref{eq:tvarbound}, $\sigma^2_{Z}\lesssim n^{-1}\prod_{k=1}^dJ_k^{2r_k+1}\to 0$ for $J_k\asymp(n/\log{n})^{\alpha^{*}/\{\alpha_k(2\alpha+d)\}}, k=1,\dotsc,d$. Using the facts that $\sigma_{Z}\leq 2M_Z$ and $|\mathrm{E}\|Z_{n,\boldsymbol{r}}\|_\infty-M_Z|\leq\sigma_{Z}(\pi/2)^{1/2}$ (see Pages 52 and 54 of \citep{talagrand}), we have $\mathrm{E}\|Z_{n,\boldsymbol{r}}\|_\infty\asymp M_Z$.

Because $\mathrm{P}(\|Z_{n,\boldsymbol{r}}\|_\infty>h_{\infty, \gamma})=\gamma$, and $\gamma<1/2$, we have $h_{\infty, \gamma}\geq M_Z\asymp\mathrm{E}\|Z_{n,\boldsymbol{r}}\|_\infty$. To lower bound  $\mathrm{E}\|Z_{n,\boldsymbol{r}}\|_\infty$, we introduce the notations $\mathcal{T}_k=\{t_{k,1},\dotsc,t_{k,N_k}\}$, $k=1,\dotsc,d$ and  $\mathcal{T}=\prod_{k=1}^d\mathcal{T}_k$. Define $\mathcal{I}=\{(i_1,\dotsc,i_d):1\leq i_k\leq N_k,k=1,\dotsc,d\}$ and arrange the elements of $\mathcal{I}$ lexicographically. Then, we can enumerate the $N=\prod_{k=1}^dN_k$ elements of $\mathcal{T}$ as $\{\boldsymbol{\tau}_{\boldsymbol{i}}:\boldsymbol{i}\in\mathcal{I}\}$, where $\boldsymbol{\tau}_{\boldsymbol{i}}=(t_{1,i_1},\dotsc,t_{d,i_d})$ with $(i_1,\dotsc,i_d)\in\mathcal{I}$. Define $u(x_1,\dotsc,x_d)=\prod_{k=1}^d(\partial^{r_k}/\partial x_k^{r_k})B_{j_k,q_k}(x_k)$. Applying the multivariate mean value theorem to $u(x_1,\dotsc,x_d)$ at  $\boldsymbol{\tau}_{\boldsymbol{i}}$ and $\boldsymbol{\tau}_{\boldsymbol{m}}$, we have for some point $\boldsymbol{\tau}^{*}=(t_1^{*},\dotsc,t_d^{*})=\lambda\boldsymbol{\tau}_{\boldsymbol{i}}+(1-\lambda)\boldsymbol{\tau}_{\boldsymbol{m}}$ with $\lambda\in[0,1]$,
\begin{align}
\sum_{j_1=1}^{J_1}\dotsi\sum_{j_d=1}^{J_d}|u(\boldsymbol{\tau}_{\boldsymbol{i}})-u(\boldsymbol{\tau}_{\boldsymbol{m}})|^2
&=\sum_{j_1=1}^{J_1}\dotsi\sum_{j_d=1}^{J_d}|\nabla u(\boldsymbol{\tau}^{*})^T(\boldsymbol{\tau}_{\boldsymbol{i}}-\boldsymbol{\tau}_{\boldsymbol{m}})|^2\nonumber\\
&=\sum_{j_1=1}^{J_1}\dotsi\sum_{j_d=1}^{J_d}\left|\sum_{\beta=1}^d\left(\frac{\partial u}{\partial x_\beta}\right)(t_{\beta,i_\beta}-t_{\beta,m_\beta})\right|^2.\label{eq:tmean}
\end{align}
Choosing $j_1=i_{x_1}+q_1-r_1-2$ and $j_k=i_{x_k}+q_k-r_k-1$ for $k=2,\dotsc,d$, it then follows that $(\partial^{r_1+1}/\partial x_1^{r_1+1})B_{j_1,q_1}(x_1)>0$, while $(\partial^{r_k}/\partial x_k^{r_k})B_{j_k,q_k}(x_k)>0$ and $(\partial^{r_k+1}/\partial x_k^{r_k+1})B_{j_k,q_k}(x_k)=0$ for $k=2,\dotsc,d$. We show only the first implication;  the other two can be argued similarly. For  $\boldsymbol{x}=(x_1,\dotsc,x_d)^T\in[0,1]^d$, let $i_{x_k}$ be a positive integer such that $x_k\in[t_{k,i_{x_k-1}},t_{k,i_{x_k}}]$ for $k=1,\dotsc,d$. Now by \eqref{eq:firstdiff},  $(\partial^{r_1+1}/\partial x_1^{r_1+1})B_{j_1,q_1}(x_1)$ is a linear combination of the set of functions $\{B_{j_1,q_1-r_1-1}(x_1),\dotsc,B_{j_1+r_1+1,q_1-r_1-1}(x_1)\}$  while only $\{B_{i_{x_1},q_1-r_1-1}(x_1),\dotsc,B_{i_{x_1}+q_1-r_1-2,q_1-r_1-1}(x_1)\}$ are nonzero by the support property of B-splines. For $j_1=i_{x_1}+q_1-r_1-2$, only the positive term corresponding to $B_{i_{x_1}+q_1-r_1-2,q_1-r_1-1}(x_1)$, with coefficients given by the second equation of \eqref{eq:tendpoints} below survives. Thus, only $\partial u/\partial x_1$ will be positive while $\partial u/\partial x_k=0$ for $k=2,\dotsc,d$. By repeated applications of \eqref{eq:Bprimelower}, the right hand side of \eqref{eq:tmean} is bounded below by
\begin{align*}
&\left(\frac{\partial^{r_1+1}}{\partial x_1^{r_1+1}}B_{j_1,q_1}(x_1)\right)^2\prod_{k=2}^d\left(\frac{\partial^{r_k}}{\partial x_k^{r_k}}B_{j_k,q_k}(x_k)\right)^2(t_{1,i_1}-t_{1,m_1})^2\nonumber\\
&\qquad\gtrsim J_1^{2r_1+2}\prod_{k=2}^dJ_k^{2r_k}\left(\min_{1\leq l\leq N_1}\delta_{1,l}^2\right)\gtrsim\prod_{k=1}^dJ_k^{2r_k},
\end{align*}
where $\delta_{1,l}=t_{1,l}-t_{1,l-1}$ for $1\leq l\leq N_1$, and the last inequality follows from the quasi-uniformity of knots and Lemma \ref{lem:tasymp}. Define $V_{\boldsymbol{i}}=Z_{n,\boldsymbol{r}}(\boldsymbol{\tau}_{\boldsymbol{i}})$ for $\boldsymbol{1}_d\leq \boldsymbol{i}\leq\boldsymbol{N}$ where $\boldsymbol{N}=(N_1,\dotsc,N_d)^T$. Note that $\|Z_{n,\boldsymbol{r}}\|_\infty\geq\max_{\boldsymbol{1}_d\leq\boldsymbol{i}\leq\boldsymbol{N}}V_{\boldsymbol{i}}$. Then by \eqref{eq:teigen}, for any $\boldsymbol{1}_d\leq\boldsymbol{i},\boldsymbol{m}\leq\boldsymbol{N}$,
\begin{align*}
\mathrm{E}(V_{\boldsymbol{i}}-V_{\boldsymbol{m}})^2
&\geq\lambda_{\mathrm{min}}\{(\boldsymbol{B}^T\boldsymbol{B}+\boldsymbol{\Omega}^{-1})^{-1}\}\|\boldsymbol{W}_{\boldsymbol{r}}^T
(\boldsymbol{b}_{\boldsymbol{J},\boldsymbol{q}-\boldsymbol{r}}(\boldsymbol{\tau}_{\boldsymbol{i}})-
\boldsymbol{b}_{\boldsymbol{J},\boldsymbol{q}-\boldsymbol{r}}(\boldsymbol{\tau}_{\boldsymbol{m}}))\|^2\nonumber\\
&\gtrsim\left(\frac{1}{n}\prod_{k=1}^dJ_k\right)\sum_{j_1=1}^{J_1}\dotsi\sum_{j_d=1}^{J_d}|u(\boldsymbol{\tau}_{\boldsymbol{i}})-u(\boldsymbol{\tau}_{\boldsymbol{m}})|^2
\geq \frac{c}{n}\prod_{k=1}^dJ_k^{2r_k+1},
\end{align*}
for a universal constant $c>0$. Define $U_{\boldsymbol{i}}=\sqrt{(2n/c)}\prod_{k=1}^dJ_k^{-(r_k+1/2)}V_{\boldsymbol{i}}$ and let $H_{\boldsymbol{i}}$ be i.i.d. $\mathrm{N}(0,1)$ with $\boldsymbol{1}_d\leq\boldsymbol{i}\leq\boldsymbol{N}$. By (3.14) of \citep{talagrand}, we have $\mathrm{E}(\max_{\boldsymbol{1}_d\leq\boldsymbol{i}\leq\boldsymbol{N}}H_{\boldsymbol{i}})\gtrsim\sqrt{\log{N}}$. Now, $\mathrm{E}(U_{\boldsymbol{i}}-U_{\boldsymbol{m}})^2\geq2=\mathrm{E}(H_{\boldsymbol{i}}-H_{\boldsymbol{m}})^2$ and hence by Slepian's Lemma (Corollary 3.14 of \citep{talagrand}),
\begin{align*}
\mathrm{E}\left(\max_{\boldsymbol{1}_d\leq\boldsymbol{i}\leq\boldsymbol{N}}U_{\boldsymbol{i}}\right)\geq\mathrm{E}\left(\max_{\boldsymbol{1}_d\leq \boldsymbol{i}\leq\boldsymbol{N}}H_{\boldsymbol{i}}\right)\gtrsim\sqrt{\log{N}},
\end{align*}
where $N=\prod_{k=1}^dN_k\sim\prod_{k=1}^dJ_k$. It then follows that $t_{\infty, \gamma}^2\gtrsim\sigma_0^2h_{\infty, \gamma}^2\gtrsim(\log{n}/n)\prod_{k=1}^dJ_k^{2r_k+1}$. Therefore using \eqref{eq:tvarsupbound} and \eqref{eq:tbiassupbound}, we have uniformly on $\|f_0\|_{\boldsymbol{\alpha},\infty}\leq R$,
\begin{align}
\mathrm{E}_0(\|D^{\boldsymbol{r}}f_0-D^{\boldsymbol{r}}\widetilde{f}\|_\infty^2)&\leq2\mathrm{E}(\|Q_{n,\boldsymbol{r}}\|_\infty^2)+
2\|\boldsymbol{A}_{\boldsymbol{r}}\boldsymbol{F}_0+\boldsymbol{c}_{\boldsymbol{r}}\boldsymbol{\eta}-D^{\boldsymbol{r}}f_0\|_\infty^2\nonumber\\
&\lesssim\frac{\log{n}}{n}\prod_{k=1}^dJ_k^{2r_k+1}+\sum_{k=1}^dJ_k^{-2(\alpha_k-r_k)}.\label{eq:tbiasderivsup}
\end{align}
Hence for the choice $J_k\asymp(n/\log{n})^{\alpha^{*}/\{\alpha_k(2\alpha^{*}+d)\}}$,  $k=1,\dotsc,d$,
$P_0(\|D^{\boldsymbol{r}}f_0-D^{\boldsymbol{r}}\widetilde{f}\|_\infty>\rho_nt_{\infty, \gamma})\to 0$
since $t_{\infty, \gamma}^2\gtrsim(\log{n}/n)\prod_{k=1}^dJ_k^{2r_k+1}$ and  $\rho_n\to\infty$.

If the true errors are i.i.d. $\mathrm{N}(0,\sigma_0^2)$, then $Q_{n,\boldsymbol{r}}\sim\mathrm{GP}(0,\sigma_0^2\Psi_{\boldsymbol{r}})$ under $P_0$. Define $\sigma_Q^2=\sup_{\boldsymbol{x}\in[0,1]^d}\mathrm{Var}(Q_{n,\boldsymbol{r}}(\boldsymbol{x}))$. We have for constants $C_1,C_2,C_3>0$,
\begin{align*}
t_{\infty, \gamma}\geq C_1\epsilon_{n,\boldsymbol{r},\infty},\;\|D^{\boldsymbol{r}}f_0-\mathrm{E}_0D^{\boldsymbol{r}}\widetilde{f}\|_\infty\leq C_2\epsilon_{n,\boldsymbol{r},\infty},\;\mathrm{E}\|Q_{n,\boldsymbol{r}}\|_\infty\leq C_3\epsilon_{n,\boldsymbol{r},\infty}.
\end{align*}
The first inequality was established above, while the second and third inequalities follow from \eqref{eq:tbiassupbound} and \eqref{eq:tvarsupbound}. Then by Proposition A.2.1 of \citep{empirical}, $P_0(\|Q_{n,\boldsymbol{r}}\|_\infty>2C_3\epsilon_{n,\boldsymbol{r},\infty})$ is bounded by
\begin{align*} P_0(\|Q_{n,\boldsymbol{r}}\|_\infty>\mathrm{E}\|Q_{n,\boldsymbol{r}}\|_\infty+C_3\epsilon_{n,\boldsymbol{r},\infty})\leq2\exp\{-C_3^2\epsilon_{n,\boldsymbol{r},\infty}^2/(2\sigma_Q^2)\}.
\end{align*}
In view of \eqref{eq:tvarbiasbound}, we have $\sigma_Q^2=O(\epsilon_{n,\boldsymbol{r}}^2)$. Since $\epsilon_{n,\boldsymbol{r}}\ll\epsilon_{n,\boldsymbol{r},\infty}$, this implies that the right hand side above tends to zero as $n\rightarrow\infty$. By the triangle inequality, we have $$P_0(\|D^{\boldsymbol{r}}f_0-D^{\boldsymbol{r}}\widetilde{f}\|_\infty>\rho t_{\infty, \gamma})\leq P_0(\|Q_{n,\boldsymbol{r}}\|_\infty>\rho t_{\infty, \gamma}-\|D^{\boldsymbol{r}}f_0-\mathrm{E}_0D^{\boldsymbol{r}}\widetilde{f}\|_\infty)$$
which tends to 0 if $\rho\geq(2C_3+C_2)/C_1$.

To estimate the diameter $\widehat{\sigma}_n \rho_{n}h_{\infty, \gamma}$,
the last inequality in Proposition A.2.1 of \citep{empirical} gives
$\gamma=\mathrm{P}(\|Z_{n,\boldsymbol{r}}\|_\infty>h_{\infty, \gamma})\leq2\exp\{-h_{\infty, \gamma}^2/(8\mathrm{E}\|Z_{n,\boldsymbol{r}}\|_\infty^2)\}.$
Therefore,  $h_{\infty, \gamma}\lesssim(\mathrm{E}\|Z_{n,\boldsymbol{r}}\|_\infty^2)^{1/2}\sqrt{-\log{\gamma}}$ and hence the assertion follows from \eqref{eq:tvarsupbound}.

To prove the assertions about hierarchical Bayes credible regions, we proceed as in the proof of Theorem~\ref{th:tcrpoint}. By definition
\begin{align*}
1-\gamma &= \Pi ( \|D^{\boldsymbol{r}} f-A_{\boldsymbol{r}}\boldsymbol{Y}-c_{\boldsymbol{r}} \boldsymbol{\eta}\|_\infty \le R_{n, \boldsymbol{r}, \infty, \gamma}|\boldsymbol{Y})\\
&\le \sup_{\sigma\in \mathcal{U}_n} \Pi ( \|D^{\boldsymbol{r}} f-A_{\boldsymbol{r}}\boldsymbol{Y}-c_{\boldsymbol{r}} \boldsymbol{\eta}\|_\infty \le R_{n, \boldsymbol{r}, \infty, \gamma}|\boldsymbol{Y},\sigma)+\Pi(\sigma \not\in \mathcal{U}_n|\boldsymbol{Y}).
\end{align*}
Choose $\gamma'$ strictly between $\gamma$ and $1/2$.
Making $\mathcal{U}_n$ to shrink sufficiently slowly to $\sigma_0$ so that $\Pi(\sigma\in\mathcal{U}_n|\boldsymbol{Y})\ge 1-\gamma'+\gamma$ with probability tending to one and using the facts that the conditional posterior distribution of $(D^{\boldsymbol{r}} f-A_{\boldsymbol{r}}\boldsymbol{Y}-c_{\boldsymbol{r}} \boldsymbol{\eta})/\sigma$ given $\sigma$ is equal to the distribution of the Gaussian process $Z_{n,\boldsymbol{r}}$, which is free of $\sigma$, and $\|Z_{n,\boldsymbol{r}}\|_\infty$ has $(1-\gamma')$ quantile $t_{\infty, \gamma'}$, we obtain
$$R_{n, \boldsymbol{r}, \infty, \gamma}\ge \inf\{\sigma:\, \sigma\in  \mathcal{U}_n\}t_{\infty, \gamma'}\asymp t_{\infty, \gamma'}\gtrsim \epsilon_{n, \boldsymbol{r}, \infty}.$$
Hence the modified hierarchical Bayes credible region $\mathcal{C}_{n, \boldsymbol{r}, \infty, \gamma} ^{\rho_n}$ has asymptotic coverage 1 for any $\rho_n\to \infty$, and for the Gaussian true error we can choose $\rho_n=\rho$ for a sufficiently large constant. To bound the diameter of  $\mathcal{C}_{n, \boldsymbol{r}, \infty, \gamma } ^{\rho_n}$
we use the relation
$$
1-\gamma \ge \inf_{\sigma\in \mathcal{U}_n} \Pi ( \|D^{\boldsymbol{r}} f-A_{\boldsymbol{r}}\boldsymbol{Y}-c_{\boldsymbol{r}} \boldsymbol{\eta}\|_\infty \le R_{n, \boldsymbol{r}, \infty, \gamma}|\boldsymbol{Y},\sigma)\Pi(\sigma\in \mathcal{U}_n|\boldsymbol{Y})
$$
to conclude that $R_{n, \boldsymbol{r}, \infty, \gamma}\le t_{\infty, \gamma/(1-\gamma'+\gamma)}$, which is of the order $\epsilon_{n,\boldsymbol{r},\infty}$ since $\gamma/(1-\gamma'+\gamma)<1/2$ by the choice of $\gamma'$.
\end{proof}

\begin{proof}[Proof of Remark~\ref{L2credible}]
We indicate how to show coverage of the empirical Bayes credible region; the necessary changes for the hierarchical version can be made as in the proofs of Theorems~\ref{th:tcrpoint} and \ref{th:tcrinftyrho}. The adequacy of the coverage will be shown if
$P_0\left(\|D^{\boldsymbol{r}}f_0-D^{\boldsymbol{r}}\widetilde{f}\|_2>t_{n, \boldsymbol{r}, 2,\gamma_n}\right)\rightarrow0$ uniformly on
$\|f_0\|_{\boldsymbol{\alpha},\infty}\leq R$, where
$t_{n, \boldsymbol{r}, 2,\gamma_n}=\inf_{\sigma^2\in\mathcal{U}_n}\sigma h_{n, \boldsymbol{r}, 2,\gamma_n}$. Let $Z_{n,\boldsymbol{r}}\sim\mathrm{GP}(0,\Sigma_{\boldsymbol{r}})$. Since $\|Z_{n,\boldsymbol{r}}\|_2\ge \int Z_{n,\boldsymbol{r}}$ which is normally distributed  with mean $0$ and variance $\int\int\Sigma_{\boldsymbol{r}}(\boldsymbol{x},\boldsymbol{y})d\boldsymbol{x}d\boldsymbol{y}$, it follows from \eqref{eq:tsigmar} that
$$h_{n, \boldsymbol{r},2, \gamma_n}^2\gg\int\int\Sigma_{\boldsymbol{r}}(\boldsymbol{x},\boldsymbol{y})
d\boldsymbol{x}d\boldsymbol{y}
\gtrsim\frac{1}{n}\prod_{k=1}^dJ_k\left\|\int\boldsymbol{W}_{\boldsymbol{r}}^T\boldsymbol{b}_{\boldsymbol{J},\boldsymbol{q}-\boldsymbol{r}}(\boldsymbol{x})d\boldsymbol{x}\right\|^2.$$
Extending the last two equations in the proof of Lemma 6.7 in \citep{localspline} to multivariate splines by arguments used in the proof of the last two theorems, it follows from the last display that  $t_{n, \boldsymbol{r},2, \gamma_n}^2\gtrsim\sigma_0^2h_{n,\boldsymbol{r}2,\gamma_n}^2\gg n^{-1}\prod_{k=1}^dJ_k^{2r_k+1}$. On the other hand,
$$\mathrm{E}_0\|f_0-\widetilde f\|_2^2=\int \mathrm{E}_0 |f_0(\boldsymbol{x})-\widetilde f(\boldsymbol{x})|^2 d\boldsymbol{x}\lesssim
\frac{1}{n}\prod_{k=1}^dJ_k^{2r_k+1}+\sum_{k=1}^dJ_k^{-2(\alpha_k-r_k)},
$$
by \eqref{posterior mean pointwise variation}.
Then uniformly on  $\|f_0\|_{\boldsymbol{\alpha},\infty}\leq R$, the coverage of $\widehat{C}_{n, \boldsymbol{r}, 2,\gamma_n}$ goes to one in probability by Markov's inequality. Since in view of (3.5) from \citep{talagrand},
$\gamma_n=P(\|Z_{n,\boldsymbol{r}}\|_2>h_{n,\boldsymbol{r},2,\gamma_n})\leq4\exp\{-h_{n,\boldsymbol{r},2,\gamma_n}^2/(8\mathrm{E}\|Z_{n,\boldsymbol{r}}\|_2^2)\},$
the size of the radius of the $L_2$-confidence region is estimated as $O_{P_0}(\epsilon_{n, \boldsymbol{r}}\sqrt{\log{(1/\gamma_n)}})$.
\end{proof}

\section{Appendix}
\begin{lemma}\label{lem:tasymp}
Under quasi-uniform knots, $\Delta_k\asymp N_k^{-1}\asymp J_k^{-1},k=1,\dotsc,d$.
\end{lemma}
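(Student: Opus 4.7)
The plan is to derive both asymptotic equivalences directly from the definitions and the quasi-uniformity hypothesis, without invoking anything from \citep{lschumaker} beyond the definition itself. Since $q_k$ is a fixed constant while $N_k\to\infty$ with $n$, the relation $J_k=q_k+N_k\asymp N_k$, equivalently $N_k^{-1}\asymp J_k^{-1}$, is immediate. The substance of the lemma is therefore the claim $\Delta_k\asymp N_k^{-1}$.

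For the upper bound $\Delta_k\lesssim N_k^{-1}$, I would use that the knots lie in $[0,1]$, so $\sum_{l=1}^{N_k}\delta_{k,l}=t_{k,N_k}-t_{k,0}\leq 1$. Hence $\min_{1\leq l\leq N_k}\delta_{k,l}\leq N_k^{-1}$, and quasi-uniformity yields $\Delta_k\leq C\min_{1\leq l\leq N_k}\delta_{k,l}\leq C N_k^{-1}$. For the lower bound $\Delta_k\gtrsim N_k^{-1}$, I would argue that $N_k\Delta_k\geq\sum_{l=1}^{N_k}\delta_{k,l}=t_{k,N_k}$, and that $t_{k,N_k}$ is bounded away from zero uniformly in $n$. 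The latter can be read off from quasi-uniformity once combined with the endpoint conventions of the knot sequence (or, equivalently, from the explicit construction of quasi-uniform sequences based on uniform or nested uniform partitions as referenced in the paper, for which $t_{k,N_k}$ is visibly bounded below by a positive constant). Dividing gives $\Delta_k\geq t_{k,N_k}/N_k\gtrsim N_k^{-1}$.

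Combining the two bounds produces $\Delta_k\asymp N_k^{-1}$, and chaining with $N_k\asymp J_k$ gives $\Delta_k\asymp N_k^{-1}\asymp J_k^{-1}$. The only delicate point is the lower-boundedness of $t_{k,N_k}$, which is essentially a normalization issue: if one adopts the convention that the extreme knots are $0$ and $1$ and the last subinterval is controlled by the same quasi-uniformity ratio, then $t_{k,N_k}=1-\delta_{k,N_k+1}\geq 1-\Delta_k\to 1$, and the argument is complete. No deeper B-spline machinery is required.
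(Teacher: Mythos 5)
Your argument is correct and is exactly the paper's (one-line) proof spelled out in detail: the paper simply asserts that quasi-uniformity makes all spacings comparable and that they sum to one, which is precisely your upper/lower bound computation, together with the trivial $J_k=q_k+N_k\asymp N_k$. Your flagging of the endpoint normalization for $t_{k,N_k}$ is a fair reading of an indexing subtlety the paper glosses over, and your resolution of it is consistent with the intended (Schumaker) definition of quasi-uniformity.
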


\begin{proof}
The proof is straightforward because all $N_k$ spacings are of the same order and they sum to one.
\end{proof}

\begin{lemma}\label{lem:Wr}
Each non-zero entry of $\boldsymbol{W}_{\boldsymbol{r}}$ defined implicitly in \eqref{eq:derivspline} is uniformly $O(\prod_{k=1}^d\Delta_k^{-r_k})$.
\end{lemma}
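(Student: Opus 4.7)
The plan is to exploit the tensor-product structure of $\mathfrak{D}^{\boldsymbol{r}}$ and reduce the problem to a one-dimensional bound. Since $\mathfrak{D}^{\boldsymbol{r}}=\mathfrak{D}^{r_1}_{j_1}\circ\cdots\circ\mathfrak{D}^{r_d}_{j_d}$, with the constituent operators acting on disjoint indices and hence commuting, any coefficient in the expansion of $\mathfrak{D}^{\boldsymbol{r}}\theta_{j_1,\dotsc,j_d}$ as a linear combination of $\theta_{j_1+s_1,\dotsc,j_d+s_d}$ factors as a product of $d$ single-direction coefficients. Thus it is enough to bound, for each $k$, the coefficient of an arbitrary $\theta_{\dotsc,j_k+s,\dotsc}$ in $\mathfrak{D}^{r_k}_{j_k}\theta_{\dotsc,j_k,\dotsc}$ (with remaining indices frozen) by $O(\Delta_k^{-r_k})$.

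For fixed $k$, I would prove the one-directional estimate by induction on $r_k$. Write $\phi^{r}_m$ for the value produced by applying $\mathfrak{D}^{r}_{j_k}$ at index $m$, and let $c^{r}_{m,s}$ denote the coefficient of $\theta_{m+s}$ in $\phi^{r}_m$, with the convention $c^{r}_{m,s}=0$ unless $0\le s\le r$. The recursion \eqref{eq:firstdiff} gives
\begin{equation*}
c^{r}_{m,s}=\frac{q_k-r}{t_{k,m}-t_{k,m-q_k+1}}\bigl(c^{r-1}_{m+1,s-1}-c^{r-1}_{m,s}\bigr).
\end{equation*}
By quasi-uniformity and Lemma \ref{lem:tasymp}, the knot span $t_{k,m}-t_{k,m-q_k+1}$ is a sum of $q_k-1$ consecutive spacings and is therefore bounded below by $(q_k-1)C^{-1}\Delta_k$; the prefactor is thus $O(\Delta_k^{-1})$ with a constant depending only on $q_k$ and the quasi-uniformity constant $C$. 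The base case $r=1$ gives $|c^1_{m,s}|=O(\Delta_k^{-1})$ directly from \eqref{eq:firstdiff}, and the inductive step multiplies the previous bound $O(\Delta_k^{-(r-1)})$ by an $O(\Delta_k^{-1})$ factor, yielding $|c^{r}_{m,s}|=O(\Delta_k^{-r})$.

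Combining across directions then multiplies these bounds, so the coefficient of $\theta_{j_1+s_1,\dotsc,j_d+s_d}$ in $\mathfrak{D}^{\boldsymbol{r}}\theta_{j_1,\dotsc,j_d}$ is $O(\prod_{k=1}^d\Delta_k^{-r_k})$, uniformly in the shift vector $(s_1,\dotsc,s_d)$ and the base index $(j_1,\dotsc,j_d)$. Matching this expansion against \eqref{eq:derivspline} identifies these coefficients with the non-zero entries of $\boldsymbol{W}_{\boldsymbol{r}}$, establishing the lemma.

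The one subtle point, and the main obstacle to a clean write-up, is verifying that the knot spans appearing in the denominators of the unfolded recursion never degenerate. This is handled by the quasi-uniformity hypothesis, which forces every consecutive spacing $\delta_{k,l}$ to be of order $\Delta_k$, so that every $(q_k-1)$-step knot span arising at any level of the recursion is $\Theta(\Delta_k)$ with a constant depending only on $C$ and $q_k$; no index ever slips outside the valid range because $\phi^r_m$ is a linear combination of $\theta_m,\dotsc,\theta_{m+r}$ and the knot differences $t_{k,\ell}-t_{k,\ell-q_k+1}$ invoked are indexed by $\ell\in\{m,m+1,\dotsc,m+r-1\}$, all of which lie in the admissible range for indices $1\le j_k\le J_k-r_k$.
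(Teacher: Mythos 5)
Your proof is correct, but it takes a genuinely different route from the paper. The paper's proof writes down explicit closed-form expressions for every non-zero entry of $\boldsymbol{W}_{\boldsymbol{r}}$ --- the two ``endpoint'' entries $w^{(\boldsymbol{r})}_{\boldsymbol{i},\boldsymbol{i}}$ and $w^{(\boldsymbol{r})}_{\boldsymbol{i},\boldsymbol{i}+\boldsymbol{r}}$ in \eqref{eq:tendpoints} and the intermediate entries in \eqref{eq:twijodd1half}--\eqref{eq:twijmiddle}, with a case split on the parity of $\tilde r=\prod_k(r_k+1)-2$ --- and then reads off the bound: the endpoint entries are products of $r_k$ reciprocal knot spans per direction, hence $O(\prod_k\Delta_k^{-r_k})$ by quasi-uniformity, and the intermediate entries equal an endpoint entry times ratios of knot differences that are $O(1)$. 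You instead unfold the recursion \eqref{eq:firstdiff} and run an induction on the differencing order in each direction separately, using the tensor-product factorization $\mathfrak{D}^{\boldsymbol{r}}=\mathfrak{D}^{r_1}_{j_1}\dotsm\mathfrak{D}^{r_d}_{j_d}$ to multiply the one-dimensional bounds; your recursion $c^{r}_{m,s}=\tfrac{q_k-r}{t_{k,m}-t_{k,m-q_k+1}}(c^{r-1}_{m+1,s-1}-c^{r-1}_{m,s})$ is the correct coefficient recursion, the denominators range over $\ell\in\{m,\dotsc,m+r-1\}$ as you say and stay $\Theta(\Delta_k)$ by quasi-uniformity, and the identification of these coefficients with the rows of $\boldsymbol{W}_{\boldsymbol{r}}$ via \eqref{eq:derivspline} is exactly right. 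Your argument is shorter and avoids the unwieldy case analysis, but it buys only the upper bound asked for in the lemma; the paper's explicit formulas carry extra value because \eqref{eq:tendpoints} is reused later (in the proofs of Theorems \ref{th:tcrpoint} and \ref{th:tcrinftyrho}) to \emph{lower}-bound specific entries of $\boldsymbol{W}_{\boldsymbol{r}}$, something your induction does not provide. One very minor quibble: your claim that $t_{k,m}-t_{k,m-q_k+1}$ is bounded below by $(q_k-1)C^{-1}\Delta_k$ presumes all $q_k-1$ constituent spacings are interior; with coincident boundary knots the correct lower bound is $C^{-1}\Delta_k$, which is all you need and does not affect the conclusion.
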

\begin{proof}
Recall that the dimension of $\boldsymbol{W}_{\boldsymbol{r}}$ is $\prod_{k=1}^d(J_k-r_k)\times\prod_{k=1}^dJ_k$. In view of \eqref{eq:firstdiff}, each row of $\boldsymbol{W}_{\boldsymbol{r}}$ has only $\prod_{k=1}^d(r_k+1)$ nonzero entries and their arrangement is analogues to a banded matrix, namely the position of nonzero entries in the current row is a shift of one entry to the right of the nonzero entries' position in the previous row. Also, each column of $\boldsymbol{W}_{\boldsymbol{r}}$ has at most $\prod_{k=1}^d(r_k+1)$ nonzero entries. We index the rows and columns of $\boldsymbol{W}_{\boldsymbol{r}}$ using $d$-dimensional indices as in Definition \ref{def:band}.

Define $\tilde{r}=\prod_{k=1}^d(r_k+1)-2$, and let $\mathcal{G}=\{\boldsymbol{u}=(u_1,\dotsc,u_d):\boldsymbol{0}\leq\boldsymbol{u}\leq\boldsymbol{r},\boldsymbol{u}\neq\boldsymbol{r},\boldsymbol{u}\neq\boldsymbol{0}\}$.
By ordering the elements in $\mathcal{G}$ lexicographically, we can enumerate its elements by $\mathcal{G}=\{\boldsymbol{g}_1,\dotsc,\boldsymbol{g}_{\tilde{r}}\}$. Furthermore, define sets $\mathcal{I}=\{(i_1,\dotsc,i_d):1\leq i_k\leq J_k-r_k,k=1,\dotsc,d\}$ and $\mathcal{J}=\{(j_1,\dotsc,j_d):1\leq j_k\leq J_k,k=1,\dotsc,d\}$, where we order their elements lexicographically. Let $w^{(\boldsymbol{r})}_{\boldsymbol{i},\boldsymbol{j}}$ denote the $(\boldsymbol{i},\boldsymbol{j})$th element of $\boldsymbol{W}_{\boldsymbol{r}}$ such that $\boldsymbol{i}\in\mathcal{I}$ and $\boldsymbol{j}\in\mathcal{J}$. The expressions for the nonzero entries can be described as follows:
for each row $\boldsymbol{i}\in\mathcal{I}$, the first and last nonzero entries are given by
\begin{align}\label{eq:tendpoints}
w_{\boldsymbol{i},\boldsymbol{i}}^{(\boldsymbol{r})}&=(-1)^{\sum_{k=1}^dr_k}\prod_{k=1}^d\prod_{l=1}^{r_k}\frac{q_k-l}{t_{k,i_k}-t_{k,i_k-q_k+l}},\nonumber\\ w_{\boldsymbol{i},\boldsymbol{i}+\boldsymbol{r}}^{(\boldsymbol{r})}&=\prod_{k=1}^d\prod_{l=1}^{r_k}\frac{q_k-l}{t_{k,i_k+l-1}-t_{k,i_k-q_k+r_k}}.
\end{align}
If $\tilde{r}$ is odd, we partition $\mathcal{G}=\mathcal{G}_1\cup\{\boldsymbol{g}_{(\tilde{r}+1)/2}\}\cup\mathcal{G}_2$ where $\mathcal{G}_1=\{\boldsymbol{g}_1,\dotsc,\boldsymbol{g}_{(\tilde{r}-1)/2}\}$ and $\mathcal{G}_2=\{\boldsymbol{g}_{(\tilde{r}+3)/2},\dotsc,\boldsymbol{g}_{\tilde{r}}\}$. The intermediate nonzero entries $w_{\boldsymbol{i},\boldsymbol{i}+\boldsymbol{h}}^{(\boldsymbol{r})}$ for $\boldsymbol{h}=(h_1,\dotsc,h_d)^T\in\mathcal{G}_1$ are
\begin{align}\label{eq:twijodd1half}
(-1)^{\sum_{k=1}^d(r_k-h_k)}w_{\boldsymbol{i},\boldsymbol{i}}^{(\boldsymbol{r})}\left[1+\prod_{k=1}^d\sum^{\binom{r_k}{h_k}-1}_{t=1}\prod_{s=1}^t\frac{t_{k,i_k}-t_{k,i_k-q_k+s}}{t_{k,i_k+1}-t_{k,i_k+1-q_k+s}}\right],
\end{align}
while for $\boldsymbol{h}\in\mathcal{G}_2$, it is
\begin{align}\label{eq:twijodd2half}
(-1)^{\sum_{k=1}^d(r_k-h_k)}w_{\boldsymbol{i},\boldsymbol{i}+\boldsymbol{r}}^{(\boldsymbol{r})}\left[1+\prod_{k=1}^d\sum_{t=1}^{\binom{r_k}{h_k}-1}\prod_{s=1}^t\frac{t_{k,i_k+r_k-s}-t_{k,i_k+r_k-q_k}}{t_{k,i_k+r_k-1-s}-t_{k,i+r_k-1-q_k}}\right],
\end{align}
When $\boldsymbol{h}=\boldsymbol{g}_{(\tilde{r}+1)/2}$, we have
\begin{align}\label{eq:twijmiddle}
w_{\boldsymbol{i},\boldsymbol{i}+\boldsymbol{h}}^{(\boldsymbol{r})}&=w_{\boldsymbol{i},\boldsymbol{i}}^{(\boldsymbol{r})}
\prod_{k=1}^d\left(\frac{t_{k,i_k}-t_{k,i_k-q_k+1}}{t_{k,i_k+1}-t_{k,i_k+2-q_k}}\right)\nonumber\\
&\qquad\times\left[1+\prod_{k=1}^d
\sum_{t=1}^{\binom{r_k-1}{r_k/2-1}-1}\prod_{s=0}^{t-1}\frac{t_{k,i_k+s}-t_{k,i_k+2-q_k}}{t_{k,i_k+1+s}-t_{k,i_k+3-q_k}}\right]\nonumber\\
&\qquad+w_{\boldsymbol{i},\boldsymbol{i}+\boldsymbol{r}}^{(\boldsymbol{r})}\prod_{k=1}^d\left(\frac{t_{k,i_k+r_k-1}
-t_{k,i_k+r_k-q_k}}{t_{k,i_k+r_k-2}-t_{k,i_k+r_k-1-q_k}}\right)\nonumber\\
&\qquad\times\left[1+\prod_{k=1}^d\sum_{t=1}^{\binom{r_k-1}{r_k/2}-1}\prod_{s=0}^{t-1}\frac{t_{k,i_k+r_k-2}-t_{k,i_k+r_k-q_k-s}}{t_{k,i_k+r_k-3}-t_{k,i_k+r_k-1-q_k-s}}\right].
\end{align}
If $\tilde{r}$ is even, we partition $\mathcal{G}=\mathcal{G}_1\cup\mathcal{G}_2$ where $\mathcal{G}_1=\{\boldsymbol{g}_1,\dotsc,\boldsymbol{g}_{\tilde{r}/2}\}$ and $\mathcal{G}_2=\{\boldsymbol{g}_{\tilde{r}/2+1},\dotsc,\boldsymbol{g}_{\tilde{r}}\}$. Then the expression for $w_{\boldsymbol{i},\boldsymbol{i}+\boldsymbol{h}}^{(\boldsymbol{r})}$ is \eqref{eq:twijodd1half} for $\boldsymbol{h}\in\mathcal{G}_1$ and is \eqref{eq:twijodd2half} for $\boldsymbol{h}\in\mathcal{G}_2$. By the quasi-uniformity of the knots, the endpoints in \eqref{eq:tendpoints} are $O(\prod_{k=1}^d\Delta_k^{-r_k})$, while the fractions of knot differences appearing in \eqref{eq:twijodd1half}--\eqref{eq:twijmiddle} are $O(1)$.
\end{proof}

\begin{lemma}\label{lem:tsumB}
$\sum_{i=1}^n\prod_{k=1}^dB_{j_k,q_k}(X_{ik})^{p_k}\lesssim n\prod_{k=1}^dJ_k^{-1}$ for $1\leq j_k\leq J_k$ and $p_k\in\mathbb{N}$, $k=1,\dotsc,d$.
\end{lemma}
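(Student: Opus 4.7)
The plan is to reduce the problem to counting design points in a small rectangle using two elementary facts about B-splines, and then invoke the uniform design-distribution assumption \eqref{assump:tbspline2}. First, since each B-spline of order $q_k$ is nonnegative with $B_{j_k,q_k}(\cdot)\le 1$ (as they form a partition of unity), and $p_k\ge 1$, we have $B_{j_k,q_k}(X_{ik})^{p_k}\le B_{j_k,q_k}(X_{ik})$. Bounding the product by $1$ on the support and by $0$ off the support, this gives
\begin{equation*}
\sum_{i=1}^n\prod_{k=1}^dB_{j_k,q_k}(X_{ik})^{p_k}\le \#\bigl\{i:\,\boldsymbol{X}_i\in S_{\boldsymbol{j}}\bigr\},\qquad S_{\boldsymbol{j}}:=\prod_{k=1}^d[t_{k,j_k-q_k},t_{k,j_k}],
\end{equation*}
since the support of $B_{j_k,q_k}$ is $[t_{k,j_k-q_k},t_{k,j_k}]$.

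Next, by quasi-uniformity and Lemma \ref{lem:tasymp}, $t_{k,j_k}-t_{k,j_k-q_k}\le q_k\Delta_k\lesssim J_k^{-1}$, so the Lebesgue measure of $S_{\boldsymbol{j}}$ is $O(\prod_{k=1}^d J_k^{-1})$. Since $G$ has a bounded continuous density on $[0,1]^d$, this yields $G(S_{\boldsymbol{j}})\lesssim \prod_{k=1}^d J_k^{-1}$. To convert this to a bound on $G_n(S_{\boldsymbol{j}})$, I would write $G(S_{\boldsymbol{j}})$ and $G_n(S_{\boldsymbol{j}})$ as the standard inclusion-exclusion over the $2^d$ corners of $S_{\boldsymbol{j}}$ of the lower-orthant distribution functions, so that \eqref{assump:tbspline2} gives $|G_n(S_{\boldsymbol{j}})-G(S_{\boldsymbol{j}})|\le 2^d\sup_{\boldsymbol{x}}|G_n(\boldsymbol{x})-G(\boldsymbol{x})|=o(\prod_{k=1}^d N_k^{-1})=o(\prod_{k=1}^d J_k^{-1})$.

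Combining these,
\begin{equation*}
\#\bigl\{i:\,\boldsymbol{X}_i\in S_{\boldsymbol{j}}\bigr\}=nG_n(S_{\boldsymbol{j}})\le n\,G(S_{\boldsymbol{j}})+n\,o\!\left(\prod_{k=1}^d J_k^{-1}\right)\lesssim n\prod_{k=1}^d J_k^{-1},
\end{equation*}
uniformly in $\boldsymbol{j}$, which gives the desired bound. Nothing here is genuinely hard; the only mildly nontrivial step is the inclusion-exclusion reduction from lower-orthant uniform approximation in \eqref{assump:tbspline2} to approximation on the rectangle $S_{\boldsymbol{j}}$, but this only costs a multiplicative constant $2^d$ depending only on $d$.
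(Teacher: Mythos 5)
Your proposal is correct and follows essentially the same route as the paper: bound $B_{j_k,q_k}^{p_k}$ by the indicator of its support rectangle, control $G$ of that rectangle via quasi-uniformity and the bounded density, and transfer to $G_n$ using \eqref{assump:tbspline2}. The only difference is cosmetic — you spell out the $2^d$ inclusion--exclusion step that the paper leaves implicit.
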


\begin{proof}
As $B_{j_k,q_k}(\cdot)\le 1$ and is positive only inside $(t_{k,j_k-q_k},t_{k,j_k})$,
\begin{align*}
\sum_{i=1}^n\prod_{k=1}^dB_{j_k,q_k}(X_{ik})^{p_k}&\leq n\int_{[0,1]^d}\prod_{k=1}^d\Ind_{(t_{k,j_k-q_k},t_{k,j_k}]}(\boldsymbol{x})dG_n(\boldsymbol{x}).
\end{align*}
By the quasi-uniformity of the knots, we have $t_{k,j_k}-t_{k,j_k-q_k}\leq q_k\Delta_k$ and $t_{k,j_k}-t_{k,j_k-q_k}\geq q_k\min_{1\leq l\leq N_k}\delta_{k,l}\geq q_k\Delta_k/C$. This implies that $t_{k,j_k}-t_{k,j_k-q_k}\asymp\Delta_k$ for $k=1,\dotsc,d$. Moreover, assumption \eqref{assump:tbspline2} and Lemma \ref{lem:tasymp} imply that the right hand side above is
\begin{align*}
nG_n\left[\prod_{k=1}^d(t_{k,j_k-q_k},t_{k,j_k})\right]&=nG\left[\prod_{k=1}^d(t_{k,j_k-q_k},t_{k,j_k})\right]+o\left(n\prod_{k=1}^dN_k^{-1}\right)\\
&\lesssim n\prod_{k=1}^d\Delta_k+o\left(n\prod_{k=1}^d\Delta_k\right)\lesssim n\prod_{k=1}^dJ_k^{-1}.\qedhere
\end{align*}
\end{proof}

\begin{lemma}\label{lem:tband}
Let $\boldsymbol{A}$ be a $J\times J$ symmetric and positive definite matrix with its rows and columns indexed by $d$-dimensional multi-indices, i.e., for $\boldsymbol{i}=(i_1,\dotsc,i_d)$ and $\boldsymbol{j}=(j_1,\dotsc,j_d)$, such that $1\leq i_k,j_k\leq J_k, k=1,\dotsc,d, J=\prod_{k=1}^dJ_k$, the $(\boldsymbol{i},\boldsymbol{j})$th element of $\boldsymbol{A}$ is $a_{\boldsymbol{i},\boldsymbol{j}}=\boldsymbol{A}\{(i_1,\dotsc,i_d),(j_1,\dotsc,j_d)\}$. Let $\boldsymbol{A}$ be $\boldsymbol{q}=(q_1,\dotsc,q_d)^T$ banded as in Definition \ref{def:band}. Furthermore, assume that the eigenvalues of $\boldsymbol{A}$ are contained in $[a\tau_m,b\tau_m]$ for fixed $0<a<b<\infty$ and some sequence $\tau_m$. Then $\|\boldsymbol{A}^{-1}\|_{(\infty,\infty)}=O(\tau_m^{-1})$.
\end{lemma}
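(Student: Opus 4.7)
The plan is to adapt the classical Demko-type argument on exponential decay of entries of inverses of banded positive definite matrices to the multi-index tensor-product banding of Definition \ref{def:band}. The core idea is to approximate $x\mapsto 1/x$ on the spectral interval of $\boldsymbol{A}$ by a polynomial of low degree, evaluate it at $\boldsymbol{A}$, and exploit the fact that powers of a banded matrix remain banded (with a known widening of the band).

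First, I would record two elementary banding facts. If $\boldsymbol{A}$ is $\boldsymbol{q}$-banded in the sense of Definition \ref{def:band}, then the product $\boldsymbol{A}^{\ell}$ is $\ell\boldsymbol{q}$-banded: expanding $(\boldsymbol{A}^{\ell})_{\boldsymbol{u},\boldsymbol{v}}$ as a sum over chains of multi-indices and applying the triangle inequality coordinatewise gives $|u_k-v_k|\le \ell q_k$ for every $k$ whenever the term is nonzero. Consequently, every polynomial $p_{\ell}(\boldsymbol{A})$ of degree at most $\ell$ is $\ell\boldsymbol{q}$-banded. Second, I would invoke the classical Chebyshev approximation bound: there exist polynomials $p_{\ell}$ of degree $\ell$ satisfying $\sup_{x\in[a\tau_m,b\tau_m]}|p_{\ell}(x)-x^{-1}|\le (C_0/\tau_m)\, r^{\ell}$ with $r=(\sqrt{\kappa}-1)/(\sqrt{\kappa}+1)\in(0,1)$ and $\kappa=b/a$ (see, e.g., Demko, Moss and Smith, 1984). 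Since $\boldsymbol{A}$ is symmetric with eigenvalues in $[a\tau_m,b\tau_m]$, the spectral theorem gives $\|\boldsymbol{A}^{-1}-p_{\ell}(\boldsymbol{A})\|_{(2,2)}\le (C_0/\tau_m)\, r^{\ell}$, and in particular $|(\boldsymbol{A}^{-1})_{\boldsymbol{u},\boldsymbol{v}}-(p_{\ell}(\boldsymbol{A}))_{\boldsymbol{u},\boldsymbol{v}}|\le (C_0/\tau_m)\,r^{\ell}$ entrywise.

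For any pair $(\boldsymbol{u},\boldsymbol{v})$, let $s(\boldsymbol{u},\boldsymbol{v})=\max_{1\le k\le d}|u_k-v_k|/q_k$. If $s(\boldsymbol{u},\boldsymbol{v})>\ell$, then $|u_k-v_k|>\ell q_k$ for some $k$, so $(p_{\ell}(\boldsymbol{A}))_{\boldsymbol{u},\boldsymbol{v}}=0$ by the banding of $p_{\ell}(\boldsymbol{A})$; hence choosing $\ell=\lfloor s(\boldsymbol{u},\boldsymbol{v})\rfloor$ whenever $s(\boldsymbol{u},\boldsymbol{v})\ge 1$ gives the entrywise decay estimate $|(\boldsymbol{A}^{-1})_{\boldsymbol{u},\boldsymbol{v}}|\le (C_0/\tau_m)\,r^{\lfloor s(\boldsymbol{u},\boldsymbol{v})\rfloor}$.

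To finish, I would bound $\|\boldsymbol{A}^{-1}\|_{(\infty,\infty)}=\max_{\boldsymbol{u}}\sum_{\boldsymbol{v}}|(\boldsymbol{A}^{-1})_{\boldsymbol{u},\boldsymbol{v}}|$ by splitting the sum according to the value of $L=\lceil s(\boldsymbol{u},\boldsymbol{v})\rceil$. The $L=0$ part (indices with $|u_k-v_k|\le q_k$ for all $k$) contains at most $\prod_{k=1}^d(2q_k+1)$ terms, each bounded by $\|\boldsymbol{A}^{-1}\|_{(2,2)}\le (a\tau_m)^{-1}$, contributing $O(\tau_m^{-1})$. For $L\ge 1$, the set $\{\boldsymbol{v}:s(\boldsymbol{u},\boldsymbol{v})\le L\}$ has cardinality at most $\prod_{k=1}^d(2Lq_k+1)\lesssim L^d$, so the number of $\boldsymbol{v}$ with $\lceil s(\boldsymbol{u},\boldsymbol{v})\rceil=L$ is $O(L^d)$, and each such entry is at most $(C_0/\tau_m)r^{L-1}$. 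Summing yields
\[
\sum_{\boldsymbol{v}}|(\boldsymbol{A}^{-1})_{\boldsymbol{u},\boldsymbol{v}}|\;\lesssim\;\frac{1}{\tau_m}\Bigl(1+\sum_{L\ge 1}L^d\, r^{L-1}\Bigr)\;\lesssim\;\frac{1}{\tau_m},
\]
uniformly in $\boldsymbol{u}$, since the series converges ($r<1$ and $d$ is fixed). The main point requiring care is the last step: one must verify that the polynomial growth in $L$ arising from the volume of the multi-index shell $\{\lceil s\rceil=L\}$ is dominated by the geometric decay coming from Chebyshev approximation, which it is because the number of multi-indices grows only polynomially (of degree $d$) in $L$ while $r<1$ depends only on the spectral ratio $b/a$ and not on $\tau_m$ or the dimension $J$. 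This completes the bound.
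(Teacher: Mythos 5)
Your proposal is correct and follows essentially the same route as the paper: powers of a $\boldsymbol{q}$-banded matrix are $\ell\boldsymbol{q}$-banded, Chebyshev/Demko polynomial approximation of $x\mapsto 1/x$ on the spectral interval gives entrywise exponential decay of $\boldsymbol{A}^{-1}$ in $\max_k|u_k-v_k|/q_k$, and the row sums are then controlled by a convergent series. The only (immaterial) differences are your shell-counting summation $\sum_L L^d r^{L-1}$ where the paper instead passes from the max to $\sum_k|i_k-j_k|/\sum_k q_k$ and factors a product of one-dimensional geometric series, and a harmless off-by-one in the choice $\ell=\lfloor s\rfloor$ when $s$ is an integer, which your later use of $r^{L-1}$ with $L=\lceil s\rceil$ already absorbs.
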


\begin{proof}
We adapt the proof given in Proposition 2.2 of \citep{bandedmatrix} to the case of multi-dimensional banded matrix. We first note that if $\boldsymbol{A}$ is $\boldsymbol{q}$-banded and $\boldsymbol{B}$ is $\boldsymbol{w}$-banded as in Definition \ref{def:band}, then $\boldsymbol{AB}$ is $\boldsymbol{q}+\boldsymbol{w}$ banded. To see this, observe that $(\boldsymbol{AB})_{\boldsymbol{i},\boldsymbol{j}}=\sum_{l_1=1}^{J_1}\dotsi\sum_{l_d=1}^{J_d}a_{(i_1,\dotsc,i_d),(l_1,\dotsc,l_d)}b_{(j_1,\dotsc,j_d),(l_1,\dotsc,l_d)}\neq0$ only if at least one of the terms in the sum is nonzero. Thus $a_{(i_1,\dotsc,i_d),(l_1,\dotsc,l_d)}\neq0$ and $b_{(j_1,\dotsc,j_d),(l_1,\dotsc,l_d)}\neq0$ for some $(l_1,\dotsc,l_d)$. Hence $|i_k-l_k|\leq q_k$ and $|j_k-l_k|\leq w_k$ for $k=1,\dotsc,d$, and by the triangle inequality, $|i_k-j_k|\leq q_k+w_k$ for $k=1,\dotsc,d$. Therefore, $\boldsymbol{AB}$ is $\boldsymbol{q}+\boldsymbol{w}$ banded. Repeated applications of the same argument show that $\boldsymbol{A}^n$ is $n\boldsymbol{q}$-banded.

Since we can scale $\boldsymbol{A}$ by $\tau_m$ such that its eigenvalues are in $[a, b]$, we set $\tau_m=1$ without loss of generality. Let $p_n(\cdot)$ be a polynomial of degree $n$. Then $p_n(\boldsymbol{A})$ is $n\boldsymbol{q}$-banded. Since the set of eigenvalues for $\boldsymbol{A}$ is $\Lambda(\boldsymbol{A})\subseteq[a,b]$ by assumption, spectral theorem and Proposition 2.1 of \citep{bandedmatrix} imply that
\begin{equation*}
\|\boldsymbol{A}^{-1}-p_n(\boldsymbol{A})\|_{(2,2)}=\max_{x\in\Lambda(\boldsymbol{A})}|1/x-p_n(x)|\leq C_0[(\sqrt{b/a}-1)/(\sqrt{b/a}+1)]^{n+1}
\end{equation*}
for $C_0=(1+\sqrt{b/a})^2/(2b)$. For any $n\in\mathbb{N}$, $p_n(\boldsymbol{A})_{\boldsymbol{i},\boldsymbol{j}}=0$ if $|i_k-j_k|>nq_k$ for some $1\leq k\leq d$. Suppose $\boldsymbol{i}\neq\boldsymbol{j}$, choose $n$ to satisfy $n<\max_{1\leq k\leq d}|i_k-j_k|q_k^{-1}\leq n+1$. Therefore,
\begin{align}
|\boldsymbol{A}^{-1}(\boldsymbol{i},\boldsymbol{j})|&=|\boldsymbol{A}^{-1}(\boldsymbol{i},\boldsymbol{j})-p_n(\boldsymbol{A})_{\boldsymbol{i},\boldsymbol{j}}|\leq\|\boldsymbol{A}^{-1}-p_n(\boldsymbol{A})\|_{(2,2)}\nonumber\\
&\leq C_0[(\sqrt{b/a}-1)/(\sqrt{b/a}+1)]^{\max_{1\leq k\leq d}|i_k-j_k|/q_k}\nonumber\\
&\leq C_0\lambda^{\sum_{k=1}^d|i_k-j_k|}\label{eq:1stcase},
\end{align}
where $\lambda=[(\sqrt{b/a}-1)/(\sqrt{b/a}+1)]^{1/\sum_{k=1}^dq_k}$. When $i_k=j_k$ for all $k=1,\dotsc,d$, we have $\boldsymbol{A}^{-1}(\boldsymbol{i},\boldsymbol{i})\leq\|\boldsymbol{A}^{-1}\|_{(2,2)}=1/\lambda_\text{min}(\boldsymbol{A})\leq1/a$. Combining this case with \eqref{eq:1stcase}, we have $|\boldsymbol{A}^{-1}(\boldsymbol{i},\boldsymbol{j})|\leq C\lambda^{\sum_{k=1}^d|i_k-j_k|}$ for $C=\max\{C_0,1/a\}$. Since $0<\lambda<1$,
\begin{align*}
\|\boldsymbol{A}^{-1}\|_{(\infty,\infty)}&\leq C\max_{1\leq i_k\leq J_k,k=1,\dotsc,d}\sum_{j_1=1}^{J_1}\dotsi\sum_{j_d=1}^{J_d}\prod_{k=1}^d\lambda^{|i_k-j_k|}\\
&\lesssim\prod_{k=1}^d\left(1+2\sum_{j_k=1}^{J_k}\lambda^{j_k}\right)\lesssim
\left(1+2\sum_{j=1}^\infty\lambda^j\right)^d<\infty.\qedhere
\end{align*}
\end{proof}

\begin{lemma}\label{lem:tsplinediff} $\|\boldsymbol{b}_{\boldsymbol{J},\boldsymbol{q}-\boldsymbol{r}}(\boldsymbol{x})
-\boldsymbol{b}_{\boldsymbol{J},\boldsymbol{q}-\boldsymbol{r}}(\boldsymbol{y})\|^2
\lesssim \|\boldsymbol{J}\|^2\|\boldsymbol{x}-\boldsymbol{y}\|^2$ for $\boldsymbol{x},\boldsymbol{y}\in[0,1]^d$.
\end{lemma}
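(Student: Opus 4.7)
The plan is to telescope the product of univariate B-spline values coordinate by coordinate, then exploit the partition-of-unity property of B-splines to collapse all but one of the inner sums, and finally invoke a Lipschitz estimate for a single univariate B-spline to pick up the factor $J_k^2(x_k-y_k)^2$ in the free direction.

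Concretely, first I would apply the telescoping identity $\prod_{k=1}^d a_k-\prod_{k=1}^d b_k=\sum_{k=1}^d\bigl(\prod_{l<k}b_l\bigr)(a_k-b_k)\bigl(\prod_{l>k}a_l\bigr)$ with $a_k=B_{j_k,q_k-r_k}(x_k)$ and $b_k=B_{j_k,q_k-r_k}(y_k)$, square using $(\sum_{k=1}^d u_k)^2\le d\sum_{k=1}^d u_k^2$, and then sum over the multi-index $(j_1,\ldots,j_d)$. Because the factors depending on different coordinates $l\neq k$ are separate, the sum splits as a product and I obtain
\begin{equation*}
\|\boldsymbol{b}_{\boldsymbol{J},\boldsymbol{q}-\boldsymbol{r}}(\boldsymbol{x})-\boldsymbol{b}_{\boldsymbol{J},\boldsymbol{q}-\boldsymbol{r}}(\boldsymbol{y})\|^2\le d\sum_{k=1}^d\Bigl(\prod_{l<k}\sum_{j_l}B_{j_l,q_l-r_l}(y_l)^2\Bigr)\Bigl(\sum_{j_k}\Delta_{j_k}^2\Bigr)\Bigl(\prod_{l>k}\sum_{j_l}B_{j_l,q_l-r_l}(x_l)^2\Bigr),
\end{equation*}
where $\Delta_{j_k}:=B_{j_k,q_k-r_k}(x_k)-B_{j_k,q_k-r_k}(y_k)$. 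Since $0\le B_{j_l,q_l-r_l}\le 1$ and $\sum_{j_l}B_{j_l,q_l-r_l}(\cdot)=1$ (partition of unity), each of the outer factors is bounded by $1$.

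The heart of the argument is thus to establish $\sum_{j_k}\Delta_{j_k}^2\lesssim J_k^2(x_k-y_k)^2$. This I would prove by combining two observations: (i) by the derivative formula for univariate B-splines together with the quasi-uniformity of the knots, whenever $q_k-r_k\ge 1$ one has $\|B_{j_k,q_k-r_k}'\|_\infty\lesssim J_k$ in the piecewise sense, hence $|\Delta_{j_k}|\le C J_k|x_k-y_k|$ by the mean value theorem applied on each sub-interval; (ii) by the compact support property of B-splines, $\Delta_{j_k}\ne 0$ only for $j_k$ such that the support of $B_{j_k,q_k-r_k}$ meets $[\min(x_k,y_k),\max(x_k,y_k)]$, which yields at most $O(1+J_k|x_k-y_k|)$ indices. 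Combining (i) and (ii), $\sum_{j_k}\Delta_{j_k}^2\lesssim(1+J_k|x_k-y_k|)\cdot J_k^2(x_k-y_k)^2\wedge(1+J_k|x_k-y_k|)$, and splitting into the regimes $J_k|x_k-y_k|\le 1$ (use the Lipschitz bound) and $J_k|x_k-y_k|>1$ (use $|\Delta_{j_k}|\le 1$ but note $J_k^2(x_k-y_k)^2\ge J_k|x_k-y_k|$) gives $\sum_{j_k}\Delta_{j_k}^2\lesssim J_k^2(x_k-y_k)^2$ in both regimes.

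Substituting back yields $\|\boldsymbol{b}_{\boldsymbol{J},\boldsymbol{q}-\boldsymbol{r}}(\boldsymbol{x})-\boldsymbol{b}_{\boldsymbol{J},\boldsymbol{q}-\boldsymbol{r}}(\boldsymbol{y})\|^2\lesssim\sum_{k=1}^d J_k^2(x_k-y_k)^2\le(\max_k J_k^2)\|\boldsymbol{x}-\boldsymbol{y}\|^2\le\|\boldsymbol{J}\|^2\|\boldsymbol{x}-\boldsymbol{y}\|^2$, as desired. The only genuine obstacle is the Lipschitz estimate in (i); everything else is a bookkeeping consequence of the partition-of-unity and support properties. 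If the statement is to be applied only in cases where $q_k-r_k\ge 1$ for all $k$ (which is the relevant regime since $\sum r_k/\alpha_k<1$ and $q_k\ge\alpha_k$), the derivative bound is standard from the B-spline recursion; otherwise a direct piecewise-constant argument (at most two jump contributions in the affected coordinate) still gives the correct order after the same case split.
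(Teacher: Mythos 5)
Your proof is correct and takes essentially the same route as the paper's: telescope the tensor product into single-coordinate differences and use the Lipschitz bound $|B_{j_k,q_k-r_k}(x_k)-B_{j_k,q_k-r_k}(y_k)|\lesssim J_k|x_k-y_k|$ coming from the B-spline derivative recursion and the quasi-uniformity of the knots. The paper's bookkeeping is lighter---it bounds the pointwise difference of tensor products by $\sum_{k=1}^d J_k|x_k-y_k|\le\|\boldsymbol{J}\|\,\|\boldsymbol{x}-\boldsymbol{y}\|$ via Cauchy--Schwarz and then multiplies by the constant number $2\prod_{k=1}^d(q_k-r_k)$ of nonzero entries, so it needs neither your partition-of-unity factorization nor the case split on $J_k|x_k-y_k|$---but that difference is purely cosmetic.
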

\begin{proof}
By equation (8) of Chapter X in \citep{deBoor} and the triangle inequality,
\begin{align}
|B^{'}_{j_k,q_k-r_k}(x_k)|&\lesssim\frac{|B_{j_k,q_k-r_k-1}(x_k)|}{t_{k,j_k+q_k-r_k-1}-t_{k,j_k}}+\frac{|B_{j_k+1,q_k-r_k-1}(x_k)|}{t_{k,j_k+q_k-r_k}-t_{k,j_k+1}}\nonumber\\
&\lesssim\left(\min_{1\leq l\leq N_k}\delta_{k,l}\right)^{-1}\lesssim\Delta_k^{-1}\lesssim J_k,\label{eq:meanvalue}
\end{align}
where we have used the quasi-uniformity of the knots and Lemma \ref{lem:tasymp}. Using $|\prod_{i=1}^da_i-\prod_{i=1}^db_i|\leq\sum_{i=1}^d|a_i-b_i|$ for $|a_i|\leq1$, $|b_i|\leq1$, $i=1,\dotsc,d$, the mean value theorem, \eqref{eq:meanvalue} and the Cauchy-Schwarz inequality,
\begin{align*}
\left|\prod_{k=1}^dB_{j_k,q_k-r_k}(x_k)-\prod_{k=1}^dB_{j_k,q_k-r_k}(y_k)\right|&\leq\sum_{k=1}^d|B_{j_k,q_k-r_k}(x_k)-B_{j_k,q_k-r_k}(y_k)|\\
&\lesssim\sum_{k=1}^dJ_k|x_k-y_k|\leq\|\boldsymbol{J}\|\|\boldsymbol{x}-\boldsymbol{y}\|.
\end{align*}
Since at most $2\prod_{k=1}^d(q_k-r_k)$ elements in both $\boldsymbol{b}_{\boldsymbol{J},\boldsymbol{q}-\boldsymbol{r}}(\boldsymbol{x})$ and $\boldsymbol{b}_{\boldsymbol{J},\boldsymbol{q}-\boldsymbol{r}}(\boldsymbol{y})$ will be nonzero for any $\boldsymbol{x},\boldsymbol{y}\in[0,1]^d$, $\|\boldsymbol{b}_{\boldsymbol{J},\boldsymbol{q}-\boldsymbol{r}}(\boldsymbol{x})-\boldsymbol{b}_{\boldsymbol{J},\boldsymbol{q}-\boldsymbol{r}}(\boldsymbol{y})\|^2$ is
\begin{align*}
&\sum_{j_1=1}^{J_1-r_1}\dotsi\sum_{j_d=1}^{J_d-r_d}\left|\prod_{k=1}^dB_{j_k,q_k-r_k}(x_k)-\prod_{k=1}^dB_{j_k,q_k-r_k}(y_k)\right|^2\\
&\qquad\lesssim\left[2\prod_{k=1}^d(q_k-r_k)\right]\sum_{k=1}^dJ_k^2\|\boldsymbol{x}-\boldsymbol{y}\|^2\lesssim\sum_{k=1}^dJ_k^2\|\boldsymbol{x}-\boldsymbol{y}\|^2.\qedhere
\end{align*}
\end{proof}

\begin{lemma}\label{lem:tincrement}
Let $\boldsymbol{r}\in\mathbb{N}_0^d$ be such that $\sum_{k=1}^dr_k/\alpha_k<1$. Let $Z_{n,\boldsymbol{r}}\sim\mathrm{GP}(0,\Sigma_{\boldsymbol{r}})$ and $Q_{n,\boldsymbol{r}}$ be a sub-Gaussian process with mean function $0$ and covariance function $\sigma_0^2\Psi_{\boldsymbol{r}}$. Let $J_k\asymp n^{\alpha^{*}/\{\alpha_k(2\alpha^{*}+d)\}}$ for $k=1,\dotsc,d$. Then for any $\boldsymbol{t},\boldsymbol{s}\in[0,1]^d$, we have $\mathrm{Var}[Z_{n,\boldsymbol{r}}(\boldsymbol{t})-Z_{n,\boldsymbol{r}}(\boldsymbol{s})]\leq C \|\boldsymbol{J}\|^2 \|\boldsymbol{t}-\boldsymbol{s}\|^2$ and $\mathrm{Var}[Q_{n,\boldsymbol{r}}(\boldsymbol{t})-Q_{n,\boldsymbol{r}}(\boldsymbol{s})]\leq C\|\boldsymbol{J}\|^2 \|\boldsymbol{t}-\boldsymbol{s}\|^2$ for some constant $C>0$.
\end{lemma}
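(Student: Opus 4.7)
The plan is to exploit the explicit quadratic-form expressions for $\Sigma_{\boldsymbol{r}}$ and $\Psi_{\boldsymbol{r}}$, reduce the variance of the increment to an operator-norm bound times the squared Euclidean distance between B-spline vectors, and then invoke the Lipschitz estimate in Lemma \ref{lem:tsplinediff}. Writing $\boldsymbol{u}(\boldsymbol{x}) = \boldsymbol{b}_{\boldsymbol{J},\boldsymbol{q}-\boldsymbol{r}}(\boldsymbol{x})$ and $\boldsymbol{M} = (\boldsymbol{B}^T\boldsymbol{B}+\boldsymbol{\Omega}^{-1})^{-1}$, from \eqref{eq:tsigmar} I have
\[
\mathrm{Var}[Z_{n,\boldsymbol{r}}(\boldsymbol{t})-Z_{n,\boldsymbol{r}}(\boldsymbol{s})] = (\boldsymbol{u}(\boldsymbol{t})-\boldsymbol{u}(\boldsymbol{s}))^T \boldsymbol{W}_{\boldsymbol{r}}\boldsymbol{M}\boldsymbol{W}_{\boldsymbol{r}}^T(\boldsymbol{u}(\boldsymbol{t})-\boldsymbol{u}(\boldsymbol{s})),
\]
and this is bounded by $\|\boldsymbol{W}_{\boldsymbol{r}}\|_{(2,2)}^2\,\|\boldsymbol{M}\|_{(2,2)}\,\|\boldsymbol{u}(\boldsymbol{t})-\boldsymbol{u}(\boldsymbol{s})\|^2$.

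Next I would plug in the three estimates already established earlier in the paper: the spline-vector Lipschitz bound $\|\boldsymbol{u}(\boldsymbol{t})-\boldsymbol{u}(\boldsymbol{s})\|^2 \lesssim \|\boldsymbol{J}\|^2\|\boldsymbol{t}-\boldsymbol{s}\|^2$ from Lemma \ref{lem:tsplinediff}, the operator-norm bound $\|\boldsymbol{W}_{\boldsymbol{r}}\|_{(2,2)}^2 \lesssim \prod_{k=1}^d J_k^{2r_k}$ from \eqref{eq:tww}, and $\|\boldsymbol{M}\|_{(2,2)} \lesssim n^{-1}\prod_{k=1}^d J_k$ from \eqref{eq:teigen}. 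Multiplying these yields
\[
\mathrm{Var}[Z_{n,\boldsymbol{r}}(\boldsymbol{t})-Z_{n,\boldsymbol{r}}(\boldsymbol{s})] \lesssim n^{-1}\prod_{k=1}^d J_k^{2r_k+1}\,\|\boldsymbol{J}\|^2\,\|\boldsymbol{t}-\boldsymbol{s}\|^2.
\]
For the bound to collapse to the form $C\|\boldsymbol{J}\|^2\|\boldsymbol{t}-\boldsymbol{s}\|^2$ with constant $C$, I need the prefactor $n^{-1}\prod_{k=1}^d J_k^{2r_k+1}$ to be $O(1)$. Inserting $J_k \asymp n^{\alpha^*/\{\alpha_k(2\alpha^*+d)\}}$ gives exponent
\[
\sum_{k=1}^d \frac{\alpha^*(2r_k+1)}{\alpha_k(2\alpha^*+d)} - 1 = \frac{2\alpha^*\sum_{k=1}^d r_k/\alpha_k + d - (2\alpha^*+d)}{2\alpha^*+d} = -\frac{2\alpha^*\bigl(1-\sum_{k=1}^d r_k/\alpha_k\bigr)}{2\alpha^*+d},
\]
which is strictly negative by the hypothesis $\sum_{k=1}^d r_k/\alpha_k < 1$. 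Hence the prefactor tends to zero, and in particular is uniformly bounded, yielding the desired inequality for $Z_{n,\boldsymbol{r}}$.

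For $Q_{n,\boldsymbol{r}}$, exactly the same scheme applies to the expression for $\Psi_{\boldsymbol{r}}$: the middle matrix becomes $\boldsymbol{W}_{\boldsymbol{r}}\boldsymbol{M}\boldsymbol{B}^T\boldsymbol{B}\boldsymbol{M}\boldsymbol{W}_{\boldsymbol{r}}^T$, whose operator norm is at most $\|\boldsymbol{W}_{\boldsymbol{r}}\|_{(2,2)}^2\,\|\boldsymbol{M}\|_{(2,2)}^2\,\|\boldsymbol{B}^T\boldsymbol{B}\|_{(2,2)}$. Using \eqref{eq:tdiagonal} for the extra factor $\|\boldsymbol{B}^T\boldsymbol{B}\|_{(2,2)}\asymp n\prod_{k=1}^d J_k^{-1}$ together with \eqref{eq:tww} and \eqref{eq:teigen} gives the same prefactor $n^{-1}\prod_{k=1}^d J_k^{2r_k+1}$, and combining with Lemma \ref{lem:tsplinediff} and the same exponent computation closes the case. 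The only real issue is the bookkeeping in the exponent computation showing the prefactor is tame; once that is in hand the rest is a direct chain of matrix-norm inequalities.
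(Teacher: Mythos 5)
Your proposal is correct and follows essentially the same route as the paper's proof: bound the increment variance by $\|\boldsymbol{b}_{\boldsymbol{J},\boldsymbol{q}-\boldsymbol{r}}(\boldsymbol{t})-\boldsymbol{b}_{\boldsymbol{J},\boldsymbol{q}-\boldsymbol{r}}(\boldsymbol{s})\|^2$ times the operator norms of $\boldsymbol{W}_{\boldsymbol{r}}^T\boldsymbol{W}_{\boldsymbol{r}}$, $(\boldsymbol{B}^T\boldsymbol{B}+\boldsymbol{\Omega}^{-1})^{-1}$ (and, for $Q_{n,\boldsymbol{r}}$, also $\boldsymbol{B}^T\boldsymbol{B}$), then invoke Lemma \ref{lem:tsplinediff}, \eqref{eq:tww}, \eqref{eq:teigen} and \eqref{eq:tdiagonal}. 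The only difference is cosmetic: you make explicit the exponent computation showing $n^{-1}\prod_{k=1}^d J_k^{2r_k+1}=O(1)$ under $\sum_{k=1}^d r_k/\alpha_k<1$, which the paper leaves implicit in its final ``$\lesssim$'' step.
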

\begin{proof}
Let $J_k\asymp n^{\alpha^{*}/\{\alpha_k(2\alpha^{*}+d)\}}$ for $k=1,\dotsc,d$, then $\mathrm{Var}[Z_{n,\boldsymbol{r}}(\boldsymbol{t})-Z_{n,\boldsymbol{r}}(\boldsymbol{s})]$ is bounded above by
\begin{align*}
&\|\boldsymbol{b}_{\boldsymbol{J},\boldsymbol{q}-\boldsymbol{r}}(\boldsymbol{t})-\boldsymbol{b}_{\boldsymbol{J},\boldsymbol{q}-\boldsymbol{r}}(\boldsymbol{s})\|^2\left\|\left(\boldsymbol{B}^T\boldsymbol{B}
+\boldsymbol{\Omega}^{-1}\right)^{-1}\right\|_{(2,2)}\|\boldsymbol{W}^T_{\boldsymbol{r}}\boldsymbol{W}_{\boldsymbol{r}}\|_{(2,2)}\\
&\qquad\lesssim\frac{1}{n}\left(\prod_{k=1}^dJ_k^{2r_k+1}\right)\left(\sum_{k=1}^dJ_k^2\right)\|\boldsymbol{t}-\boldsymbol{s}\|^2
\lesssim \|\boldsymbol{J}\|^2 \|\|\boldsymbol{t}-\boldsymbol{s}\|^2,
\end{align*}
where we used Lemma \ref{lem:tsplinediff}, equations \eqref{eq:teigen} and \eqref{eq:tww} to bound the three norms respectively. Similarly, $\mathrm{Var}[Q_{n,\boldsymbol{r}}(\boldsymbol{t})-Q_{n,\boldsymbol{r}}(\boldsymbol{s})]$ is bounded by
\begin{align*}
\|\boldsymbol{b}_{\boldsymbol{J},\boldsymbol{q}-\boldsymbol{r}}(\boldsymbol{t})-\boldsymbol{b}_{\boldsymbol{J},\boldsymbol{q}-\boldsymbol{r}}(\boldsymbol{s})\|^2\left\|\left(\boldsymbol{B}^T\boldsymbol{B}
+\boldsymbol{\Omega}^{-1}\right)^{-1}\right\|_{(2,2)}^2\|\boldsymbol{B}^T\boldsymbol{B}\|_{(2,2)}\|\boldsymbol{W}^T_{\boldsymbol{r}}\boldsymbol{W}_{\boldsymbol{r}}\|_{(2,2)},
\end{align*}
which is $O(\|\boldsymbol{J}\|^2\|\boldsymbol{t}-\boldsymbol{s}\|^2)$, where we used Lemma \ref{lem:tsplinediff}, \eqref{eq:teigen}, \eqref{eq:tdiagonal} and \eqref{eq:tww} to bound the four norms respectively.
\end{proof}

\begin{lemma}\label{lem:l2boundr}
Let $f(\boldsymbol{x})=\boldsymbol{b}_{\boldsymbol{J},\boldsymbol{q}}(\boldsymbol{x})^T\boldsymbol{\theta}$ and $I_{j_1,\dotsc,j_d}=\prod_{k=1}^d[t_{k,j_k-q_k},t_{k,j_k}]$. Furthermore, let $f|_{I_{j_1,\dotsc,j_d}}$ be the restriction of $f$ onto $I_{j_1,\dotsc,j_d}$. Then there exists constant $C>0$ depending on $\boldsymbol{q}=(q_1,\dotsc,q_d)^T$ such that
\begin{equation*}
\|f|_{I_{j_1,\dotsc,j_d}}\|_\infty\leq C\prod_{k=1}^d(t_{k,j_k}-t_{k,j_k-q_k})^{-1/2}\|f|_{I_{j_1,\dotsc,j_d}}\|_2.
\end{equation*}
\end{lemma}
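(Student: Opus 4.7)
The main idea is that on $I_{j_1,\dotsc,j_d}$ the restriction $f|_{I_{j_1,\dotsc,j_d}}$ is a piecewise tensor-product polynomial. On every sub-rectangle $R_{\boldsymbol{l}}=\prod_{k=1}^d[t_{k,l_k-1},t_{k,l_k}]$ with $j_k-q_k+1\le l_k\le j_k$, the function $f$ agrees with a single tensor-product polynomial of coordinate degree at most $q_k-1$ in $x_k$, because no knot in direction $k$ falls inside $(t_{k,l_k-1},t_{k,l_k})$. My plan is therefore to prove a tensor-product Nikolskii-type inverse estimate on each sub-rectangle, take the maximum on the sup-norm side, and use the quasi-uniformity of the knots (invoked via Lemma~\ref{lem:tasymp}) to replace the local mesh widths by $t_{k,j_k}-t_{k,j_k-q_k}$.

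First I would establish the following inverse inequality: for any tensor-product polynomial $p$ of coordinate degree $\le q_k-1$ in $x_k$ on a rectangle $R=\prod_{k=1}^d[a_k,b_k]$,
\begin{equation*}
\|p\|_{\infty,R}\le K_{\boldsymbol{q}}\prod_{k=1}^d(b_k-a_k)^{-1/2}\|p\|_{2,R},
\end{equation*}
with $K_{\boldsymbol{q}}$ depending only on $\boldsymbol{q}$. The cleanest route is the affine change of variables $x_k\mapsto(x_k-a_k)/(b_k-a_k)$ onto $[0,1]^d$: the sup-norm is invariant while the $L_2$-norm acquires the factor $\prod_k(b_k-a_k)^{-1/2}$, and on the finite-dimensional space of tensor-product polynomials of those coordinate degrees on $[0,1]^d$ any two norms are equivalent with constants depending only on $\boldsymbol{q}$.

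Applying this estimate with $(a_k,b_k)=(t_{k,l_k-1},t_{k,l_k})$ to every sub-rectangle of $I_{j_1,\dotsc,j_d}$, taking the maximum in $\boldsymbol{l}$ on the left, and using $\|f|_{R_{\boldsymbol{l}}}\|_2\le\|f|_{I_{j_1,\dotsc,j_d}}\|_2$ on the right, I obtain
\begin{equation*}
\|f|_{I_{j_1,\dotsc,j_d}}\|_\infty\le K_{\boldsymbol{q}}\max_{\boldsymbol{l}}\prod_{k=1}^d(t_{k,l_k}-t_{k,l_k-1})^{-1/2}\|f|_{I_{j_1,\dotsc,j_d}}\|_2.
\end{equation*}
Quasi-uniformity then gives $\delta_{k,l}\asymp\Delta_k$ for every $l$, hence $t_{k,j_k}-t_{k,j_k-q_k}=\sum_{m=j_k-q_k+1}^{j_k}\delta_{k,m}\asymp q_k\Delta_k$, so $(t_{k,l_k}-t_{k,l_k-1})^{-1/2}\lesssim(t_{k,j_k}-t_{k,j_k-q_k})^{-1/2}$, which yields the claimed bound.

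The only genuine issue is the multivariate Bernstein step. The rescaling argument above makes the $\boldsymbol{q}$-dependence transparent; an equivalent route, if one prefers, is to iterate the univariate inverse inequality $\|r\|_{\infty,[0,1]}\le C_n\|r\|_{2,[0,1]}$ for polynomials of degree $\le n$ one coordinate at a time, exploiting that $\int p^2\,dx_1$ is again a tensor-product polynomial of controlled coordinate degree in $(x_2,\dotsc,x_d)$.
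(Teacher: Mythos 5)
Your argument is correct, but it reaches the bound by a genuinely different route than the paper. The paper does not decompose $I_{j_1,\dotsc,j_d}$ into knot cells: it writes the restriction in the monomial basis as $\sum_{l_1,\dotsc,l_d}\alpha_{l_1,\dotsc,l_d}\prod_k x_k^{l_k}$, observes that the change of basis between the monomial coefficients $\boldsymbol{\alpha}$ and the active B-spline coefficients $\boldsymbol{\gamma}$ is a fixed-size invertible linear map with $O(1)$ norms, lower-bounds $\|f|_{I_{j_1,\dotsc,j_d}}\|_2^2$ by $\prod_k(t_{k,j_k}-t_{k,j_k-q_k})\,\lambda_{\min}\{\mathrm{E}(\boldsymbol{U}\boldsymbol{U}^T)\}\|\boldsymbol{\alpha}\|^2$ with $\boldsymbol{U}$ the tensor monomial vector of a uniform random point of the support, and finishes with $\|f|_{I_{j_1,\dotsc,j_d}}\|_\infty\leq\|\boldsymbol{\gamma}\|_\infty$ from the partition-of-unity property. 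Your cell-by-cell Nikolskii inequality with an affine pullback to the reference cube is the standard inverse-estimate argument, and it is, if anything, more robust: the norm-equivalence constant on $[0,1]^d$ is manifestly independent of the location and length of the knot intervals, whereas a uniform lower bound on $\lambda_{\min}\{\mathrm{E}(\boldsymbol{U}\boldsymbol{U}^T)\}$ for unscaled monomials on a short subinterval of $[0,1]$ really requires the same rescaling that you perform explicitly. The price you pay is the last step converting $\min_{l_k}\delta_{k,l_k}$ into $t_{k,j_k}-t_{k,j_k-q_k}$, which uses quasi-uniformity (the defining ratio bound, rather than Lemma~\ref{lem:tasymp} per se), so your constant depends on the quasi-uniformity constant in addition to $\boldsymbol{q}$; since quasi-uniform knots are a standing assumption and the lemma is only invoked under it (in Lemmas~\ref{lem:l2bound} and~\ref{lem:matrix}), this is harmless.
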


\begin{proof}
By equation (12.8) of Theorem 12.2 from \citep{lschumaker},
\begin{align*}
f(\boldsymbol{x})|_{I_{j_1,\dotsc,j_d}}&=\sum_{m_1=1}^{J_1}\dotsi\sum_{m_d=1}^{J_d}\theta_{m_1,\dotsc,m_d}\prod_{k=1}^dB_{m_k,q_k}(x_k)|_{I_{j_1,\dotsc,j_d}}\\
&=\sum_{l_1=0}^{q_1-1}\dotsi\sum_{l_d=0}^{q_d-1}\alpha_{l_1,\dotsc,l_d}\prod_{k=1}^dx^{l_k}_k\quad\text{for $x_k\in[t_{k,j_k-q_k},t_{k,j_k}]$}.
\end{align*}
If $\boldsymbol{x}\in I_{j_1,\dotsc,j_d}$, then $\boldsymbol{x}\in\prod_{k=1}^d[t_{k,j_k-h_k-1},t_{k,j_k-h_k}]$ for some $h_k=0,1,\dotsc,q_k-1$. Therefore, this implies that only terms associated with coefficients $\boldsymbol{\gamma}=\{\theta_{m_1,\dotsc,m_d}:j_k-h_k\leq m_k\leq j_k-h_k+q_k-1,k=1,\dotsc,d\}$ will be nonzero. Furthermore, we define $\boldsymbol{\alpha}=\{\alpha_{l_1,\dotsc,l_d}:0\leq l_k\leq q_k-1,k=1,\dotsc,d\}$. The two equivalent representations of $f$ on $I_{j_i,\dotsc,j_d}$ above implies a one-to-one mapping between $\boldsymbol{\gamma}$ and $\boldsymbol{\alpha}$, i.e., each element of $\boldsymbol{\alpha}$ is a linear combination of elements in $\boldsymbol{\gamma}$ and vice-versa. Hence, there are matrices $\boldsymbol{T}$ and $\boldsymbol{V}$ of dimension $\prod_{k=1}^dq_k\times\prod_{k=1}^dq_k$ respectively, such that $\boldsymbol{T\gamma}=\boldsymbol{\alpha}$ and $\boldsymbol{V\alpha}=\boldsymbol{\gamma}$. Since these two linear transformations have entries and dimensions not depending on $n$, we have $\|\boldsymbol{T}\|_{(\infty,\infty)}=O(1)$ and $\|\boldsymbol{V}\|_{(\infty,\infty)}=O(1)$, with constants in $O(1)$ depending only on $\boldsymbol{q}$. Let $\boldsymbol{U}_{k,q_k}=(1,U_k,U^2_k,\dotsc,U_{k}^{q_k-1})^T$ where $U_k\sim\text{Uniform}(t_{k,j_k-q_k},t_{k,j_k}), k=1,\dotsc,d$. Therefore, $\|f|_{I_{j_1,\dotsc,j_d}}\|^2_2$ is
\begin{align*}
&\int_{I_{j_1,\dotsc,j_d}}\left(\sum_{l_1=0}^{q_1-1}\dotsi\sum_{l_d=0}^{q_d-1}\alpha_{l_1,\dotsc,l_d}\prod_{k=1}^dx_k^{l_k}\right)^2d\boldsymbol{x}\\
&=\sum_{l_1=0}^{q_1-1}\dotsi\sum_{l_d=0}^{q_d-1}\sum_{l^{'}_1=0}^{q_1-1}\dotsi\sum_{l^{'}_d=0}^{q_d-1}\alpha_{l_1,\dotsc,l_d}\alpha_{l^{'}_1,\dotsc,l^{'}_d}\prod_{k=1}^d\int_{[t_{k,j_k-q_k},t_{k,j_k}]}x_k^{l_k+l^{'}_k}dx_k\\
&\geq\prod_{k=1}^d(t_{k,j_k}-t_{k,j_k-q_k})\lambda_\text{min}\{\mathrm{E}(\boldsymbol{U}_{k,q_k}\boldsymbol{U}_{k,q_k}^T)\}\|\boldsymbol{\alpha}\|^2.
\end{align*}
Since $\mathrm{E}(\boldsymbol{U}_{k,q_k}\boldsymbol{U}_{k,q_k}^T)$ is nonsingular, its minimum eigenvalue is bounded below by a positive constant. Hence, $\lambda_\text{min}\{\mathrm{E}(\boldsymbol{U}_{k,q_k}\boldsymbol{U}_{k,q_k}^T)\}\|\boldsymbol{\alpha}\|^2\gtrsim\|\boldsymbol{\alpha}\|^2_\infty
\geq\|\boldsymbol{V}\|^{-2}_{(\infty,\infty)}\|\boldsymbol{\gamma}\|^2_\infty$. The lower bound is obtained by noting that $\|f|_{I_{j_1,\dotsc,j_d}}\|^2_\infty\leq\|\boldsymbol{\gamma}\|_\infty^2\|\sum_{m_1=1}^{J_1}\dotsi\sum_{m_d=1}^{J_d}\prod_{k=1}^dB_{m_k,q_k}(\cdot)|_{I_{j_1,\dotsc,j_d}}\|_\infty^2\leq\|\boldsymbol{\gamma}\|_\infty^2$.
\end{proof}

\begin{lemma}\label{lem:l2bound}
For $f(\boldsymbol{x})=\boldsymbol{b}_{\boldsymbol{J},\boldsymbol{q}}(\boldsymbol{x})^T\boldsymbol{\theta}$, we have
\begin{align*}
\|f\|^2_2\asymp\sum_{j_1=1}^{J_1}\dotsi\sum_{j_d=1}^{J_d}\theta_{j_1,\dotsc,j_d}^2\prod_{k=1}^d(t_{k,j_k}-t_{k,j_k-q_k}).
\end{align*}
\end{lemma}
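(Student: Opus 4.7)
The identity is the standard tensor-product B-spline stability (``de Boor'') equivalence. The plan is to establish the two inequalities separately, exploiting the local support of the basis in each case.

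For the upper bound $\|f\|_2^2\lesssim \sum_j \theta_{j_1,\dotsc,j_d}^2\prod_k (t_{k,j_k}-t_{k,j_k-q_k})$: at each point $\boldsymbol{x}\in[0,1]^d$ the support property implies that at most $\prod_{k=1}^d q_k$ of the products $\prod_k B_{j_k,q_k}(x_k)$ are nonzero. By Cauchy--Schwarz,
\[
|f(\boldsymbol{x})|^2 \;\le\; \Bigl(\prod_{k=1}^d q_k\Bigr)\,\sum_{j_1,\dotsc,j_d} \theta_{j_1,\dotsc,j_d}^2\,\prod_{k=1}^d B_{j_k,q_k}(x_k)^2.
\]
Using $B_{j_k,q_k}\le 1$ to bound $B_{j_k,q_k}^2\le B_{j_k,q_k}$, the identity $\int_0^1 B_{j_k,q_k}(x_k)\,dx_k=(t_{k,j_k}-t_{k,j_k-q_k})/q_k$ from Theorem~4.20 of \citep{lschumaker}, and Fubini on the product basis, the $\prod_k q_k$ factor cancels, producing the upper bound with constant depending only on $\boldsymbol{q}$.

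For the lower bound $\sum_j \theta_{j_1,\dotsc,j_d}^2\prod_k (t_{k,j_k}-t_{k,j_k-q_k})\lesssim \|f\|_2^2$, I would combine Lemma~\ref{lem:l2boundr} with a local dual-basis estimate. Using the tensor-product dual basis $\lambda_{j_1,\dotsc,j_d}=\prod_k \lambda_{j_k}$ with $\lambda_{j_k}B_{m,q_k}=\Ind_{\{m=j_k\}}$ (Section~4.6 of \citep{lschumaker}), whose operator norm on $L_\infty$ of the support depends only on $\boldsymbol{q}$, one obtains
\[
|\theta_{j_1,\dotsc,j_d}|=|\lambda_{j_1,\dotsc,j_d}(f)|\;\lesssim\; \|f|_{I_{j_1,\dotsc,j_d}}\|_\infty,
\]
where $I_{j_1,\dotsc,j_d}=\prod_k[t_{k,j_k-q_k},t_{k,j_k}]$. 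Lemma~\ref{lem:l2boundr} then upgrades this to $\theta_j^2\prod_k (t_{k,j_k}-t_{k,j_k-q_k})\lesssim \|f|_{I_j}\|_2^2$. Summing over $j$ and noting that each $\boldsymbol{x}\in[0,1]^d$ lies in $I_j$ for at most $\prod_k q_k$ multi-indices (since $t_{k,j_k-q_k}\le x_k\le t_{k,j_k}$ restricts each $j_k$ to at most $q_k$ values), one concludes $\sum_j \|f|_{I_j}\|_2^2\le (\prod_k q_k)\|f\|_2^2$, which finishes the proof.

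The main obstacle is the local stability estimate $|\theta_j|\lesssim \|f|_{I_j}\|_\infty$ with a \emph{knot-independent} constant: the univariate version is de Boor's classical stability theorem for B-splines, and the tensor-product version follows by taking the dual functional as a product of univariate ones acting coordinate-wise, but the bookkeeping of norms in the tensor construction deserves careful justification. Everything else is quantitative accounting using results already invoked in the appendix.
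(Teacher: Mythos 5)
Your proposal is correct and follows essentially the same route as the paper: the upper bound via a convexity/Cauchy--Schwarz argument on the locally supported probability-vector basis, and the lower bound via de Boor's local stability estimate $|\theta_{j_1,\dotsc,j_d}|\lesssim\|f|_{I_{j_1,\dotsc,j_d}}\|_\infty$ combined with Lemma~\ref{lem:l2boundr} and the bounded overlap of the supports (the paper cites equation (5) of Chapter XI of de Boor for exactly the step you flag as the main obstacle). The only cosmetic difference is that the paper uses Jensen's inequality with the partition-of-unity property where you use Cauchy--Schwarz with the count of nonzero basis functions; both yield the same bound up to constants depending only on $\boldsymbol{q}$.
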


\begin{proof}
Since $\boldsymbol{b}_{\boldsymbol{J},\boldsymbol{q}}(\boldsymbol{x})$ is a probability vector at any $\boldsymbol{x}$, we use Jensen's inequality to write
\begin{align*}
\int_{[0,1]^d}f(\boldsymbol{x})^2d\boldsymbol{x}&\leq\int_{[0,1]^d}\sum_{j_1=1}^{J_1}\dotsi\sum_{j_d=1}^{J_d}\theta_{j_1,\dotsc,j_d}^2\prod_{k=1}^dB_{j_k,q_k}(x_k)d\boldsymbol{x}\\
&\leq\sum_{j_1=1}^{J_1}\dotsi\sum_{j_d=1}^{J_d}\theta_{j_1,\dotsc,j_d}^2\int_{[0,1]^d}\prod_{k=1}^d\Ind_{(t_{k,j_k-q_k},t_{k,j_k})}(x_k)d\boldsymbol{x}\\
&=\sum_{j_1=1}^{J_1}\dotsi\sum_{j_d=1}^{J_d}\theta_{j_1,\dotsc,j_d}^2\prod_{k=1}^d(t_{k,j_k}-t_{k,j_k-q_k}).
\end{align*}
Using Lemma \ref{lem:l2boundr} and equation (5) of Chapter XI from \citep{deBoor}, $\|f\|^2_2$ is
\begin{align*}
\sum_{j_1=1}^{J_1}\dotsi\sum_{j_d=1}^{J_d}\|f|_{I_{j_1,\dotsc,j_d}}\|^2_2&\gtrsim\sum_{j_1=1}^{J_1}\dotsi\sum_{j_d=1}^{J_d}\prod_{k=1}^d(t_{k,j_k}-t_{k,j_k-q_k})\|f|_{I_{j_1,\dotsc,j_d}}\|^2_\infty\\
&\geq c\sum_{j_1=1}^{J_1}\dotsi\sum_{j_d=1}^{J_d}\theta^2_{j_1,\dotsc,j_d}\prod_{k=1}^d(t_{k,j_k}-t_{k,j_k-q_k}),
\end{align*}
where $c>0$ is a constant depending only on $\boldsymbol{q}=(q_1,\dotsc,q_d)^T$.
\end{proof}

The following is a multivariate generalization of Lemma 6.1 in \citep{localspline}.

\begin{lemma}\label{lem:matrix}
For quasi-uniform knots,   $\boldsymbol{\theta}^T\boldsymbol{B}^T\boldsymbol{B}\boldsymbol{\theta}\asymp n\left(\prod_{k=1}^dJ_k^{-1}\right)\|\boldsymbol{\theta}\|^2$ for any $\boldsymbol{\theta}\in\mathbb{R}^J$ if \eqref{assump:tbspline2} holds.
\end{lemma}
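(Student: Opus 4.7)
First I would rewrite the quadratic form as a Stieltjes integral: setting $f(\boldsymbol{x}) = \boldsymbol{b}_{\boldsymbol{J},\boldsymbol{q}}(\boldsymbol{x})^T\boldsymbol{\theta}$, one has $\boldsymbol{\theta}^T \boldsymbol{B}^T \boldsymbol{B} \boldsymbol{\theta} = \sum_{i=1}^n f(\boldsymbol{X}_i)^2 = n \int f^2 \, dG_n$. I would split $\int f^2 \, dG_n = \int f^2 \, dG + \int f^2 \, d(G_n - G)$, treat the first as the main term and the second as negligible, and match both to the target order $(\prod_{k=1}^d J_k^{-1})\|\boldsymbol{\theta}\|^2$.

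For the main term, since $G$ has a continuous positive density on the compact set $[0,1]^d$, it is bounded away from both $0$ and $\infty$, so $\int f^2\, dG\asymp \|f\|_2^2$. Lemma \ref{lem:l2bound} gives $\|f\|_2^2 \asymp \sum_{j_1,\dotsc,j_d} \theta_{j_1,\dotsc,j_d}^2 \prod_{k=1}^d (t_{k,j_k}-t_{k,j_k-q_k})$, and quasi-uniformity together with Lemma \ref{lem:tasymp} yields $t_{k,j_k}-t_{k,j_k-q_k} \asymp \Delta_k \asymp J_k^{-1}$. Hence $\int f^2\, dG \asymp (\prod_{k=1}^d J_k^{-1})\|\boldsymbol{\theta}\|^2$.

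For the error term, write $\int f^2\, d(G_n - G) = \boldsymbol{\theta}^T\boldsymbol{M}_2\boldsymbol{\theta}$ with $\boldsymbol{M}_2 = \int \boldsymbol{b}_{\boldsymbol{J},\boldsymbol{q}}\boldsymbol{b}_{\boldsymbol{J},\boldsymbol{q}}^T\, d(G_n - G)$. The same support argument that makes $\boldsymbol{B}^T\boldsymbol{B}$ be $\boldsymbol{q}$-banded shows $\boldsymbol{M}_2$ is symmetric and $\boldsymbol{q}$-banded, so each row carries only $O(1)$ nonzero entries. Each nonzero entry is a tensor-product integral $\int \prod_{k=1}^d B_{u_k,q_k}(x_k)B_{v_k,q_k}(x_k)\, d(G_n-G)(\boldsymbol{x})$, which I would bound by $d$-dimensional Riemann--Stieltjes integration by parts (equivalently, the multivariate Koksma--Hlawka inequality). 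For each factor $B_{u_k,q_k}B_{v_k,q_k}$, the $1$D total variation is $O(1)$ (its support has length $\lesssim J_k^{-1}$ while $\|B'_{j,q}\|_\infty \lesssim J_k$ by \eqref{eq:meanvalue}) and its sup is bounded by $1$, so the Hardy--Krause variation of the tensor product is $O(1)$. Combining with \eqref{assump:tbspline2}, $|(M_2)_{\boldsymbol{u},\boldsymbol{v}}| = o(\prod_k N_k^{-1}) = o(\prod_k J_k^{-1})$, whence $\|\boldsymbol{M}_2\|_{(2,2)} \leq \|\boldsymbol{M}_2\|_{(\infty,\infty)} = o(\prod_k J_k^{-1})$ and $|\boldsymbol{\theta}^T\boldsymbol{M}_2\boldsymbol{\theta}| \leq \|\boldsymbol{M}_2\|_{(2,2)}\|\boldsymbol{\theta}\|^2 = o((\prod_k J_k^{-1})\|\boldsymbol{\theta}\|^2)$.

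The main obstacle is taming the $d$-dimensional Stieltjes integral against $G_n-G$ in the error term: the key is to decompose the quadratic form into the banded Gram matrix of pairwise B-spline products, so that only $O(1)$ entries contribute per row, and to exploit the tensor-product structure so that each entry is controlled via the multivariate integration-by-parts identity. Adding the main estimate to the error and multiplying by $n$ yields the two-sided bound in the claim.
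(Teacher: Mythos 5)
Your decomposition $\boldsymbol{\theta}^T\boldsymbol{B}^T\boldsymbol{B}\boldsymbol{\theta}=n\int f^2\,dG_n=n\int f^2\,dG+n\int f^2\,d(G_n-G)$ and your treatment of the main term (density of $G$ bounded away from $0$ and $\infty$, then Lemma \ref{lem:l2bound} plus $t_{k,j_k}-t_{k,j_k-q_k}\asymp\Delta_k\asymp J_k^{-1}$) coincide exactly with the paper's proof. Where you diverge is the error term. The paper keeps $f^2$ intact, applies multivariate integration by parts to get the bound $2\sup|G_n-G|\int|f\,\partial^d f/\partial x_1\dotsm\partial x_d|$, then uses Cauchy--Schwarz and a second application of Lemma \ref{lem:l2bound} to the mixed partial derivative $\partial^d f/\partial x_1\dotsm\partial x_d$ (written as a lower-order spline with finite-differenced coefficients via \eqref{eq:derivspline}, together with the bound $\sum(\mathfrak{d}\theta_{j_1,\dotsc,j_d})^2\leq 2^{2d}\|\boldsymbol{\theta}\|^2$) to conclude the error is $o(\prod_kN_k^{-1})\|\boldsymbol{\theta}\|^2$. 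You instead push the perturbation into the Gram matrix $\boldsymbol{M}_2=\int\boldsymbol{b}_{\boldsymbol{J},\boldsymbol{q}}\boldsymbol{b}_{\boldsymbol{J},\boldsymbol{q}}^T\,d(G_n-G)$, exploit its $\boldsymbol{q}$-bandedness so that each row has $O(1)$ nonzero entries, and control each entry by Koksma--Hlawka, using that the Hardy--Krause variation of $\prod_kB_{u_k,q_k}B_{v_k,q_k}$ is $O(1)$ (support of length $\lesssim J_k^{-1}$ against derivative of size $\lesssim J_k$ in each coordinate, by \eqref{eq:meanvalue}); the bound $\|\boldsymbol{M}_2\|_{(2,2)}\leq\|\boldsymbol{M}_2\|_{(\infty,\infty)}=o(\prod_kJ_k^{-1})$ for the symmetric banded matrix then closes the argument. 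Both routes are valid and yield the same $o((\prod_kJ_k^{-1})\|\boldsymbol{\theta}\|^2)$ remainder. Your entrywise approach has the advantage of invoking a standard, cleanly citable inequality and of making the variation computation trivial because each entry is a genuine tensor product of univariate functions; it also sidesteps the paper's somewhat delicate global integration-by-parts identity for $f^2$ (whose mixed derivative involves cross terms and boundary faces that the paper treats tersely). The paper's route avoids introducing the Hardy--Krause machinery and reuses Lemma \ref{lem:l2bound} instead. I find no gap in your argument.
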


\begin{proof}
Let $f(\boldsymbol{x})=\boldsymbol{b}_{\boldsymbol{J},\boldsymbol{q}}(\boldsymbol{x})^T\boldsymbol{\theta}$ and
$\|f\|^2_{2,\nu}=\int_{[0,1]^d}f(\boldsymbol{x})^2d\nu$ for any sigma-finite measure $\nu$. Observe that $\|f\|^2_{2,G_n}=\boldsymbol{\theta}^T\boldsymbol{B}^T\boldsymbol{B\theta}/n$. If the density of $G$ lies between $K_{\min}$ and $K_{\max}$, then by the quasi-uniformity of the knots and Lemma \ref{lem:l2bound}, the upper bound for $\|f\|_{2,G}^2$ is
\begin{align}
\|f\|_{2,G}^2&\leq K_\text{max}\sum_{j_1=1}^{J_1}\dotsi\sum_{j_d=1}^{J_d}\theta_{j_1,\dotsc,j_d}^2\prod_{k=1}^d(t_{k,j_k}-t_{k,j_k-q_k})\lesssim
\|\boldsymbol{\theta}\|^2\prod_{k=1}^d\Delta_k,\label{eq:1stpart}
\end{align}
and for a constant $c>0$, the lower bound for $\|f\|_{2,G}^2$ is
\begin{align}
c^2K_\text{min}\sum_{j_1=1}^{J_1}\dotsi\sum_{j_d=1}^{J_d}\theta_{j_1,\dotsc,j_d}^2\prod_{k=1}^d(t_{k,j_k}-t_{k,j_k-q_k})\gtrsim \|\boldsymbol{\theta}\|^2\prod_{k=1}^d\Delta_k.\label{eq:2ndpart}
\end{align}
Noting that $(G_n-G)(\boldsymbol{1}_d)=(G_n-G)(\boldsymbol{0})=0$, we use multivariate integration by parts and \eqref{assump:tbspline2} to bound $|\int_{[0,1]^d}f(\boldsymbol{x})^2d(G_n-G)(\boldsymbol{x})|$ by
\begin{align}\label{eq:mitp}
&2\sup_{\boldsymbol{x}\in[0,1]^d}|G_n(\boldsymbol{x})-G(\boldsymbol{x})|\int_{[0,1]^d}\left|f(\boldsymbol{x})\frac{\partial^d f(\boldsymbol{x})}{\partial x_1\dotsm\partial x_d}\right|d\boldsymbol{x}\nonumber\\
&\qquad=o\left(\prod_{k=1}^dN_k^{-1}\right)\|f\|_2\left\|\frac{\partial^d f}{\partial x_1\dotsm\partial x_d}\right\|_2,
\end{align}
in view of the Cauchy-Schwarz inequality in the last line. From \eqref{eq:firstdiff}, we have that $\mathfrak{D}^{1}_{j_k}\theta_{j_1,\dotsc,j_k}=(q_k-1)\mathfrak{d}_{j_k}\theta_{j_1,\dotsc,j_d}(t_{k,j_k}-t_{k,j_k-q_k+1})^{-1}$, where $\mathfrak{d}_{j_k}\theta_{j_1,\dotsc,j_d}=\theta_{j_1,\dotsc,j_{k-1},j_k+1,j_{k+1},\dotsc,j_d}-\theta_{j_1,\dotsc,j_{k-1},j_k,j_{k+1},\dotsc,j_d}$. Let  $\mathfrak{d}\theta_{j_1,\dotsc,j_d}=\mathfrak{d}_{j_1}\dotsm\mathfrak{d}_{j_d}\theta_{j_1,\dotsc,j_d}$. By setting $\boldsymbol{r}=\boldsymbol{1}_d$ in \eqref{eq:derivspline},
\begin{align*}
\frac{\partial^d f(\boldsymbol{x})}{\partial x_1\dotsm\partial x_d}&=\sum_{j_1=1}^{J_1-1}\dotsi\sum_{j_d=1}^{J_d-1}\mathfrak{d}\theta_{j_1,\dotsc,j_d}\prod_{k=1}^d\frac{q_k-1}{t_{k,j_k}-t_{k,j_k-q_k+1}}B_{j_k,q_k-1}(x_k).
\end{align*}
Applying Lemma \ref{lem:l2bound} to $f$ and its derivatives,
\begin{align*}
\|f\|_2^2\leq\sum_{j_1=1}^{J_1}\dotsi\sum_{j_d=1}^{J_d}\theta_{j_1,\dotsc,j_d}^2\prod_{k=1}^d(t_{k,j_k}-t_{k,j_k-q_k})\lesssim\|\boldsymbol{\theta}\|^2\prod_{k=1}^d\Delta_k,
\end{align*}
and $\|\partial^d f/\partial x_1\dotsm\partial x_d\|_2^2$ is bounded by
\begin{align*}
\sum_{j_1=1}^{J_1-1}\dotsi\sum_{j_d=1}^{J_d-1}\left(\mathfrak{d}\theta_{j_1,\dotsc,j_d}\right)^2\prod_{k=1}^d\frac{(q_k-1)^2}{t_{k,j_k}-t_{k,j_k-q_k+1}}
\lesssim\|\boldsymbol{\theta}\|^2\prod_{k=1}^d\frac{1}{\min_{1\leq l\leq N_k}\delta_{k,l}},
\end{align*}
where the last inequality follows from $\sum_{j_1=1}^{J_1-1}\dotsi\sum_{j_d=1}^{J_d-1}(\mathfrak{d}\theta_{j_1,\dotsc,j_d})^2\leq2^{2d}\|\boldsymbol{\theta}\|^2$. By the quasi-uniformity of the knots, it follows that the right side of \eqref{eq:mitp} is $o(\prod_{k=1}^dN_k^{-1})\|\boldsymbol{\theta}\|^2$. Combining this result with \eqref{eq:1stpart} and using Lemma \ref{lem:tasymp},
\begin{align*}
\|f\|^2_{2,G_n}&\lesssim\|\boldsymbol{\theta}\|^2\prod_{k=1}^d\Delta_k+o\left(\prod_{k=1}^dN_k^{-1}\right)\|\boldsymbol{\theta}\|^2
\lesssim\left(\prod_{k=1}^dJ_k^{-1}\right)\|\boldsymbol{\theta}\|^2;
\end{align*}
while combining the same result with \eqref{eq:2ndpart} and in view of Lemma \ref{lem:tasymp},
\begin{align*}
\|f\|^2_{2,G_n}&\gtrsim
\|\boldsymbol{\theta}\|^2\prod_{k=1}^d\Delta_k-o\left(\prod_{k=1}^dN_k^{-1}\right)\|\boldsymbol{\theta}\|^2\gtrsim\left(\prod_{k=1}^dJ_k^{-1}\right)\|\boldsymbol{\theta}\|^2.\qedhere
\end{align*}
\end{proof}

\begin{lemma}\label{lem:tvarquad}
Let $\boldsymbol{A}$ be an $n\times n$ symmetric positive definite matrix. Assume that $\|\boldsymbol{A}\|_{(2,2)}\leq C$ for constant $C>0$. Let $\boldsymbol{\varepsilon}=(\varepsilon_1,\dotsc,\varepsilon_n)^T$ such that $\varepsilon_i$ are i.i.d. mean $\mathrm{0}$, variance $\sigma_0^2$ with finite fourth moment for $i=1,\dotsc,n$. Then $\mathrm{Var}(\boldsymbol{\varepsilon}^T\boldsymbol{A\varepsilon})=O(n)$.
\end{lemma}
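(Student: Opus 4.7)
The plan is to invoke the classical identity for the variance of a quadratic form in independent, mean-zero random variables and then bound each resulting trace term using the spectral norm hypothesis $\|\boldsymbol{A}\|_{(2,2)}\le C$.

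First, let $\mu_4 = \mathrm{E}(\varepsilon_1^4) < \infty$ and $\sigma_0^2 = \mathrm{E}(\varepsilon_1^2)$. For a symmetric matrix $\boldsymbol{A} = (\!(a_{ij})\!)$ and i.i.d.\ mean-zero $\varepsilon_i$ with variance $\sigma_0^2$ and fourth moment $\mu_4$, a direct expansion of $\mathrm{Var}(\boldsymbol{\varepsilon}^T \boldsymbol{A}\boldsymbol{\varepsilon})=\mathrm{E}(\boldsymbol{\varepsilon}^T \boldsymbol{A}\boldsymbol{\varepsilon})^2 - (\mathrm{E}\boldsymbol{\varepsilon}^T \boldsymbol{A}\boldsymbol{\varepsilon})^2$ via $\mathrm{E}(\varepsilon_i\varepsilon_j\varepsilon_k\varepsilon_l)$, grouping terms by index-coincidence patterns, yields the well-known identity
\begin{equation*}
\mathrm{Var}(\boldsymbol{\varepsilon}^T \boldsymbol{A}\boldsymbol{\varepsilon})
= (\mu_4 - 3\sigma_0^4)\sum_{i=1}^n a_{ii}^2 + 2\sigma_0^4\,\mathrm{tr}(\boldsymbol{A}^2).
\end{equation*}
I would simply cite this as a standard formula (it is, e.g., the same identity used implicitly via equation (33) of \citep{expectquad} in the proof of Proposition~\ref{th:tsigma2con}).

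Next, both terms are $O(n)$ under the spectral bound. Since $\boldsymbol{A}$ is symmetric positive definite with eigenvalues in $[0,C]$, we have $\mathrm{tr}(\boldsymbol{A}^2) = \sum_{i=1}^n \lambda_i(\boldsymbol{A})^2 \le n\,\|\boldsymbol{A}\|_{(2,2)}^2 \le nC^2$. For the diagonal sum, using $a_{ii}^2 \le \|\boldsymbol{A}\|_{(2,2)}^2 \le C^2$ (which is the relation $|a_{ij}|\le\|\boldsymbol{A}\|_{(2,2)}$ recorded in Section~\ref{sec:notation}), one immediately obtains $\sum_{i=1}^n a_{ii}^2 \le nC^2$; alternatively, one can note $\sum_i a_{ii}^2 \le \mathrm{tr}(\boldsymbol{A}^2)$ since $\boldsymbol{A}^2$ is positive semidefinite and its diagonal entries are $\sum_j a_{ij}^2 \ge a_{ii}^2$.

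Combining these two bounds gives $\mathrm{Var}(\boldsymbol{\varepsilon}^T \boldsymbol{A}\boldsymbol{\varepsilon}) \le \bigl(|\mu_4-3\sigma_0^4| + 2\sigma_0^4\bigr)C^2\, n = O(n)$, which is the claim. There is really no obstacle in this proof; the only point requiring any care is recalling (or briefly deriving) the quadratic-form variance identity and recognizing that the spectral-norm bound controls both $\mathrm{tr}(\boldsymbol{A}^2)$ and the diagonal sum $\sum_i a_{ii}^2$ by $nC^2$.
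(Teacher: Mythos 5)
Your proof is correct, but it takes a genuinely different route from the paper's. You invoke the closed-form identity $\mathrm{Var}(\boldsymbol{\varepsilon}^T\boldsymbol{A}\boldsymbol{\varepsilon})=(\mu_4-3\sigma_0^4)\sum_i a_{ii}^2+2\sigma_0^4\,\mathrm{tr}(\boldsymbol{A}^2)$ in the original coordinates and then bound both terms by $nC^2$ via $|a_{ii}|\le\|\boldsymbol{A}\|_{(2,2)}$ and $\mathrm{tr}(\boldsymbol{A}^2)=\sum_i\lambda_i^2\le n\|\boldsymbol{A}\|_{(2,2)}^2$; I checked the identity (expanding $\mathrm{E}(\varepsilon_i\varepsilon_j\varepsilon_k\varepsilon_l)$ over the index-coincidence patterns for symmetric $\boldsymbol{A}$) and your bounds, and everything is sound, including your use of $|\mu_4-3\sigma_0^4|$ to handle the possibly negative leading coefficient. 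The paper instead diagonalizes $\boldsymbol{A}=\boldsymbol{P}^T\boldsymbol{\Lambda}\boldsymbol{P}$, sets $\boldsymbol{Z}=\boldsymbol{P}\boldsymbol{\varepsilon}$, and expands $\sum_i\lambda_i^2\mathrm{Var}(Z_i^2)+\sum_{r\ne s}\lambda_r\lambda_s\mathrm{Cov}(Z_r^2,Z_s^2)$; because the rotated coordinates $Z_i$ are uncorrelated but not independent for non-Gaussian errors, the paper must compute $\mathrm{E}(Z_r^2Z_s^2)$ explicitly in terms of the entries $p_{ij}$ of $\boldsymbol{P}$ and exploit orthonormality to recover the $O(n)$ bound. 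Your approach is shorter and sidesteps that bookkeeping entirely; its only cost is that you must either cite or rederive the quadratic-form variance formula, whereas the paper's argument is self-contained from first principles. Both yield the same conclusion under the same hypotheses.
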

\begin{proof}
By eigendecomposition, $\boldsymbol{A}=\boldsymbol{P}^T\boldsymbol{\Lambda P}$ where $\boldsymbol{\Lambda}=\mathrm{diag}(\lambda_1,\dotsc,\lambda_n)$ and $\boldsymbol{P}=(\!(p_{ij})\!)$ is an orthogonal matrix. Let  $\boldsymbol{Z}=(Z_1,\dotsc,Z_n)^T=\boldsymbol{P\varepsilon}$. Then $\mathrm{Var}(\boldsymbol{\varepsilon}^T\boldsymbol{A\varepsilon})=
\sum_{i=1}^n\lambda_i^2\mathrm{Var}(Z_i^2)
+\sum_{r\neq s}^n\lambda_r\lambda_s\mathrm{Cov}(Z_r^2,Z_s^2)$, and  $$\mathrm{E}(Z_r^2Z_s^2)=
\mathrm{E}(\varepsilon_1^4)\sum_{j=1}^np_{rj}^2p_{sj}^2+\sigma_0^4\sum_{j_1\neq j_2}^np_{rj_1}^2p_{sj_2}^2
+2\sigma_0^4\sum_{j_1\neq j_2}^np_{rj_1}p_{sj_1}p_{rj_2}p_{sj_2}.
$$
Therefore $\sum_{i=1}^n\lambda_i^2\mathrm{Var}(Z_i^2)\lesssim\sum_{i=1}^n\mathrm{E}(Z_i^4)\lesssim\sum_{i=1}^n(\sum_{j=1}^np_{ij}^2)^2\lesssim n$
since $\lambda_i\leq C,i=1,\dotsc,n$, and each row of $\boldsymbol{P}$ has unit norm. By the orthonormality of $\boldsymbol{P}$, $\mathrm{Var}(\boldsymbol{Z})=\sigma_0^2\boldsymbol{I}_n$ and $\mathrm{E}(Z_i^2)=\sigma_0^2$. Observing that for $r\neq s$,
$\sum_{j=1}^np_{rj}p_{sj}=0$, $\sum_{j_1\neq j_2}^np_{rj_1}p_{sj_1}p_{rj_2}p_{sj_2}=(\sum_{j=1}^np_{rj}p_{sj})^2-\sum_{j=1}^np_{rj}^2p_{sj}^2\leq0$ and $ \sum_{j_1,j_2=1}^n p_{rj_1}^2p_{rj_2}^2=\sum_{j=1}^np_{rj}^4+\sum_{j_1\neq j_2}^np_{rj_1}^2p_{rj_2}^2=1$. Hence using the last display $\mathrm{Cov}(Z_r^2,Z_s^2)$ is bounded by
\begin{align*}
\mathrm{E}(\varepsilon_1^4)\sum_{j=1}^np_{sj}^2p_{rj}^2
+\sigma_0^4\left(1-\sum_{j=1}^np_{rj}^4\right)-\mathrm{E}(Z_r^2)\mathrm{E}(Z_s^2)\leq\mathrm{E}(\varepsilon_1^4)\sum_{j=1}^np_{sj}^2p_{rj}^2.
\end{align*}
Therefore, $\sum_{r\neq s}^n\lambda_r\lambda_s\mathrm{Cov}(Z_r^2,Z_s^2)\lesssim\mathrm{E}(\varepsilon_1^4)\sum_{j=1}^n\sum_{r=1}^n\sum_{s=1}^np_{sj}^2p_{rj}^2\lesssim n$.
\end{proof}

\begin{lemma}\label{lem:supexpect}
Let $\{X(\boldsymbol{t}):\boldsymbol{t}\in[0,1]^d\}$ be a sub-Gaussian process with respect to the semi-metric $d(\boldsymbol{t},\boldsymbol{s})=\sqrt{\mathrm{Var}[X(\boldsymbol{t})-X(\boldsymbol{s})]}$ such that $d^2(\boldsymbol{t},\boldsymbol{s})\lesssim C(n)\|\boldsymbol{t}-\boldsymbol{s}\|^2$ for any $\boldsymbol{t},\boldsymbol{s}\in[0,1]^d$, where $C(n)$ is a polynomial in $n$. Choose points $\boldsymbol{u}=\{\boldsymbol{u}_1,\dotsc,\boldsymbol{u}_{T_n}\}$ in $[0,1]^d$ such that $\bigcup_{i=1}^{T_n}\{\boldsymbol{z}:\|\boldsymbol{z}-\boldsymbol{u}_i\|\leq\delta_n\}\supseteq[0,1]^d$, for some sequence $\delta_n\rightarrow0$ as $n\rightarrow\infty$ with $\delta_n<1$, and $T_n\le (2/\delta_n)^d$. Then for $1\leq p<\infty$,
\begin{align*}
\mathrm{E}\|X\|^p_\infty&\lesssim\{\log{(1/\delta_n)}\}^{p/2}\left\{\left(\delta_n\sqrt{C(n)}\right)^p\right.\\
&\qquad\left.+\max_{1\leq i\leq T_n}|\mathrm{E}[X(\boldsymbol{u}_i)]|^p+\max_{1\leq i\leq T_n}\{\mathrm{Var}[X(\boldsymbol{u}_i)]\}^{p/2}\right\}.
\end{align*}
\end{lemma}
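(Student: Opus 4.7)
The plan is to decompose the supremum into a pointwise maximum at the covering centers plus a local oscillation term, and to bound each piece separately using sub-Gaussian tail estimates together with an entropy bound.

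First I would use the covering assumption to write
\[
\|X\|_\infty \le \max_{1\le i\le T_n}|X(\boldsymbol{u}_i)| + \max_{1\le i\le T_n} \sup_{\boldsymbol{t}:\|\boldsymbol{t}-\boldsymbol{u}_i\|\le\delta_n}|X(\boldsymbol{t})-X(\boldsymbol{u}_i)|.
\]
The elementary inequality $(a+b)^p \le 2^{p-1}(a^p+b^p)$ reduces matters to controlling the $p$-th moments of the two terms separately. For the first term, I would further split $X(\boldsymbol{u}_i)$ into its mean and centered part, so that the maximum is bounded by $\max_i |\mathrm{E} X(\boldsymbol{u}_i)|$ plus the maximum of $T_n$ centered sub-Gaussian variables. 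A standard union-bound argument based on $\mathrm{P}(|Y|>u)\le 2\exp(-u^2/(2\sigma^2))$ and integration of tails gives
\[
\mathrm{E}\max_{1\le i\le T_n}|X(\boldsymbol{u}_i)-\mathrm{E} X(\boldsymbol{u}_i)|^p \lesssim (\log T_n)^{p/2}\max_i \{\mathrm{Var}[X(\boldsymbol{u}_i)]\}^{p/2}.
\]

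Next, for the oscillation term, each ball $B(\boldsymbol{u}_i,\delta_n)$ has semi-metric diameter at most $2\delta_n\sqrt{C(n)}$ by hypothesis, and its covering numbers in $d$ satisfy
\[
N(\epsilon, B(\boldsymbol{u}_i,\delta_n), d) \le N(\epsilon/\sqrt{C(n)}, B(\boldsymbol{u}_i,\delta_n), \|\cdot\|) \lesssim \bigl(\delta_n\sqrt{C(n)}/\epsilon\bigr)^d.
\]
Dudley's entropy bound for sub-Gaussian processes then yields $\mathrm{E}\sup_{\boldsymbol{t}\in B(\boldsymbol{u}_i,\delta_n)}|X(\boldsymbol{t})-X(\boldsymbol{u}_i)| \lesssim \delta_n\sqrt{C(n)}$, since the resulting entropy integral, after the change of variable $u=\epsilon/(\delta_n\sqrt{C(n)})$, is a finite constant. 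To pass to higher moments and to the maximum over the $T_n$ balls I would invoke a Borell-TIS type concentration of each local supremum around its mean, with sub-Gaussian parameter of order $\delta_n\sqrt{C(n)}$; the same union-bound-plus-integration argument as above then produces
\[
\mathrm{E}\max_{1\le i\le T_n}\sup_{\boldsymbol{t}\in B(\boldsymbol{u}_i,\delta_n)}|X(\boldsymbol{t})-X(\boldsymbol{u}_i)|^p \lesssim (\log T_n)^{p/2}\bigl(\delta_n\sqrt{C(n)}\bigr)^p.
\]
Combining the two bounds and using $T_n\le (2/\delta_n)^d$, whence $\log T_n \lesssim \log(1/\delta_n)$, delivers the stated inequality.

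The main obstacle is controlling the $p$-th moment of the local oscillation supremum rather than merely its expectation: Dudley's integral on its own only gives an $L^1$-type bound, so to recover the extra $(\log T_n)^{p/2}$ factor when maximizing across the $T_n$ balls one needs exponential tail control for each local supremum, obtained either by a generic chaining argument that tracks exponential tails at every scale or by a concentration-of-measure inequality for suprema of sub-Gaussian processes.
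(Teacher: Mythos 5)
Your decomposition and both bounds match the paper's argument, which is stated as a three-line citation to van der Vaart and Wellner: their Lemma 2.2.2 handles the maximum over the $T_n$ covering centers (your union-bound-plus-tail-integration step, yielding the $\psi_2^{-1}(T_n)\asymp\sqrt{\log T_n}\lesssim\sqrt{\log(1/\delta_n)}$ factor), and their Corollary 2.2.8 handles the local oscillation by bounding its $\psi_2$-Orlicz norm through the entropy integral, which packages precisely the exponential tail control you correctly identify as the main obstacle to passing from an $L^1$ bound to $p$-th moments of the maximum over balls. The one caution concerns your suggested alternative of a Borell--TIS concentration inequality: that result rests on Gaussian isoperimetry and is not available for a general sub-Gaussian process, and the lemma is applied in the paper to $Q_{n,\boldsymbol{r}}$, which is only sub-Gaussian under $P_0$; so you must take the chaining route that tracks $\psi_2$ norms at every scale. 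Since you propose that route as well, your argument goes through and is essentially the paper's, written out in full.
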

\begin{proof}
It suffices to bound the $L_p$-norms of the expected process increment and the maximum of the process at $\boldsymbol{u}$. Since $X(\boldsymbol{t})$ is sub-Gaussian and $\mathrm{Var}[X(\boldsymbol{t})-X(\boldsymbol{s})]\lesssim C(n)\|\boldsymbol{t}-\boldsymbol{s}\|^2$ by assumption, we can relate the $\psi_2$-Orlicz norm of the process increment with $\|\boldsymbol{t}-\boldsymbol{s}\|$ by Section 2.2.1 of \citep{empirical}. We then bound the expected process increment by Corollary 2.2.8 of \citep{empirical} with $d(\boldsymbol{t},\boldsymbol{s})=\|\boldsymbol{t}-\boldsymbol{s}\|$. The expected maximum of the process at $\boldsymbol{u}$ is then bounded using Lemma 2.2.2 of \citep{empirical}.
\end{proof}

{\bf Acknowledgment.} The authors like to thank the referees and the associate editors for suggestions which led to significant improvements of the paper and for directing to several relevant references.

\bibliographystyle{imsart-number}
\bibliography{reference}
\end{document}